\documentclass[a4paper]{article}
\usepackage{amssymb, latexsym, amsmath, amsthm}
\usepackage{a4wide}

\usepackage{color}

\newtheorem*{thm1*}{Theorem A}
\newtheorem*{thm2*}{Theorem B}

\newtheorem{theorem}{Theorem}[section]
\newtheorem{lemma}[theorem]{Lemma}
\newtheorem{proposition}[theorem]{Proposition}
\newtheorem{corollary}[theorem]{Corollary}

\theoremstyle{definition}

\newtheorem{remark}[theorem]{Remark}
\newtheorem{definition}[theorem]{Definition}

\newcommand{\K}{\mathbb{K}}

\newcommand{\Z}{\mathcal{Z}}
\newcommand{\N}{\mathbb{N}}

\begin{document}

\title{Classification of factorial generalized down-up algebras}
\author{St\'ephane Launois\thanks{I am grateful for the full financial support of EPSRC first grant \textit{EP/I018549/1}.} \ and Samuel A. Lopes\thanks{Research funded by the European Regional Development Fund through the programme COMPETE and by the Portuguese Government through the FCT -- Funda‹o para a Cincia e a Tecnologia under the project \textit{PEst-C/MAT/UI0144/2011}.}}
\date{}
\maketitle

\begin{abstract}
We determine when a generalized down-up algebra is a Noetherian unique factorisation domain or a Noetherian unique factorisation ring.
\end{abstract}

\noindent {\it Keywords:} generalized down-up algebra; Noetherian unique factorisation domain; Noetherian unique factorisation ring.
\\$ $
\\{\it 2010 Mathematics Subject Classification:} 16U30; 16S30.

\section*{Introduction}\label{S:int}

Down-up algebras were introduced by Benkart and Roby in~\cite{BR98}, motivated by the study of certain ``down'' and ``up'' operators on posets. In this seminal paper, the highest weight theory for a down-up algebra was developed and a parallel was drawn between down-up algebras and enveloping algebras of Lie algebras, based on the apparent similarity between their respective representation theories and  structural properties. Later, in~\cite{CS04}, Cassidy and Shelton introduced a larger class of algebras which, when defined over an algebraically closed field, contains all down-up algebras. 

Let $\K$ be an algebraically closed  field of characteristic $0$. Fix $r, s, \gamma\in\K$ and $f\in\K[x]$. The {\it generalized down-up algebra} $L=L(f, r, s, \gamma)$ is the unital associative $\K$-algebra generated by $d$, $u$ and $h$, subject to the relations:
$$
[d, h]_{r}+\gamma d=0, \quad [h, u]_{r}+\gamma u=0 \quad \mbox{and} \quad [d, u]_{s}+f(h)=0,
$$
where $[a, b]_{\lambda}:=ab-\lambda ba$. A down-up algebra can be seen as a generalized down-up algebra, as above, with $\deg (f)=1$. 

Noteworthy examples of generalized down-up algebras are the enveloping algebra  of the semisimple Lie algebra $\mathfrak{sl}_{2}$, of traceless matrices of size $2$, which is isomorphic to $L(x, 1, 1, 1)$, and the enveloping algebra  of the $3$-dimensional Heisenberg Lie algebra $\mathfrak{h}$, which is isomorphic to $L(x, 1, 1, 0)$. Another example is the quantum Heisenberg Lie algebra $U_{q}(\mathfrak{sl}^{+}_{3})$, where $q\in\K^{*}$, which can be seen as $L(x, q, q^{-1}, 0)$. Under a mild restriction on the parameters, the algebra of regular functions on quantum affine $3$-space, $\mathcal{O}_{Q}(\K^{*})$, is a generalized down-up algebra of the form $L(0, r, s, 0)$, with $rs\neq 0$. In~\cite{spS90}, Smith defined a class of algebras \textit{similar to the enveloping algebra of $\mathfrak{sl}_{2}$}. Subsequently, Rueda considered in~\cite{sR02} a larger family of algebras, including Smith's algebras. The algebras in Rueda's family are generalized down-up algebras of the form $L(f, 1, s, 1)$, and by setting $s=1$ we retrieve Smith's algebras. Other examples of generalized down-up algebras can be found in~\cite[Secs.\ 5 and 6]{BJ01}.


Like down-up algebras, generalized down-up algebras display several features of the structure and representation theory of a semisimple Lie algebra, but their defining parameters allow enough freedom to obtain a variety of different behaviours. An example of this is the global dimension of a generalized down-up algebra, which can be $1$, $2$ or $3$, by \cite[Thm.\ 3.1]{CS04} (for a down-up algebra, the global dimension is always $3$). Similarly, in some cases the centre is reduced to the scalars, but in others it can be large, and there are cases in which the generalized down-up algebra is finite over its centre. Other examples of properties which hold in some generalized down-up algebras and do not in others are: being Noetherian, being primitive, having  all finite-dimensional modules semisimple, and having a Hopf algebra structure. 

Generalized down-up algebras have been studied mostly from the point of view of representation theory (see~\cite{BR98}, \cite{CM00}, \cite{rK01}, \cite{jH04}, \cite{CS04} and \cite{iP07}); their primitive ideals have been determined in~\cite{dJ00}, \cite{iP04}, \cite{PM05}, \cite{iP09} and \cite{iPpre}. In this paper we study generalized down-up algebras from the point of view of noncommutative algebraic geometry, namely, we provide a complete classification of those generalized down-up algebras which are (noncommutative) Noetherian unique factorisation rings (resp.\ domains), as defined by Chatters and Jordan in~\cite{C84} and~\cite{CJ86}.




An element $p$ of a Noetherian domain $R$ is {\em normal} if $pR=Rp$. In our case, a Noetherian
domain $R$ is said to be a {\em unique factorisation ring}, Noetherian UFR for
short, if $R$ has at least one height one prime ideal, and every height one
prime ideal is generated by a normal element. If, in addition, every height one prime factor of $R$ is a domain, then $R$ is called a {\em unique factorisation domain}, Noetherian UFD for short. As well as the usual commutative
Noetherian UFDs, examples of Noetherian UFDs include certain group algebras of polycyclic-by-finite groups~\cite{kB85} 
and various quantum algebras~\cite{LLR06,LL07} such as quantised coordinate rings of semisimple groups. Unfortunately, the notion of a Noetherian UFD is not closed  
under polynomial extensions. To the opposite, the notion of a Noetherian UFR is closed under polynomial extensions. Moreover, Chatters and Jordan proved 
general results for skew polynomial extensions of the type $R[x;\sigma]$ and $R[x;\delta]$. The general case of skew
polynomial extensions of type $R[x;\sigma, \delta]$ is more intricate and only partial results have been obtained  for a class of ``quantum'' algebras called {\em CGL extensions}~\cite{LLR06}, which includes  (generic) quantum matrices, positive parts of quantum enveloping algebras of semisimple Lie algebras, etc.

Going back to enveloping algebras, it follows from results of Conze in~\cite{nC74} that, over the complex numbers, the universal enveloping algebra of a finite-dimensional semisimple Lie algebra is a Noetherian UFD, and an analogous result holds for a finite-dimensional solvable Lie algebra, by~\cite{C84}. It is thus natural to investigate the factorial properties of generalized down-up algebras. Moreover, by considering cases in which the parameters $r$ and $s$ are roots of unity, we hope to get some insight into the behaviour of enveloping algebras over fields of finite characteristic (see~\cite{aB07} and references therein). Indeed, our analysis yields the following result, which shows that, for generalized down-up algebras, the distinction between a Noetherian UFR and a Noetherian UFD depends only on the existence of torsion in the multiplicative subgroup of $\K^{*}$ generated by $r$ and $s$.

\begin{thm1*}\label{mainUFD}
Let $L=L(f, r, s, \gamma)$ be a generalized down-up algebra with $rs \neq 0$. Then $L$ is a Noetherian UFD if and only if $L$ is a Noetherian UFR and $\langle r, s \rangle$ is torsionfree.
\end{thm1*}

Noetherian generalized down-up algebras can be viewed as iterated skew polynomial rings as well as generalized Weyl algebras (see~\cite{KMP99} and~\cite{CS04}). They also can be described as ambiskew polynomial rings (see~\cite{dJ00}). In his paper~\cite{dJ93}, Jordan determined the height one prime ideals of ambiskew polynomial rings under two additional conditions:
\begin{itemize}
\item \textit{conformality}; recall that $f$ is \textit{conformal} in $L(f, r, s, \gamma)$ if there exists $g\in\K[h]$ such that $f(h)=sg(h)-g(rh-\gamma)$;
\item \textit{$\sigma$-simplicity} (see below for the definition of $\sigma$-simplicity).
\end{itemize}
He then applied these results in~\cite[Sec.\ 6]{dJ00} to determine the height one prime ideals of down-up algebras, under certain technical restrictions arising from~\cite{dJ93}.  Here we consider any Noetherian generalized down-up algebra and obtain the following classification:

\begin{thm2*}\label{mainUFR}
Let $L=L(f, r, s, \gamma)$ be a generalized down-up algebra with $rs \neq 0$. Then $L$ is a Noetherian UFR  except if $f\neq 0$ and one of the following conditions is satisfied:
\begin{enumerate}
\item $f$ is not conformal, $r$ is not a root of unity, and there exists $\zeta\neq \gamma/(r-1)$ such that $f(\zeta)=0$;
\item $f$ is conformal, $\langle r, s\rangle$ is a free abelian group of rank $2$, and there exists $\zeta\neq \gamma/(r-1)$ such that $f(\zeta)=0$;
\item $\gamma\neq 0$, $r=1$, $s$ is not a root of unity, and $f\notin \K$.
\end{enumerate}
\end{thm2*}


\begin{flushleft}
\textbf{Acknowledgments.} The authors would like to thank support from the  \emph{Treaty of Windsor Programme}. They also wish to thank Christian Lomp and Paula Carvalho for helpful discussions concerning the topics of this paper, and  David Jordan for the reference~\cite{nC74}.
\end{flushleft}


\section{Generalized down-up algebras and Factoriality}\label{S:gduaF}

Throughout this paper, $\N$ is the set of nonnegative integers, $\K$ denotes an algebraically closed field of characteristic $0$ and $\K^*$ is the multiplicative group of units of $\K$. If $X$ is a subset of the ring $L$ then the two-sided ideal of $L$ generated by $X$ is denoted by  $\langle X\rangle$; we also write $\langle x_{1}, \ldots, x_{n}\rangle $ in place of 
$\langle\{ x_{1}, \ldots, x_{n}\}\rangle $. Moreover, we denote by $\Z(L)$ the centre of $L$.

Given a polynomial $f=a_{0}+a_{1}x+\cdots +a_{n}x^n\in\K[x]$, with all $a_{i}\in\K$, we define the support of $f$ to be the set $\mathrm{supp}\, (f)=\{ i\mid a_{i}\neq 0 \}$ and the degree of $f$, denoted $\deg (f)$, as the supremum of $\mathrm{supp}\, (f)$. In particular, the zero polynomial has degree $-\infty$. In the context of this paper, a monomial in the variable $x$ is a (nonzero) polynomial of the form $\lambda x^{k}$, for some $\lambda\in\K^{*}$ and some $k\geq 0$.


\subsection{Noetherian generalized down-up algebras}\label{SS:Ngdua}

Let $f\in\K[x]$ be a polynomial and fix scalars $r, s, \gamma\in\K$. The {\it generalized down-up algebra} $L=L(f, r, s, \gamma)$ was defined in~\cite{CS04} as the unital associative $\K$-algebra generated by $d$, $u$ and $h$, subject to the relations:
\begin{align}\label{E:defGdua1}
dh-rhd+\gamma d&=0,\\\label{E:defGdua2}
hu-ruh+\gamma u&=0,\\\label{E:defGdua3}
du-sud+f(h)&=0.
\end{align}

When $f$ has degree one,  we retrieve all down-up algebras 
$A(\alpha, \beta, \gamma)$, $\alpha$, $\beta$, $\gamma\in\K$, for suitable choices of the parameters of $L$. 

It is well known that $L$ is Noetherian $\iff$ $L$ is a domain $\iff$ $rs\neq 0$. Thus, from now on, we will always assume $rs\neq 0$. 
Moreover we can view $L$ as an iterated skew polynomial ring, 
\begin{equation}\label{E:LitOre}
L=\K[h][d;\sigma][u; \sigma^{-1}, \delta],
\end{equation}
where $\sigma (h)=rh-\gamma$, $\sigma (d)=sd$, $\delta (h)=0$, $\delta (d)=s^{-1}f(h)$. (See \cite{CS04} for more details.) \\

To finish this section, we describe the $\mathbb{Z}$-graduation of $L$ obtained by assigning to the generators the following degrees (see~\cite[Sec. 4]{CS04}):
\begin{equation}\label{E:zgrad}
\deg(u)=1,~\deg(d)=-1,~ \deg(h)=0.
\end{equation}
The decomposition $L=\oplus_{i \in \mathbb{Z}} L_i$ of $L$ into homogeneous components has been described in~\cite[Prop. 4.1]{CS04}: 
$$L_0 = \mathbb{K}[h,ud] \mbox{ is the commutative polynomial algebra generated by $h$ and $ud$},$$
and 
\begin{equation}\label{E:zgrad:homcomp}
L_{-i} =L_0d^i =d^iL_0, \quad L_i =L_0u^i =u^iL_0, \quad \mbox{ for } i>0.
\end{equation}


\subsection{Conformality and isomorphisms}\label{SS:conf}

When we consider two generalized down-up algebras, say $L=L(f, r, s, \gamma)$ and $\tilde{L}=L(\tilde{f}, \tilde{r}, \tilde{s}, \tilde{\gamma})$, we may denote their canonical generators by $d$, $u$, $h$ and $\tilde{d}$, $\tilde{u}$, $\tilde{h}$, respectively, if any confusion could arise regarding which algebra we are referring to. 

\begin{lemma}
The sets $\left\{ d^{i}\right\}_{i\geq 0}$ and $\left\{ u^{i}\right\}_{i\geq 0}$ are right and left denominator sets in $L$.
\end{lemma}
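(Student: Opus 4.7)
The plan is as follows. Since $L$ is a Noetherian domain (recall $rs\neq 0$ is our standing assumption), the reversibility clause in the definition of a denominator set is automatic, and I need only verify the Ore conditions. By the evident parallelism between the four sub-statements, I will concentrate on the right Ore property of $\{d^i\}$: given $a\in L$ and $i\geq 0$, I must produce $j\geq 0$ and $a'\in L$ with $ad^j = d^i a'$.

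The main tool is the $\mathbb{Z}$-grading (\ref{E:zgrad}) together with the decomposition (\ref{E:zgrad:homcomp}). The key preliminary step is to observe that conjugation by $d$ restricts to a $\K$-algebra automorphism $\tau$ of $L_0 = \K[h, ud]$: from (\ref{E:defGdua1}) one reads $\tau(h) = rh - \gamma$, and from (\ref{E:defGdua3}) the calculation $d(ud) = (du)d = (s\cdot ud - f(h))d$ gives $\tau(ud) = s\cdot ud - f(h)$. Since $L_0$ is a commutative polynomial ring, $\tau$ extends uniquely to an endomorphism of $L_0$, and its invertibility is verified by exhibiting the explicit inverse (sending $h \mapsto r^{-1}(h+\gamma)$ and $ud \mapsto s^{-1}(ud + f(r^{-1}(h+\gamma)))$). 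This immediately yields $L_0 d^j = d^j L_0$ for every $j\geq 0$, with $pd^j = d^j \tau^{-j}(p)$ for $p\in L_0$.

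With this in hand the Ore condition follows by a direct inspection of the homogeneous components, and by additivity it suffices to treat a homogeneous $a\in L_m$. If $m\leq 0$, (\ref{E:zgrad:homcomp}) gives $a = d^{|m|}p$ with $p\in L_0$, and then $ad^j = d^{|m|+j}\tau^{-j}(p)$ lies in $d^i L$ as soon as $j\geq i - |m|$. If $m>0$, write $a = u^m p$ with $p\in L_0$ and note that for $j\geq m$ the element $u^m d^j$ lies in $L_{m-j}$, so by (\ref{E:zgrad:homcomp}) it can be written as $d^{j-m}q$ for some $q\in L_0$; choosing $j\geq m+i$ then places $ad^j = d^{j-m}q\,\tau^{-j}(p)$ inside $d^i L$. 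For a general $a$, one picks a single $j$ large enough for each of its finitely many nonzero homogeneous components. The left Ore property of $\{d^i\}$ and both Ore conditions for $\{u^i\}$ are handled by parallel arguments, using the analogous automorphism of $L_0$ induced by conjugation by $u$ (dictated by (\ref{E:defGdua2}) and (\ref{E:defGdua3})). The only nonroutine step throughout is the verification that these conjugations are genuine automorphisms of $L_0$; everything else is elementary bookkeeping with the grading.
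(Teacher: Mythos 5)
Your argument is correct, but it takes a genuinely different route from the paper. The paper disposes of the lemma by citation: it invokes \cite[1.5]{dJ93} and \cite[Lem.\ 1.4]{kG92} to get that $\{d^i\}_{i\geq 0}$ is a left and right denominator set, and then transfers the statement to $\{u^i\}_{i\geq 0}$ via the anti-automorphism fixing $h$ and interchanging $d$ and $u$ (rather than rerunning the argument with conjugation by $u$, as you do). Your proof is instead a self-contained verification built on the $\mathbb{Z}$-grading (\ref{E:zgrad}) and the description (\ref{E:zgrad:homcomp}) of the homogeneous components; the one substantive ingredient, that conjugation by $d$ induces an automorphism $\tau$ of $L_0=\K[h,ud]$ with $\tau(h)=rh-\gamma$ and $\tau(ud)=s\cdot ud-f(h)$ (invertible precisely because $rs\neq 0$), is checked correctly, and your observation that reversibility is automatic in a domain is also right. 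What your approach buys is independence from the external references and a transparent explanation of where $rs\neq 0$ enters; what the paper's approach buys is brevity and the reuse of a general skew-polynomial-ring criterion. A small streamlining of your own argument: once you know $L_{m-j}=d^{\,j-m}L_0$ for $j\geq m$, you can conclude directly that $a d^j\in L_{m-j}\subseteq d^iL$ for $j\geq m+i$, without tracking the element $\tau^{-j}(p)$ explicitly --- the two-sided equalities in (\ref{E:zgrad:homcomp}) already encode the invertibility of $\tau$.
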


\begin{proof}
See \cite[1.5]{dJ93}. It follows from \cite[Lem. 1.4]{kG92} that  $\left\{ d^{i}\right\}_{i\geq 0}$ is a right and left denominator set in $L$. Using the anti-automorphism that fixes $h$ and interchanges $d$ and $u$ we obtain the corresponding statement for $\left\{ u^{i}\right\}_{i\geq 0}$.
\end{proof}

Fix the parameters $r, s\in\K^{*}$, $\gamma\in\K$, and consider the linear transformation $s\cdot \mathrm{1}-\sigma$ of $\K[h]$. We denote the image of $p\in\K[h]$ under this transformation by $p^{*}$. Specifically, $p^{*}(h)=sp(h)-p(rh-\gamma)$.

\begin{lemma}\label{L:conf:locd}
Let $L=L(f, r, s, \gamma)$, $p\in\K[h]$ and $\tilde{L}=L(f-p^{*}, r, s, \gamma)$. Consider the denominator sets $D=\left\{ d^{i}\right\}_{i\geq 0}$ in $L$, $\tilde{D}=\{ \tilde{d}^{i}\}_{i\geq 0}$ in $\tilde{L}$ and the corresponding localisations $LD^{-1}$ and $\tilde{L}\tilde{D}^{-1}$. 

There is an isomorphism $\phi: LD^{-1}\rightarrow \tilde{L}\tilde{D}^{-1}$, determined by $\phi (d)=\tilde{d}$, $\phi (h)=\tilde{h}$, $\phi (u)=\tilde{u}+p(\tilde{h})\tilde{d}^{-1}$.
\end{lemma}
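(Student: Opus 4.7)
The plan is to first define a $\K$-algebra homomorphism $\phi_{0}\colon L\to \tilde{L}\tilde{D}^{-1}$ on generators by $\phi_{0}(d)=\tilde d$, $\phi_{0}(h)=\tilde h$, $\phi_{0}(u)=\tilde u+p(\tilde h)\tilde d^{-1}$, and to verify that the three defining relations \eqref{E:defGdua1}--\eqref{E:defGdua3} of $L$ are preserved. Once this is done, $\phi_{0}(d)=\tilde d$ is a unit in $\tilde{L}\tilde{D}^{-1}$, so the universal property of the Ore localisation extends $\phi_{0}$ uniquely to the desired $\phi\colon LD^{-1}\to \tilde{L}\tilde{D}^{-1}$.

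The first relation is inherited verbatim from the corresponding relation in $\tilde L$ since $\phi_{0}$ fixes $d$ and $h$, so nothing is needed. For the other two, the main tool is the skew-commutation $\tilde d\,\tilde h=(r\tilde h-\gamma)\tilde d$, which in $\tilde{L}\tilde{D}^{-1}$ rewrites as $\tilde d^{-1}\tilde h=r^{-1}(\tilde h+\gamma)\tilde d^{-1}$ and, more generally, as $\tilde d\, q(\tilde h)\tilde d^{-1}=q(r\tilde h-\gamma)$ for every $q\in\K[h]$. Expanding $\tilde h\,\phi_{0}(u)-r\phi_{0}(u)\,\tilde h+\gamma\phi_{0}(u)$, the $\tilde u$-part vanishes by the relation already present in $\tilde L$, and the $p(\tilde h)\tilde d^{-1}$-part collapses to $p(\tilde h)\bigl[\tilde h-r\cdot r^{-1}(\tilde h+\gamma)+\gamma\bigr]\tilde d^{-1}=0$. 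For \eqref{E:defGdua3}, the expansion $\tilde d\,\phi_{0}(u)-s\phi_{0}(u)\tilde d+f(\tilde h)$ becomes
\[
(\tilde d\tilde u-s\tilde u\tilde d)+p(r\tilde h-\gamma)-sp(\tilde h)+f(\tilde h),
\]
which vanishes because $\tilde d\tilde u-s\tilde u\tilde d=-(f-p^{*})(\tilde h)$ in $\tilde L$ and $p^{*}(\tilde h)=sp(\tilde h)-p(r\tilde h-\gamma)$ by definition.

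To produce a two-sided inverse I would use the symmetry of the construction: since $p^{*}+(-p)^{*}=0$, starting from $\tilde L=L(f-p^{*},r,s,\gamma)$ with the polynomial $-p$ in the role of $p$ gives the algebra $L(f,r,s,\gamma)=L$. The same verification then produces a homomorphism $\psi\colon \tilde{L}\tilde{D}^{-1}\to LD^{-1}$ with $\psi(\tilde d)=d$, $\psi(\tilde h)=h$, $\psi(\tilde u)=u-p(h)d^{-1}$. Checking $\psi\circ\phi$ and $\phi\circ\psi$ on the three generators is immediate, since $\phi(u)-p(\tilde h)\tilde d^{-1}=\tilde u$ and symmetrically for $\psi$.

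The main obstacle is purely bookkeeping: keeping the two sets of generators separate and applying the skew-commutation rule in the correct direction when moving $\tilde d^{-1}$ across polynomials in $\tilde h$. Everything else is formal, and the definition of $p^{*}$ is precisely engineered so that the discrepancy introduced by the extra term $p(\tilde h)\tilde d^{-1}$ in $\phi_{0}(u)$ cancels in relation \eqref{E:defGdua3}.
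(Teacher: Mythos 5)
Your proposal is correct and follows essentially the same route as the paper: define the map on generators, verify the three defining relations in $\tilde{L}\tilde{D}^{-1}$ (with the key cancellation in the third relation coming from the definition of $p^{*}$), extend via the universal property of the localisation since $\tilde{d}$ is a unit, and obtain the inverse by running the same construction with $-p$. Your explicit verification of the second relation is a small addition the paper leaves as "immediately checked", but the argument is the same.
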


\begin{proof} To show the existence of  an algebra endomorphism $\phi: L\rightarrow \tilde{L}\tilde{D}^{-1}$ as stated, the following relations need to be checked in $L(f-p^{*}, r, s, \gamma)\tilde{D}^{-1}$:
\begin{align}
\tilde{d}\tilde{h}-r\tilde{h}\tilde{d}+\gamma \tilde{d}&=0;\\
\tilde{h}\left(\tilde{u}+p(\tilde{h})\tilde{d}^{-1}\right)-r\left(\tilde{u}+p(\tilde{h})\tilde{d}^{-1}\right)\tilde{h}+\gamma \left(\tilde{u}+p(\tilde{h})\tilde{d}^{-1}\right)&=0;\\
\tilde{d}\left(\tilde{u}+p(\tilde{h})\tilde{d}^{-1}\right)-s\left(\tilde{u}+p(\tilde{h})\tilde{d}^{-1}\right)\tilde{d}+f(\tilde{h})&=0.\label{E:rel3phi}
\end{align}
\noindent
As the first two of these relations are immediately checked, we show only~(\ref{E:rel3phi}):
\begin{align*}
\tilde{d}\left(\tilde{u}+p(\tilde{h})\tilde{d}^{-1}\right)&=\tilde{d}\tilde{u}+\tilde{d}p(\tilde{h})\tilde{d}^{-1}\\
&=\tilde{d}\tilde{u}+p(r\tilde{h}-\gamma)\\
&=s\tilde{u}\tilde{d}-(f-p^{*})(\tilde{h})+\left(sp(\tilde{h})-p^{*}(\tilde{h})\right)\\
&=s\left(\tilde{u}+p(\tilde{h})\tilde{d}^{-1}\right)\tilde{d}-f(\tilde{h}).
\end{align*}

As $\phi (d)$ is a unit in $\tilde{L}\tilde{D}^{-1}$, the map $\phi$ above extends (uniquely) to a map $\phi: LD^{-1}\rightarrow \tilde{L}\tilde{D}^{-1}$. Now, similar considerations show the existence of an inverse map $\psi: \tilde{L}\tilde{D}^{-1}\rightarrow LD^{-1}$, such that $\psi (\tilde{d})=d$, $\psi (\tilde{h})=h$, $\psi (\tilde{u})=u-p(h)d^{-1}$. Hence, $\phi$ is bijective.

\end{proof}

Given $r, s, \gamma\in\K$, we say that $f\in\K[h]$ is {\it conformal}  if there is $g$ such that $f=g^{*}$. We also say, somewhat abusively, that $f$ is conformal in $L(f, r, s, \gamma)$. Thus, if $f$ is conformal, then $LD^{-1}$ is isomorphic to $L(0, r, s, \gamma)\tilde{D}^{-1}$. In this case, in particular, the nonzero element $z:=ud-g(h)$ is normal and satisfies the relations $zh=hz$, $dz=szd$ and $zu=suz$.

The following results from~\cite{CL09}  determine when a polynomial $f$ is conformal in $L(f, r, s, \gamma)$.

\begin{lemma}[{\cite[Lem.\ 1.6]{CL09}}]\label{L:conf:gamma0}
Let $f=\sum a_{i}h^{i}$. Then $f$  is conformal in $L(f, r, s, 0)$ if and only if $s\neq r^{i}$ for all $i\in \mathrm{supp}\, (f)$. In that case,  a polynomial $g$ satisfying  $f(h)=sg(h)-g(rh)$  exists and is unique if we impose the additional condition that  $\mathrm{supp}\, (f)=\mathrm{supp}\, (g)$; in particular, $g$ can be chosen so that $\deg(g)=\deg(f)$.
\end{lemma}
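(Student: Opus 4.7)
The plan is to reduce the problem to elementary linear algebra on $\K[h]$. Since $\gamma = 0$, the $\K$-linear operator $T := s\cdot\mathrm{id} - \sigma$ on $\K[h]$ sends $p(h)$ to $sp(h) - p(rh)$, and in the monomial basis it acts diagonally: $T(h^j) = (s - r^j)h^j$ for every $j \geq 0$. Conformality of $f$ is by definition the statement that $f$ lies in $\mathrm{im}(T)$, so the question becomes a coefficient-by-coefficient problem.

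First I would write $f = \sum_j a_j h^j$ and $g = \sum_j b_j h^j$, so that $T(g) = f$ is equivalent to the family of scalar equations $(s - r^j)b_j = a_j$ for all $j \geq 0$. For $j \in \mathrm{supp}(f)$ we have $a_j \neq 0$, so solvability of the corresponding equation forces $s \neq r^j$ and then uniquely determines $b_j = a_j/(s - r^j)$, which is nonzero. Conversely, if $s = r^i$ for some $i \in \mathrm{supp}(f)$, the equation $(s - r^i)b_i = a_i$ has $0$ on the left and a nonzero value on the right, so $f \notin \mathrm{im}(T)$ and $f$ is not conformal. This proves the stated equivalence.

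For the uniqueness claim, once the coefficients $b_j$ with $j \in \mathrm{supp}(f)$ are fixed as above, the remaining freedom in $g$ lies in the $b_j$ with $j \notin \mathrm{supp}(f)$: these coefficients must vanish when $s \neq r^j$, while being arbitrary when $s = r^j$ (since then $h^j \in \ker T$). Imposing $\mathrm{supp}(g) = \mathrm{supp}(f)$ eliminates precisely this freedom, forcing $b_j = 0$ for $j \notin \mathrm{supp}(f)$, and so $g$ is uniquely determined under this constraint. Moreover, $\deg(f) \in \mathrm{supp}(f) = \mathrm{supp}(g)$ yields $\deg(g) = \deg(f)$, as required.

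I do not foresee any real obstacle, as the whole argument reduces to the spectral decomposition of a diagonal operator. The only subtle point is that, when $r$ is a root of unity and $s = r^j$ for some $j \notin \mathrm{supp}(f)$, the kernel of $T$ is nontrivial, so any preimage admits a whole affine family of alternatives; the support condition is exactly what is needed to single out a unique $g$.
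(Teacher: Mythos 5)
Your proof is correct: since $\gamma=0$ the operator $p\mapsto sp(h)-p(rh)$ is indeed diagonal on monomials with eigenvalues $s-r^{j}$, and the coefficient-by-coefficient analysis of solvability, uniqueness under the support condition, and the degree claim are all sound. The paper itself gives no proof of this lemma (it simply cites \cite[Lem.\ 1.6]{CL09}), and your argument is the standard one that the cited reference carries out.
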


\begin{proposition}[{\cite[Prop.\ 1.7]{CL09}}]\label{P:conf:gamma0rnot1}
If $r\neq 1$ then $L(f, r, s, \gamma)\simeq L(\tilde{f}, r, s, 0)$ for some polynomial $\tilde{f}$ of the same degree as $f$. Furthermore, $f$ is conformal in $L(f, r, s, \gamma)$ if and only if $\tilde{f}$  is conformal in $L(\tilde{f}, r, s, 0)$.
\end{proposition}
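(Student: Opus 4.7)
The plan is to produce an explicit isomorphism $L(f,r,s,\gamma)\simeq L(\tilde f, r, s, 0)$ by a translation of the generator $h$, and then exploit the fact that the conformality condition is preserved under this translation. Since $r\neq 1$, set $c:=\gamma/(r-1)$, so that $c(1-r)+\gamma=0$, equivalently $rc-\gamma=c$. Define $\tilde f(x):=f(x+c)$; clearly $\deg(\tilde f)=\deg(f)$. I would define an algebra homomorphism $\phi\colon L(\tilde f, r, s, 0)\to L(f,r,s,\gamma)$ on generators by $\phi(\tilde d)=d$, $\phi(\tilde u)=u$, $\phi(\tilde h)=h-c$. To show $\phi$ is well-defined, check the three defining relations of $L(\tilde f, r, s, 0)$, analogues of (\ref{E:defGdua1})--(\ref{E:defGdua3}) with $\gamma=0$. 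The first becomes $d(h-c)-r(h-c)d=dh-rhd+(rc-c)d=-\gamma d+(rc-c)d=0$, using (\ref{E:defGdua1}) and the choice of $c$; the second is symmetric. The third becomes $du-sud+\tilde f(h-c)=du-sud+f(h)=0$ by (\ref{E:defGdua3}). The inverse $\psi\colon L(f,r,s,\gamma)\to L(\tilde f, r, s, 0)$ is constructed symmetrically with $\psi(h)=\tilde h+c$, giving the desired isomorphism.

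For the conformality statement, the key observation is that the substitution $x\mapsto x+c$ yields a $\K$-linear bijection on $\K[x]$ which intertwines the two conformality operators. Concretely, for any $g\in\K[x]$, put $\tilde g(x):=g(x+c)$. Using $rc-\gamma=c$, one computes
\[
s\tilde g(x)-\tilde g(rx)=sg(x+c)-g(rx+c)=sg(x+c)-g(r(x+c)-\gamma),
\]
so that $s\tilde g(x)-\tilde g(rx)$ is precisely $(sg-g(r\cdot-\gamma))$ evaluated at $x+c$. Applying this equivalence with $f(x)=sg(x)-g(rx-\gamma)$ if and only if $\tilde f(x)=s\tilde g(x)-\tilde g(rx)$ completes the proof of the equivalence of conformality.

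The main (and only) obstacle is making the correct choice of the shift $c=\gamma/(r-1)$, which requires $r\neq 1$; once this is in place, all verifications are routine substitutions in three defining relations, together with the compatibility $rc-\gamma=c$ that simultaneously governs both the elimination of $\gamma$ in the commutation relations and the translation between the conformality equations.
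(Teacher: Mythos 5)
Your argument is correct: the shift $c=\gamma/(r-1)$ satisfies $rc-\gamma=c$, which makes all three relation checks and the intertwining of the conformality operators go through, and the translation $g\mapsto g(\cdot+c)$ is a bijection of $\K[x]$, so the equivalence of conformality follows. The paper itself only cites \cite[Prop.\ 1.7]{CL09} rather than proving the statement, and the cited proof uses the same idea of an affine change of variable in $h$ (there normalised as $\tilde f(h)=f\bigl(\tfrac{h+\gamma}{r-1}\bigr)$ rather than your pure translation), so your proof is essentially the standard one.
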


\begin{proposition}[{\cite[Prop.\ 1.8]{CL09}}]\label{P:conf:ris1}
$f$ is conformal in $L(f, 1, s, \gamma)$  except if $s=1$, $\gamma=0$ and $f\neq 0$.
\end{proposition}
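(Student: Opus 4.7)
My plan is to fix $r=1$ and study conformality as a question about the image of the $\K$-linear operator $T: \K[h]\to\K[h]$ defined by $T(g) = sg(h) - g(h-\gamma)$: by definition, $f$ is conformal in $L(f,1,s,\gamma)$ precisely when $f \in T(\K[h])$. The proof then splits into three cases according to whether $s\neq 1$, $s=1$ with $\gamma\neq 0$, or $s=1$ with $\gamma=0$. In the last case $T$ is identically zero, so the image is $\{0\}$ and this yields exactly the stated exception (non-conformality iff $f\neq 0$); the remaining two cases will show that $T$ is surjective.

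Assume first that $s \neq 1$. For any nonzero $g\in\K[h]$ with leading coefficient $c_n$ and degree $n$, the polynomials $sg(h)$ and $g(h-\gamma)$ both have leading term $c_n h^n$ (with coefficient $sc_n$ and $c_n$ respectively), so $T(g)$ has leading coefficient $(s-1)c_n\neq 0$. Hence $T$ preserves degree exactly and, restricted to the finite-dimensional subspace of polynomials of degree at most $n$, is injective and therefore bijective. Since this holds for every $n$, $T$ is surjective on $\K[h]$ and every $f$ is conformal.

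Now suppose $s=1$ and $\gamma \neq 0$. Then $T(g)(h) = g(h) - g(h-\gamma)$. For $g = \sum_{i=0}^{n} c_i h^i$ with $c_n\neq 0$ and $n\geq 1$, expanding $(h-\gamma)^n$ shows that the coefficient of $h^{n-1}$ in $T(g)$ equals $c_n n\gamma \neq 0$, so $T$ lowers degree by exactly one on non-constant polynomials and kills the constants. In particular, $T$ maps $\K[h]_{\leq n+1}$ into $\K[h]_{\leq n}$ with kernel $\K$, hence is surjective onto $\K[h]_{\leq n}$ by dimension count. Since every $f$ lies in some $\K[h]_{\leq n}$, conformality follows.

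There is no real obstacle here; the argument reduces to a linear-algebra observation about $T$, and the only thing to be careful about is using the hypothesis $\mathrm{char}\,\K=0$ in the case $s=1,\gamma\neq 0$, where one needs $n\gamma\neq 0$ for $n\geq 1$ so that $T$ drops degree by exactly one. The converse direction of the ``except if'' clause is immediate from the case $s=1=r$, $\gamma=0$, where $T\equiv 0$ forces $f=0$ for conformality.
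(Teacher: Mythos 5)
Your argument is correct: the three-way case split on $(s,\gamma)$ and the linear-algebra analysis of the operator $T(g)=sg(h)-g(h-\gamma)$ (degree-preserving when $s\neq 1$, degree-lowering by exactly one when $s=1$, $\gamma\neq 0$ thanks to $\mathrm{char}\,\K=0$, and identically zero when $s=\gamma-1=0$... i.e.\ $s=1$, $\gamma=0$) cleanly establishes surjectivity in the first two cases and the stated exception in the third. The paper does not reprove this statement but imports it from \cite[Prop.\ 1.8]{CL09}, so there is no in-text proof to compare against; your derivation is a complete and self-contained justification consistent with the paper's definition of conformality specialised to $r=1$.
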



\subsection{Noetherian unique factorisation rings and domains}\label{SS:ufrd}

In this section, we recall the notions of Noetherian unique factorisation rings and Noetherian unique factorisation domains introduced by Chatters and Jordan (see~\cite{C84,CJ86}). \\

An ideal $I$ in a ring $L$ is called {\it principal} if there exists a normal element $x$ in 
$L$ such that $I=\langle x \rangle  = xL = Lx$.

\begin{definition} \label{def-UFR}
A ring $L$ is called a \textit{Noetherian unique factorisation ring} (Noetherian UFR for short) 
if the following two conditions are satisfied: 
\begin{enumerate}
\item $L$ is a prime Noetherian ring;
\item Any nonzero prime ideal in $L$ contains a nonzero principal prime ideal.
\end{enumerate}
\end{definition}

\begin{definition} \label{def-UFD}
A Noetherian UFR $L$ is said to be a \emph{Noetherian unique factorisation domain} (Noetherian UFD for short) if $L$
is a domain and each height one prime ideal $P$ of $L$ is completely prime;
that is, $L/P$ is a domain for each  height one prime ideal $P$ of $L$. 
\end{definition}

Note that  the generalized down-up algebra $L=L(f,r,s,\gamma)$, with $rs \neq 0$, is Noetherian and has 
finite Gelfand-Kirillov dimension; so, it satisfies the descending chain
condition for prime ideals, see for example, \cite[Cor. 3.16]{KL00}. As, moreover,   $L$ is a prime Noetherian ring, 
we deduce from \cite{CJ86} the following result. 

\begin{proposition}
Let $L=L(f,r,s,\gamma)$ be a generalized down-up algebra  with $rs \neq 0$. Then $L$ is a Noetherian UFR if and only if 
all of its height one prime ideals are principal. 
\end{proposition}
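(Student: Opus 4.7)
The plan is to deduce this as a direct consequence of Definition~\ref{def-UFR}, once we have observed that $L$ satisfies the descending chain condition (DCC) on prime ideals. The key inputs are: (i) $L$ is a prime Noetherian domain (standard for $rs\neq 0$, and already recalled above); (ii) $L$ has finite Gelfand--Kirillov dimension, whence by \cite[Cor.\ 3.16]{KL00} any chain of prime ideals in $L$ has bounded length, so in particular $L$ has DCC on primes.

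For the forward implication, assume $L$ is a Noetherian UFR and let $P$ be a height one prime of $L$. By condition (b) of Definition~\ref{def-UFR}, $P$ contains a nonzero principal prime ideal $Q=\langle x\rangle$ with $x$ normal. Since $Q$ is a \emph{nonzero} prime contained in the height one prime $P$, we must have $Q=P$, so $P$ itself is principal. Thus every height one prime of $L$ is principal.

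For the converse, assume that every height one prime of $L$ is principal, and let $P$ be an arbitrary nonzero prime of $L$. Using the DCC on primes noted above, we can choose a strictly descending chain of primes starting at $P$ of maximal length, say
\[
P=P_{n}\supsetneq P_{n-1}\supsetneq\cdots\supsetneq P_{1}\supsetneq P_{0}=0.
\]
By maximality, $P_{1}$ is a height one prime of $L$, and by hypothesis it is principal. Thus $P$ contains a nonzero principal prime ideal. Together with the primeness and Noetherianity of $L$, this verifies both conditions of Definition~\ref{def-UFR}, so $L$ is a Noetherian UFR.

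The only substantive step here is invoking the DCC on primes to produce a height one prime beneath any given nonzero prime; the rest is a direct unwinding of the definition. No obstacle is really expected beyond citing \cite{CJ86} and \cite[Cor.\ 3.16]{KL00} for the finite GK-dimension input.
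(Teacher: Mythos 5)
Your proof is correct and follows essentially the same route as the paper, which simply invokes the descending chain condition on primes (via finite Gelfand--Kirillov dimension and \cite[Cor.\ 3.16]{KL00}) and cites \cite{CJ86}; you have merely written out the short verification that the paper delegates to that reference. Both directions are sound: height one forces a contained nonzero principal prime to coincide with the given prime, and DCC produces a height one prime beneath any nonzero prime.
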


To end this section, we recall a  noncommutative analogue of Nagata's Lemma (in the
commutative case, see \cite[19.20 p.\! 487]{E95}) that allows one to prove that a ring is a Noetherian UFR or a Noetherian UFD by 
proving this property for certain localisations of the ring under consideration.

If $L$ is a prime Noetherian ring and $x$ is a nonzero normal element 
of $L$, we denote by $L_x$ the (right) localisation of $L$ with respect to the 
powers of $x$. 

\begin{lemma}[{\cite[Lem. 1.4]{LLR06}}]\label{nagata}
Let $L$ be a prime Noetherian ring and $x$ a nonzero, nonunit, normal element 
of $L$ such that $\langle x \rangle$ is a completely prime ideal of $L$.
\begin{enumerate}
\item If $P$ is a prime ideal of $L$ not containing $x$ and such that the prime 
ideal $PL_{x}$ of $L_{x}$ is principal, then $P$ is principal.
\item If $L_{x}$ is a Noetherian UFR, then so is $L$. 
\item If  $L_{x}$ is a Noetherian UFD, then so is $L$.
\end{enumerate}
\end{lemma}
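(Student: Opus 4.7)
The plan is to prove (a) first and then deduce (b) and (c) by stratifying the height-one primes of $L$ according to whether or not they contain $x$.

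Before anything else, I would observe that $x$ is a regular element of $L$: if $xr=0$ with $r\neq 0$, normality of $x$ gives $xLr=Lxr=0$, so the product of two-sided ideals $(LxL)(LrL)\subseteq L(xLr)L=0$ vanishes, and primeness of $L$ forces $x=0$ or $r=0$. Thus $L$ embeds in $L_x$, and for any prime $P$ of $L$ with $x\notin P$ the class $x+P$ is a nonzero normal element of the prime ring $L/P$, hence regular there; this gives the basic equality $P=PL_x\cap L$.

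For part (a), let $PL_x=yL_x=L_xy$ with $y$ normal in $L_x$. Multiplying by a power of $x$ I may take $y\in L$, and then dividing out any extra power of $x$ I may further assume $y\notin xL$. From $P=PL_x\cap L$ I get $y\in P$. The crucial claim is that $y$ is normal in $L$. Given $a\in L$, normality in $L_x$ gives $ya=by$ with $b\in L_x$; write $b=cx^{-n}$ with $c\in L$ and $n\geq 0$ minimal, so $yax^n=cy$. If $n\geq 1$, then, using normality of $x$ to rewrite $yax^n\in x^nL$, one has $cy\in\langle x\rangle$; since $\langle x\rangle$ is completely prime and $y\notin\langle x\rangle$, it follows that $c\in xL$. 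Writing $c=xc'$, regularity of $x$ combined with one more application of the equality $xL=Lx$ produces a relation of the same shape but with $n$ replaced by $n-1$, contradicting minimality. Hence $b\in L$, i.e.\ $ya\in Ly$; the symmetric argument yields $ay\in yL$, so $y$ is normal. Finally, the equality $yL=P$ reduces to showing that $yL$ is $x$-saturated, and this is done by the same cancellation: if $px^m\in yL$ with $p\in P$ and $m$ minimal, writing $px^m=yc$ and running the previous argument once more forces $m=0$.

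For parts (b) and (c), I would invoke the Proposition just before the lemma: it suffices to show that every height-one prime $Q$ of $L$ is principal (and completely prime in the UFD case). Either $x\in Q$, in which case the nonzero prime ideal $\langle x\rangle$ is contained in $Q$ and the height-one hypothesis forces $Q=\langle x\rangle$, which is principal and, by assumption, completely prime; or $x\notin Q$, and then $QL_x$ is a height-one prime of $L_x$ via the standard correspondence between primes of $L$ disjoint from $\{x^n\}$ and primes of $L_x$. Under the UFR hypothesis on $L_x$ the latter is principal, and (a) transports this to $L$; under the UFD hypothesis the further injection $L/Q\hookrightarrow L_x/QL_x$ shows $Q$ is completely prime. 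The main obstacle is the cancellation step in the proof of (a): tracking the twist introduced by the noncentral normal element $x$ while pushing a relation from $L_x$ back into $L$, with complete primeness of $\langle x\rangle$ doing all the heavy lifting.
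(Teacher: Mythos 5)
First, note that the paper does not prove this lemma at all: it is imported verbatim from \cite[Lem.\ 1.4]{LLR06}, so there is no in-paper proof to compare against. Measured against the argument in that reference, your part (a) is essentially the standard one and is correct: pull the generator of $PL_x$ into $L$, strip off powers of $x$ until it lies outside $\langle x\rangle$, and use complete primeness of $\langle x\rangle$ together with regularity of $x$ to cancel denominators. Two small points there: the identity $yax^{n}=cy$ is not quite right when $x$ is not central --- what you actually get is $yax^{n}=c\,\tilde{y}$ with $\tilde{y}=x^{-n}yx^{n}$ --- but since conjugation by $x$ preserves $\langle x\rangle$, the element $\tilde{y}$ is again outside $\langle x\rangle$ and the complete-primeness step goes through unchanged (you flag this twist yourself); and the termination of ``dividing out any extra power of $x$'' deserves a word (the chain $Ly\subseteq Ly_{1}\subseteq\cdots$ stabilises by Noetherianity, and stabilisation would force $x$ to be a unit because $y$, being a nonzero normal element of the prime ring $L_x$, is regular).

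The one genuine gap is in your deduction of (b). You reduce to showing that every height-one prime of $L$ is principal, citing the Proposition preceding the lemma; but that Proposition is stated only for generalized down-up algebras and rests on finite Gelfand--Kirillov dimension to guarantee the descending chain condition on primes, hence that every nonzero prime contains a height-one prime. For the arbitrary prime Noetherian ring $L$ of the present lemma this is not available, so the reduction is unjustified. It is also unnecessary: Definition~\ref{def-UFR} asks only that every nonzero prime $Q$ contain a nonzero principal prime. If $x\in Q$, then $Q$ contains the nonzero principal prime $\langle x\rangle$ and you are done. If $x\notin Q$, then $QL_x$ is a nonzero prime of the Noetherian UFR $L_x$, hence contains a nonzero principal prime $P'$ of $L_x$; contracting, $P:=P'\cap L$ is a prime of $L$ not containing $x$ with $PL_x=P'$ principal, so $P$ is principal by (a), and $P\subseteq QL_x\cap L=Q$. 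No heights are needed. Your argument for (c) is fine as it stands, since there the height-one primes are exactly what Definition~\ref{def-UFD} refers to and the case analysis ($Q=\langle x\rangle$ versus $L/Q\hookrightarrow L_x/QL_x$) is the right one.
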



\subsection{Some prime ideals of $L$}\label{SS:ql}

In \cite[2.10]{dJ93}, Jordan defines prime ideals $Q(P)$ which depend on certain  prime ideals $P$ of a subalgebra which, in our setting, is $\K[h]$. It is easy to generalize that construction so as to include the case when $f$ is not conformal in $L$, which we will do below. We give the details only for the prime ideals of $\K[h]$ of the form $\langle h-\lambda\rangle$, with $\lambda\in\K$. The only case remaining concerns $\langle 0\rangle$, the zero ideal of $\K[h]$, which will not be necessary for our discussion and carries additional technical issues.

\begin{lemma} \label{L:ql:pk}
Let $L=L(f, r, s, 0)$ and write $f=\sum a_{i}h^{i}$. Then, for every $k\geq 0$, 
\begin{equation}\label{E:L:SS:ql}
 du^{k}=s^{k}u^{k}d-P_{k}(h)u^{k-1},
\end{equation}
where:
\begin{enumerate}
\item $P_{k}(h)= \sum_{i=0}^{k-1}s^{i}f(r^{-i}h)$;
\item If $f=g^{*}$ then $P_{k}(h)=s^{k}g(r^{1-k}h)-g(rh)$;
\item The coefficient of $h^{m}$ in $P_{k}(h)$ is $a_{m}k$, if $s=r^{m}$, and $a_{m}\frac{(sr^{-m})^{k}-1}{sr^{-m}-1}$  if $s\neq r^{m}$.
\end{enumerate}
In particular, if $f$ is not conformal then $P_{k}\neq 0$ for all $k>0$.
\end{lemma}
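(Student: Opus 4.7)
The plan is a straightforward induction on $k$ that proves the commutation identity~(\ref{E:L:SS:ql}) and the closed form (a) for $P_k$ simultaneously; parts (b) and (c) then follow by elementary manipulations of the formula in (a), and the final assertion is immediate from Lemma~\ref{L:conf:gamma0}.

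For the induction, the base case $k=0$ is trivial under the empty-sum convention $P_0 = 0$. For the step, I would compute
\begin{align*}
du^{k+1} = (du^k)u &= \bigl( s^k u^k d - P_k(h)u^{k-1}\bigr)u = s^k u^k(du) - P_k(h)u^k \\
&= s^{k+1}u^{k+1}d - s^k u^k f(h) - P_k(h)u^k,
\end{align*}
using $du = sud - f(h)$ from~(\ref{E:defGdua3}). The only step requiring care is moving $f(h)$ past $u^k$. From $hu=ruh$ (relation~(\ref{E:defGdua2}) with $\gamma=0$), an easy induction gives $u h^n = r^{-n}h^n u$ and hence, extended linearly, $u^k f(h) = f(r^{-k}h)u^k$. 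Substituting and comparing with the desired identity yields the recursion $P_{k+1}(h) = P_k(h) + s^k f(r^{-k}h)$, which is exactly what formula (a) predicts.

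For (b), writing $f = g^{*}$ as $sg(h) - g(rh)$ and substituting into the sum of (a) produces a telescoping sum whose value is $s^k g(r^{1-k}h) - g(rh)$. For (c), I would expand $f(r^{-i}h) = \sum_j a_j r^{-ij} h^j$ and extract the coefficient of $h^m$ in $P_k$, which equals $a_m \sum_{i=0}^{k-1}(sr^{-m})^i$; this geometric sum evaluates to $a_m k$ when $s = r^m$ and to $a_m\frac{(sr^{-m})^k - 1}{sr^{-m} - 1}$ otherwise.

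Finally, if $f$ is not conformal in $L(f,r,s,0)$, then by Lemma~\ref{L:conf:gamma0} there is some $m \in \mathrm{supp}(f)$ with $s = r^m$, and (c) shows that the coefficient of $h^m$ in $P_k$ is $a_m k$, nonzero for every $k > 0$. The only mildly delicate point in the whole argument is the commutation $u^k f(h) = f(r^{-k}h)u^k$, but it is routine from the defining relations.
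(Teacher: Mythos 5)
Your proposal is correct and follows essentially the same route as the paper: induction on $k$ using the relations $du=sud-f(h)$ and $u^kf(h)=f(r^{-k}h)u^k$ to get (a), telescoping for (b), a geometric sum for (c), and Lemma~\ref{L:conf:gamma0} for the final assertion. The paper merely states that these steps ``follow readily by induction''; your write-up supplies exactly the computations it leaves implicit.
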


\begin{proof}
Equation~(\ref{E:L:SS:ql}) along with parts (a) and (b) follow readily by induction on $k\geq 0$. Part (c) follows from (a). Finally, if $f$ is not conformal, recall from~\cite[Lem. 1.6]{CL09} that there is $m$ such that $a_{m}\neq 0$ and $s=r^{m}$. Thus, the coefficient of $h^{m}$ in $P_{k}(h)$ is nonzero, for $k>0$.
\end{proof}

Let $L=L(f, r, s, 0)$. Fix $\lambda \in \mathbb{K}$ and define the $L$-module $V_{\lambda}$ as follows. As a $\K$-vector space, 
$$
V_{\lambda}=\bigoplus_{i\geq 0}\K v_{i}
$$
and the $L$-action is given by:
\begin{align*}
 h.v_{k}&=r^{k}\lambda v_{k}\\
 u.v_{k}&=v_{k+1}\\
 d.v_{k}&=-P_{k}(r^{k-1}\lambda)v_{k-1},\ \text{for $k\geq 1$},\ \text{and}\quad d.v_{0}=0.
\end{align*}

Assume there is $k>0$ such that $P_{k}(r^{k-1}\lambda)=0$. Then $\bigoplus_{i\geq k}\K v_{i}$ is a proper submodule of $V_{\lambda}$. Let $k>0$ be minimal with this property, and define $M_{\lambda}=\bigoplus_{i\geq k}\K v_{i}$. Thus, $L_{\lambda}:=V_{\lambda}/M_{\lambda}$ is a finite-dimensional representation of $L$. Let $Q_{\lambda}:=\mathrm{ann}_{L}L_{\lambda}$. By the minimality of $k$ it is straightforward to see that $L_{\lambda}$ is simple. Thus, $Q_{\lambda}$ is a primitive ideal; in particular, it is prime.

\begin{remark}~
\begin{enumerate}
\item $M_{\lambda}$, $L_{\lambda}$ and $Q_{\lambda}$ are  defined only if there exists $k>0$ such that $P_{k}(r^{k-1}\lambda)=0$.
\item When $f$ is conformal, this construction is a special case of the construction in \cite[2.10]{dJ93}, where $P$ is the ideal of $\K[h]$ generated by $h-\lambda$ and $Q_{\lambda}=Q(\langle h-\lambda \rangle)$.
\end{enumerate}
\end{remark}

\begin{theorem}\label{T:ql}
 Let $L=L(f, r, s, 0)$. Suppose $\lambda\in\K$ is such that $P_{k}(r^{k-1}\lambda)=0$ for some $k>0$. Then $Q_{\lambda}$ is a non-principal maximal ideal of $L$ containing $d^{k}$ and $u^{k}$.
 
 Furthermore, if $P_{k}\neq 0$ for all $k>0$ (e.g., if $f$ is not conformal) and $Q$ is any prime ideal of $L$ containing $d^{k}$ and $u^{k}$ for some $k>0$, then there exists $\lambda\in\K$ such that $Q_{\lambda}$ is defined and $Q=Q_{\lambda}$.
\end{theorem}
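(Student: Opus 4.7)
The plan is to handle the two parts of the theorem separately, relying on the $L$-module structure of $L_{\lambda}$ together with the natural $\mathbb{Z}$-grading~(\ref{E:zgrad}) of $L$.

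For the first assertion, the containments $d^{k},u^{k}\in Q_{\lambda}$ are read off directly from the action on $L_{\lambda}$: $u$ shifts $v_{j}\mapsto v_{j+1}$, so $u^{k}$ annihilates every basis vector of $L_{\lambda}$, while $d$ sends $v_{j}$ to a scalar multiple of $v_{j-1}$ with $d.v_{0}=0$, so $d^{k}$ also annihilates $L_{\lambda}$. Since $L_{\lambda}$ is simple by the minimality of $k$, $Q_{\lambda}=\mathrm{ann}_{L}L_{\lambda}$ is primitive; as $L_{\lambda}$ is finite-dimensional over the algebraically closed field $\K$, Schur's lemma gives $\mathrm{End}_{L}(L_{\lambda})=\K$ and Jacobson density yields $L/Q_{\lambda}\cong \mathrm{End}_{\K}(L_{\lambda})\cong M_{k}(\K)$, so $Q_{\lambda}$ is maximal.

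For non-principality I exploit the $\mathbb{Z}$-grading. Each $L_{i}$ acts on $V_{\lambda}$ by shifting $v_{j}$ into $\K v_{j+i}$, so $Q_{\lambda}$ is a $\mathbb{Z}$-graded ideal, and the isomorphism above is graded (with $(M_{k})_{i}=\mathrm{span}\{E_{j,j+i}\}$). In particular, $(Q_{\lambda})_{0}:=Q_{\lambda}\cap L_{0}$ has $\K$-codimension $k$ in $L_{0}=\K[h,ud]$, so it is a finite-codimensional (hence height-$2$) ideal of a $2$-variable polynomial ring and, by Krull's Hauptidealsatz, cannot be principal. Now suppose for contradiction that $Q_{\lambda}=zL$ with $z$ normal. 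First I reduce to $z$ homogeneous: writing $z=z_{a}+\cdots+z_{b}$ with $z_{a},z_{b}\neq 0$, the fact that $Q_{\lambda}$ is graded forces $z_{b}\in Q_{\lambda}=zL$, so $z_{b}=zy$ for some $y\in L$. Comparing top and bottom $\mathbb{Z}$-degrees in the graded domain $L$ forces the top degree of $y$ to be $0$ and its bottom degree to be $b-a$, and since bottom degree $\leq$ top degree we obtain $a=b$. Once $z\in L_{a}$ is homogeneous, degree-additivity yields $(Q_{\lambda})_{0}=zL_{-a}$; writing $z=z_{0}u^{a}$ (for $a\geq 0$; symmetrically for $a<0$) with $z_{0}\in L_{0}$ and $L_{-a}=d^{a}L_{0}$, we obtain $(Q_{\lambda})_{0}=(z_{0}\cdot u^{a}d^{a})L_{0}$, a principal ideal of $L_{0}$. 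This contradicts the non-principality of $(Q_{\lambda})_{0}$.

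For the converse, let $Q$ be a prime with $d^{k},u^{k}\in Q$ and let $n>0$ be minimal with $u^{n}\in Q$. The identity $du^{n}=s^{n}u^{n}d-P_{n}(h)u^{n-1}$ of Lemma~\ref{L:ql:pk} together with $u^{n}\in Q$ gives $P_{n}(h)u^{n-1}\in Q$. The technical heart of the converse is to verify, using the commutation rule $p(h)u^{m}=u^{m}p(r^{m}h)$ together with $u^{n}\in Q$, that the product of two-sided ideals $\langle P_{n}(h)\rangle\cdot\langle u^{n-1}\rangle$ lies in $Q$. Then the primeness of $Q$, together with $u^{n-1}\notin Q$ (by minimality of $n$), forces $P_{n}(h)\in Q$; since $P_{n}\neq 0$ by hypothesis, $Q\cap \K[h]$ is a nonzero prime of $\K[h]$, say $\langle h-\mu\rangle$, with $P_{n}(\mu)=0$. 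The choice $\lambda:=r^{1-n}\mu$ then satisfies $P_{n}(r^{n-1}\lambda)=0$, so $Q_{\lambda}$ is defined; a direct comparison of generators gives $Q\supseteq Q_{\lambda}$, and maximality of $Q_{\lambda}$ yields $Q=Q_{\lambda}$. The hardest step overall is the non-principality argument, which combines the homogeneity reduction with the classical fact that finite-codimensional ideals in $\K[h,ud]$ cannot be principal; the subtlety in the converse is upgrading $P_{n}(h)u^{n-1}\in Q$ to $P_{n}(h)\in Q$ without the luxury of complete primeness.
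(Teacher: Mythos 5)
Your treatment of the first assertion is sound, and your non-principality argument (gradedness of $Q_{\lambda}$ together with the observation that $(Q_{\lambda})_{0}$ has finite codimension in $L_{0}=\K[h,ud]$ and hence cannot be a principal ideal of that polynomial ring) is a valid alternative to the paper's, which instead analyses the equation $\xi x=u^{k}$ directly in the graded domain $L$ and derives that $ud$ would have to be a unit of $\K[h,ud]$.

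The converse, however, contains a genuine error. You claim that $\langle P_{n}(h)\rangle\cdot\langle u^{n-1}\rangle\subseteq Q$ and deduce $P_{n}(h)\in Q$ from primeness. This is false in general, and the ideal $Q=Q_{\lambda}$ itself is a counterexample: there the minimal $n$ with $u^{n}\in Q_{\lambda}$ is $n=k$, and since $p(h)$ acts on $\bar{v}_{j}$ as the scalar $p(r^{j}\lambda)$, one has $P_{k}(h)\in Q_{\lambda}$ only if $P_{k}(r^{j}\lambda)=0$ for \emph{all} $j=0,\ldots,k-1$, whereas the hypothesis gives only $P_{k}(r^{k-1}\lambda)=0$. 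Concretely, $P_{k}(h)\,d^{k-1}u^{k-1}$ lies in $\langle P_{k}(h)\rangle\langle u^{k-1}\rangle$ but acts on $\bar{v}_{0}$ as the scalar $P_{k}(\lambda)\prod_{j=1}^{k-1}\bigl(-P_{j}(r^{j-1}\lambda)\bigr)$, which is nonzero whenever $P_{k}(\lambda)\neq 0$. The inclusion itself breaks down because $P_{n}(h)\,d^{m}\,u^{n-1}$ produces, via the relation of Lemma~\ref{L:ql:pk}, terms involving powers of $u$ strictly smaller than $n$, which need not lie in $Q$. What $P_{n}(h)u^{n-1}=u^{n-1}P_{n}(r^{n-1}h)\in Q$ actually yields --- and what the paper, following the proof of \cite[Thm.\ 2.12]{dJ93}, uses --- is only that the \emph{twisted} polynomial $P_{n}(r^{n-1}h)$ lies in some prime $\langle h-\mu\rangle$ of $\K[h]$, not that $P_{n}(h)\in Q$. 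Relatedly, your final step ``a direct comparison of generators gives $Q\supseteq Q_{\lambda}$'' is unsubstantiated: $Q_{\lambda}$ is defined as an annihilator with no given generating set, and proving $Q_{\lambda}\subseteq Q$ is precisely the substantive content of Jordan's argument, which requires relating $Q$ to the module $L_{\lambda}$. As a minor additional point, $Q\cap\K[h]$ is only $\sigma$-stable and $\sigma$-prime, not necessarily prime, so it need not have the form $\langle h-\mu\rangle$.
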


\begin{proof}
This follows essentially as in~\cite[Thm. 2.12]{dJ93}. We give details for completeness.  

Let $\rho: L\rightarrow \mathrm{End}_{\K}(L_{\lambda})$ be the map which defines the representation. Since $L_{\lambda}$ is finite-dimensional and simple, and $\K$ is algebraically closed, Schur's Lemma implies that $\mathrm{End}_{L}(L_{\lambda})$, the centraliser algebra of $L_{\lambda}$, is just $\K$. Thus, by the Jacobson Density Theorem, $\rho$ is onto and induces an algebra isomorphism $L/Q_{\lambda}\simeq \mathrm{End}_{\K}(L_{\lambda})$. As $\mathrm{End}_{\K}(L_{\lambda})$ is simple, the ideal $Q_{\lambda}$ is maximal.

If $Q$ is any prime ideal of $L$ containing $d^{k}$ and $u^{k}$ for some $k>0$, then the proof of~\cite[Thm. 2.12]{dJ93} shows that there is $k>0$ and a prime ideal $P$ of $\K[h]$ such that $P_{k}(r^{k-1}h)\in P$. As we are assuming $P_{k}\neq 0$ for all $k>0$, and $\K$ is algebraically closed, it follows that there is $\lambda\in\K$ such that $P_{k}(r^{k-1}\lambda)=0$. Then, as in the proof of~\cite[Thm. 2.12]{dJ93}, we have $Q_{\lambda}\subseteq Q$, and hence, by the maximality of $Q_{\lambda}$, we obtain $Q=Q_{\lambda}$.

Finally,   $Q_{\lambda}$ is not principal because, by the definition of $L_{\lambda}$, we have $d^{k}, u^{k}\in Q_{\lambda}$. This is a general fact concerning any generalized Weyl algebra $D(\phi, a)$ over a commutative domain $D$ such that $0\neq a\in D$ is not a unit. (Recall, e.g.~\cite[Lem. 2.7]{CS04}, that $L$ is a generalized Weyl algebra, where $D$ is the polynomial algebra in the variables $h$ and $a=ud$.) Nevertheless, we give the specific details for $L$. 

Assume $\xi L$ is a principal ideal of $L$ containing $u^{k}$, for some $k>0$. Then, the equation $\xi x=u^{k}$, for $x\in L$, implies that both $\xi$ and $x$ must be homogeneous, with respect to the $\mathbb{Z}$-grading defined in~(\ref{E:zgrad}). Assume $\xi$ has degree $n<0$. Then we can write $\xi=td^{-n}$ and $x=t'u^{k-n}$, for some $t, t'\in\K[h, ud]$. We have:
\begin{equation*}
u^{k}=(td^{-n})(t'u^{k-n})=t\phi^{-n}(t')d^{-n}u^{-n}u^{k}=t\phi^{-n}(t')\left( \prod_{i=1}^{-n}\phi^{i}(ud)\right)u^{k},
\end{equation*}
where $\phi$ is the automorphism of $\K[h, ud]$ defined by $\phi(h)=rh$ and $\phi (ud)=sud-f(h)$. The above equation implies that $ud$ is a unit in $\K[h, ud]$, which is a contradiction. Hence, $\xi$ has degree $n\geq 0$. Similarly, assuming that $d^{k}\in\xi L$, we conclude that $\xi$ has degree $n\leq 0$. It follows that , if $\xi L$ contains both $u^{k}$ and $d^{k}$, then $\xi\in\K[h, ud]$. But then the equation $\xi (tu^{k})=u^{k}$, for $t\in\K[h, ud]$, implies that $\xi$ is a unit and $\xi L=L$. Thus, no proper ideal of $L$ containing $u^{k}$ and $d^{k}$ can be principal.
\end{proof}

We end this section by pointing out  some principal height one prime ideals which will also be of interest later. 

\begin{lemma}\label{L:ql:h1p}
Let $L=L(f, r, s, 0)$. Then the normal element $h$ generates a height one, completely prime ideal of $L$. Furthermore, if $r$ is a primitive root of unity of order $l\geq 1$ then, for any $\lambda\in\K^{*}$, the central element $h^{l}-\lambda$ generates a height one prime ideal of $L$ which is completely prime if and only if $r=1$.
\end{lemma}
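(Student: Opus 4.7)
The plan is to use the iterated Ore extension description $L = \K[h][d;\sigma][u;\sigma^{-1},\delta]$ throughout, treating the two assertions in parallel and reducing each to a primality or domain question about the relevant quotient.

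For the first assertion, with $\gamma = 0$ the defining relations give $dh = rhd$ and $uh = r^{-1}hu$, so $h$ is normal and $\langle h\rangle = hL = Lh$. In $L/\langle h\rangle$ only $\bar d\bar u - s\bar u\bar d + f(0) = 0$ survives, presenting the quotient as the Ore extension $\K[\bar d][\bar u;\tau,\partial]$ with $\tau(\bar d) = s^{-1}\bar d$ and $\partial(\bar d) = s^{-1}f(0)$, in particular a domain; hence $\langle h\rangle$ is completely prime. Being nonzero and normally generated, Jategaonkar's noncommutative principal ideal theorem forces its height to equal $1$.

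For the second assertion, $r^{l}=1$ gives $dh^{l}=r^{l}h^{l}d=h^{l}d$ and $uh^{l}=h^{l}u$, so $h^{l}-\lambda\in\Z(L)$. Thus $\langle h^{l}-\lambda\rangle$ is generated by a central element, and the principal ideal theorem bounds its height by $1$. For primality, note that $(h^{l}-\lambda)$ is $\sigma$-stable in $\K[h]$ (since $\sigma(h^{l}-\lambda)=r^{l}h^{l}-\lambda$) and is killed by $\delta|_{\K[h]}=0$, so the iterated Ore extension structure descends to
$$
L/\langle h^{l}-\lambda\rangle \;\cong\; \bar R[\bar d;\bar\sigma][\bar u;\bar\sigma^{-1},\bar\delta],
$$
where $\bar R = \K[h]/(h^{l}-\lambda)\cong \K^{l}$ by the Chinese Remainder Theorem (the $l$ roots of $h^{l}-\lambda$ being distinct since $\lambda\neq 0$). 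Because $r$ is a primitive $l$-th root of unity, $\bar\sigma$ cyclically permutes the $l$ minimal idempotents of $\bar R$, so $\bar R$ is $\bar\sigma$-simple and a fortiori $\bar\sigma$-prime. The standard Ore extension theorem for $\sigma$-prime rings then yields that $\bar R[\bar d;\bar\sigma]$ is prime, and adjoining $\bar u$ as an Ore extension by an automorphism and a derivation preserves primeness, so $\langle h^{l}-\lambda\rangle$ is a height one prime.

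For complete primality, if $r=1$ then $l=1$ and the first-part argument (with $f(\lambda)$ replacing $f(0)$) shows $L/\langle h-\lambda\rangle$ is a domain. If $r\neq 1$, fix $\mu$ with $\mu^{l}=\lambda$; the factorisation $h^{l}-\lambda=\prod_{i=0}^{l-1}(h-\mu r^{-i})$ has pairwise distinct factors, so in $L/\langle h^{l}-\lambda\rangle$ each $\bar h - \mu r^{-i}$ is nonzero while their product vanishes, ruling out a domain. The main obstacle is the primality step above: the quotient lives over the non-domain $\K^{l}$, so one cannot invoke the familiar ``Ore extension of a domain is a domain'' result and must instead argue via the $\bar\sigma$-simplicity of $\bar R$.
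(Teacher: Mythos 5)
Your proof is correct and follows essentially the same route as the paper: normality of $h$ and identification of $L/\langle h\rangle$ as a quantum plane or (quantum) Weyl algebra, the Principal Ideal Theorem for the height bound, $\sigma$-primeness of $(h^{l}-\lambda)\K[h]$ transferred through the iterated Ore extension for primality, and the nontrivial factorisation of $h^{l}-\lambda$ to rule out complete primality when $l\geq 2$. The only cosmetic difference is that you pass to the quotient $\bar R[\bar d;\bar\sigma][\bar u;\bar\sigma^{-1},\bar\delta]$ over $\K^{l}$ and prove it is prime via $\bar\sigma$-simplicity, whereas the paper keeps the ideal upstairs and cites Bell's proposition twice — these are the same argument in two packagings.
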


\begin{proof}
 First, notice that $h$ is normal, as $\gamma=0$, and generates a completely prime ideal, as the factor algebra $L/\langle h \rangle$ is either a quantum plane or a quantum Weyl algebra, or one of their classical analogues, in case $s=1$. By the Principal Ideal Theorem (see~\cite[4.1.11]{MR01}), $\langle h \rangle$ has height one. 
 
If $r$ is a primitive root of unity of order $l\geq 1$ then $h^{l}$ is central. Consider the presentation $L=\K[h][d;\sigma][u; \sigma^{-1}, \delta]$ of $L$ as an iterated skew polynomial ring, as given in~(\ref{E:LitOre}) above, with $\sigma (h)=rh$ and $\delta (h)=0$.
It is easy to see that $(h^{l}-\lambda)\K[h]$ is a $\sigma$-prime ideal of $\K[h]$ (i.e., it is a prime ideal in the lattice of $\sigma$-stable ideals of $\K[h]$). It follows, e.g. by~\cite[Prop.\ 2.1]{aB85}, that $h^{l}-\lambda$ generates a prime ideal of $\K[h][d; \sigma]$. In particular, this ideal is  $\sigma$-prime and $\delta$-stable, so it follows by~\cite[Prop.\ 2.1]{aB85} that $\langle h^{l}-\lambda \rangle$ is a prime ideal of $L=\K[h][d;\sigma][u; \sigma^{-1}, \delta]$. Again by the Principal Ideal Theorem, this ideal has height one.

If $l\geq 2$, then $h^{l}-\lambda$ factors nontrivially, as $\K$ is algebraically closed, so $\langle h^{l}-\lambda \rangle$ is not completely prime, by simple degree arguments. Otherwise, if $l=1$ then $r=1$ and the factor algebra $L/\langle h-\lambda \rangle$ is again a quantum plane or a quantum Weyl algebra, or one of their classical analogues, so in this case the ideal $\langle h-\lambda \rangle$ is completely prime.
\end{proof}


\section{The case $f$ not conformal}\label{S:fnconf}

Assume $f=\sum a_{i}h^{i}$ is not conformal. Then, by Propositions~\ref{P:conf:gamma0rnot1} and~\ref{P:conf:ris1}, we can assume $\gamma=0$. By Lemma~\ref{L:conf:gamma0}, we can write $f=f_{c}+f_{nc}$, where $f_{c}=g^{*}$ is conformal and $f_{nc}$ is such that $s=r^{i}$ for all $i\in\mathrm{supp}\, (f_{nc})$. Such a decomposition $f=f_{c}+f_{nc}$ is unique, and $f_{nc}\neq 0$, as $f$ is not conformal.

\begin{lemma}
 Let $L=L(f, r, s, 0)$ with $f$ not conformal. There is $j\in\mathrm{supp}\, (f)$ such that $s=r^{j}$ and $f_{nc}=h^{j}F$, where $0\neq F\in\K[h]\cap\Z(L)$. Furthermore:
\begin{enumerate}
\item If $r$ is not a root of unity, then $F\in\K^{*}$;
\item If $r$ is a root of unity of order $l\geq 1$, then $F(h)=G(h^{l})$ is a polynomial in the central indeterminate $h^{l}$. 
\end{enumerate}
\end{lemma}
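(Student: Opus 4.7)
The plan is to exploit the decomposition $f = f_c + f_{nc}$ together with the characterisation of the support of $f_{nc}$ given by Lemma~\ref{L:conf:gamma0}, and then observe that the commutation relations (with $\gamma = 0$) make centrality of a polynomial in $h$ a purely combinatorial condition on its support.

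\textbf{Step 1 (producing $j$ and factoring out $h^j$).} By Lemma~\ref{L:conf:gamma0}, $f_{nc}$ is the sum of the monomials $a_i h^i$ in $f$ for which $s = r^i$. Since $f$ is not conformal, $f_{nc} \neq 0$, so the set $T := \mathrm{supp}(f_{nc})$ is nonempty. Set $j := \min T$; then $j \in \mathrm{supp}(f)$ and $s = r^j$. Write $f_{nc}(h) = h^j F(h)$ with $F \in \K[h]$ and $F \neq 0$ (indeed, the coefficient of $h^0$ in $F$ equals $a_j \neq 0$).

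\textbf{Step 2 (controlling the support of $F$).} For every $i \in T$ we have $s = r^i$ and $s = r^j$, hence $r^{i-j} = 1$. Since $\mathrm{supp}(F) = \{i - j : i \in T\}$, this gives
\[
\mathrm{supp}(F) \subseteq \{k \in \N : r^k = 1\}.
\]
In case (a), $r$ is not a root of unity and the right-hand side is $\{0\}$, so $F$ is a nonzero constant, i.e.\ $F \in \K^*$. In case (b), the right-hand side equals $l\N$, so $F(h) = G(h^l)$ for some $G \in \K[h]$.

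\textbf{Step 3 (centrality).} From the defining relations with $\gamma = 0$ one has $dh = rhd$ and $hu = ruh$, whence, by an easy induction, $dP(h) = P(rh)d$ and $uP(h) = P(r^{-1}h)u$ for any $P \in \K[h]$. Therefore $P(h) \in \Z(L)$ if and only if $P(rh) = P(h)$ as polynomials in $h$. In case (a), $F$ is a constant, so this holds trivially. In case (b), $F(rh) = G(r^l h^l) = G(h^l) = F(h)$ because $r^l = 1$. In both cases $F \in \K[h] \cap \Z(L)$, as required.

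The argument is essentially bookkeeping once Lemma~\ref{L:conf:gamma0} is in hand; the only place requiring a moment's care is the translation between the support condition $r^k = 1$ and the centrality criterion, which is why I spell out the commutation rules $dP(h) = P(rh)d$ and $uP(h) = P(r^{-1}h)u$ explicitly.
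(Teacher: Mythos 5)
Your proof is correct and follows essentially the same route as the paper's: take $j=\min\mathrm{supp}(f_{nc})$ using the characterisation of $f_{nc}$ from Lemma~\ref{L:conf:gamma0}, factor out $h^{j}$, and note that $r^{i-j}=1$ forces $\mathrm{supp}(F)\subseteq\{0\}$ or $\subseteq l\N$ according to whether $r$ is a root of unity. The only difference is that you spell out the centrality criterion $dP(h)=P(rh)d$, $uP(h)=P(r^{-1}h)u$, which the paper leaves implicit; this is a harmless (and welcome) elaboration.
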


\begin{proof}
 Let us write $f_{nc}= \sum_{i\in\mathrm{supp}\, (f_{nc})} a_{i}h^{i}$. Let $j=\min \mathrm{supp}\, (f_{nc})$. Thus, $s=r^{j}$ and we can write $f_{nc}=h^{j}F(h)$, where $F(h)=\sum_{i\in\mathrm{supp}\, (f_{nc})} a_{i}h^{i-j}$. Given $i\in\mathrm{supp}\, (f_{nc})$, we have $i-j\geq 0$ and $r^{i}=s=r^{j}$, so $r^{i-j}=1$.
 
 If $r$ is not a root of unity, then $i=j$ and $F(h)\in\K^{*}$. Otherwise, if $r$ is a primitive $l$-th root of unity, then $l$ divides $i-j$ and $F(h)$ is a polynomial in $h^{l}$, which is thus central, as $\gamma=0$. 
 \end{proof}

\begin{proposition}
Let $L=L(f, r, s, 0)$, with $f$ not conformal, and consider the localisation $LD^{-1}$, where $D=\left\{ d^{i}\right\}_{i\geq 0}$. Then $\left\{ h^{i}\right\}_{i\geq 0}$ is a right and left denominator set in $LD^{-1}$ and the localisation at this set is isomorphic to $\hat{L}=L(F, r, 1, 0)$ localised at the multiplicative set generated by the corresponding elements $\hat{d}$ and $\hat{h}$ in $\hat{L}$, where $f=f_{c}+f_{nc}$ and $f_{nc}=h^{j}F$, as in the previous lemma.
\end{proposition}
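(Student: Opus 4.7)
The first assertion is immediate: since $\gamma=0$, the relations $dh=rhd$ and $hu=ruh$ make $h$ normal in $L$, so its powers form a left and right Ore set in the Noetherian domain $LD^{-1}$. For the isomorphism, I would first apply Lemma~\ref{L:conf:locd} with $p=g$, where $g^{*}=f_{c}$ is the conformal part of $f$; this yields $LD^{-1}\cong L(f_{nc},r,s,0)\tilde{D}^{-1}$ and reduces the question to the case where $f=f_{nc}=h^{j}F$ with $s=r^{j}$.

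On this reduced algebra, I would propose the map
$$\phi\colon L(h^{j}F,r,r^{j},0)\longrightarrow \hat{L}[\hat{d}^{-1},\hat{h}^{-1}],\qquad \phi(h)=\hat{h},\quad \phi(d)=\hat{d},\quad \phi(u)=\hat{u}\hat{h}^{j}.$$
The relations $dh=rhd$ and $hu=ruh$ transcribe directly, while the decisive relation $du-sud+f(h)=0$ is checked by
\begin{align*}
\phi(du) &= \hat{d}\hat{u}\hat{h}^{j} = \bigl(\hat{u}\hat{d}-F(\hat{h})\bigr)\hat{h}^{j} = r^{j}\hat{u}\hat{h}^{j}\hat{d} - \hat{h}^{j}F(\hat{h})\\
&= \phi\bigl(s\,ud-f(h)\bigr),
\end{align*}
using $\hat{d}\hat{h}^{j}=r^{j}\hat{h}^{j}\hat{d}$ and the commutativity of $F(\hat{h})$ with $\hat{h}^{j}$. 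Since $\phi(d)$ and $\phi(h)$ are units in the target, $\phi$ extends uniquely to $L[d^{-1},h^{-1}]$, which is precisely the localisation of $LD^{-1}$ at the powers of $h$.

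To finish, I would define the candidate inverse $\psi\colon\hat{L}[\hat{d}^{-1},\hat{h}^{-1}]\to L[d^{-1},h^{-1}]$ by $\hat{h}\mapsto h$, $\hat{d}\mapsto d$, $\hat{u}\mapsto uh^{-j}$, perform the mirror relation check, and verify that $\phi\psi$ and $\psi\phi$ fix the generators. The only genuinely delicate point — and what I expect to be the main obstacle — is the bookkeeping in the third relation above: the asymmetric choice $u\mapsto \hat{u}\hat{h}^{j}$ (rather than $\hat{h}^{j}\hat{u}$) is forced so that the scalar $r^{j}=s$ produced by moving $\hat{d}$ past $\hat{h}^{j}$ lands on the correct side of $\hat{u}\hat{d}$, and the identity $s=r^{j}$ together with the commutativity of $F(\hat{h})$ with $\hat{h}^{j}$ are what make the term $\hat{h}^{j}F(\hat{h})$ match $f(\hat{h})$; beyond this, everything is routine Ore-localisation formalism.
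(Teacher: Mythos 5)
Your proposal is correct and follows essentially the same route as the paper: reduce via Lemma~\ref{L:conf:locd} to $L(f_{nc},r,s,0)$, then send $\tilde u\mapsto \hat u\hat h^{j}$ into $\hat L=L(F,r,1,0)$ and invert $\hat d,\hat h$; your relation check (using $s=r^{j}$ and centrality of $\hat h^{j}$ among polynomials in $\hat h$) is exactly the verification the paper leaves as ``easy to see.''
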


\begin{proof}
The first statement follows from the normality of $h$ in $L$. We have already seen in Lemma~\ref{L:conf:locd} that $LD^{-1}$ is isomorphic to $\tilde{L}\tilde{D}^{-1}$, where $\tilde{L}=L(f_{nc}, r, s, 0)$, under an isomorphism that maps $h$ to $\tilde{h}$, $d$ to $\tilde{d}$ and $u$ to $\tilde{u}+g(\tilde{h})\tilde{d}^{-1}$, where $f_{c}=g^{*}$. So it suffices to show that $\tilde{L}$ localised at the multiplicative set generated by $\tilde{d}$ and $\tilde{h}$ is isomorphic to the corresponding localisation of $\hat{L}=L(F, r, 1, 0)$ at the multiplicative set generated by $\hat{d}$ and $\hat{h}$.

It is easy to see that there is an algebra homomorphism $\Phi: L(f_{nc}, r, s, 0)\longrightarrow L(F, r, 1, 0)$ such that $\Phi (\tilde{d})=\hat{d}$, $\Phi (\tilde{h})=\hat{h}$ and $\Phi (\tilde{u})=\hat{u}\hat{h}^{j}$. This homomorphism clearly extends to an isomorphism when we pass to the localisation under consideration.
\end{proof}

Next, we define a (left and right) denominator set $X$ in $L(f, r, s, 0)$, which depends on $r$:
\begin{enumerate}
\item If $r$ is not a root of unity, then $X$ is the multiplicative set generated by $d$ and $h$.
\item If $r$ is a root of unity of order $l\geq 1$, then $X$ is the multiplicative set generated by $d$, $h$ and the central elements of the form $h^{l}-\lambda$, for $\lambda\in\K^{*}$.
\end{enumerate}

\begin{proposition}\label{P:locX}
Let $L=L(f, r, s, 0)$ and assume $f$ is not conformal. Then the localisation of $L$ at the denominator set $X$ defined above is a simple algebra.
\end{proposition}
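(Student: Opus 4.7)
The strategy is to reduce $LX^{-1}$ to a localisation of $\hat{L}=L(F,r,1,0)$ via the previous Proposition, perform a change of generators to exhibit the result as a skew Laurent extension of a commutative ring $R$, and then invoke the standard criterion that a skew Laurent ring $R[\hat{d}^{\pm 1};\sigma]$ is simple whenever $R$ is a commutative Noetherian $\sigma$-simple domain and $\sigma$ has infinite order.

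By the previous Proposition together with Lemma \ref{L:conf:locd}, $LX^{-1}$ is isomorphic to the localisation $\hat{L}\hat{X}^{-1}$ of $\hat{L}$ at the image $\hat{X}$ of $X$, so it suffices to show $\hat{L}\hat{X}^{-1}$ is simple. In $\hat{L}\hat{X}^{-1}$ the element $F(\hat{h})$ is a unit: trivially when $r$ is not a root of unity, since then $F\in\K^*$ by the first lemma of this section; and when $r$ is a primitive $l$-th root of unity because $F(\hat{h})=G(\hat{h}^l)$ factors over the algebraically closed field $\K$ as a product of $\hat{h}^l$ and factors of the form $\hat{h}^l-\lambda\in\hat{X}$. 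Set $y:=F(\hat{h})^{-1}\hat{u}\hat{d}$; a direct computation from the defining relations yields that $y$ commutes with $\hat{h}$, that $\hat{d}y=(y-1)\hat{d}$, and that $\hat{u}=F(\hat{h})\,y\,\hat{d}^{-1}$. Furthermore, the commutative subring of $\hat{L}\hat{X}^{-1}$ generated by $\hat{h}$ and the available inverses is $\K[\hat{h}^{\pm 1}]$ in the first case, and the rational function field $\K(\hat{h})$ in the second, the latter because each $\hat{h}-\mu$ with $\mu\in\K^*$ divides $\hat{h}^l-\mu^l\in\hat{X}$. Hence $\hat{L}\hat{X}^{-1}$ admits the presentation $R[\hat{d}^{\pm 1};\sigma]$, with $R=\K[\hat{h}^{\pm 1}][y]$ or $R=\K(\hat{h})[y]$ according to the case, and with $\sigma(y)=y-1$, $\sigma(\hat{h})=r\hat{h}$.

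Infinite order of $\sigma$ is clear from $\sigma^k(y)=y-k\neq y$ for $k\neq 0$, since $\K$ has characteristic zero. For the $\sigma$-simplicity of $R$ in the root-of-unity case, $R$ is a PID and any nonzero $\sigma$-invariant ideal has the form $(q)$ with $\sigma(q)=uq$ for some $u\in\K(\hat{h})^*$; applying $\sigma^l$, which is the identity on $\K(\hat{h})$, yields $q(y-l)=Nq(y)$ with $N=\prod_{i=0}^{l-1}\sigma^i(u)$, and comparing the two top $y$-coefficients of both sides forces $N=1$ and $\deg_y q=0$, so $(q)=R$. In the case where $r$ has infinite order, take $0\neq f=\sum_{i=0}^n p_i(y)\hat{h}^i$ in a $\sigma$-invariant ideal $I$ (normalised by a power of $\hat{h}$ so that the $\hat{h}$-support of $f$ starts at $0$); then $\sigma(f)-r^n f\in I$ has top $\hat{h}$-coefficient $r^n[p_n(y-1)-p_n(y)]$ of strictly smaller $y$-degree than $p_n$, while its lower $\hat{h}^i$-coefficients $r^i p_i(y-1)-r^n p_i(y)$ vanish only when $p_i=0$, since $r^n\neq r^i$. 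Iterating, either the $y$-degree of $p_n$ drops to zero and then the top $\hat{h}$-degree drops, or $\sigma(f)=r^n f$, in which case the eigenvector analysis forces $f$ to be a scalar multiple of the unit $\hat{h}^n$, so $I=R$.

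The main technical obstacle is the $\sigma$-simplicity verification when $r$ is not a root of unity, since $\sigma$ moves both $y$ and $\hat{h}$ non-trivially and the reduction must be organised so that neither the $y$-degree of the leading coefficient nor the $\hat{h}$-degree of $f$ grows during the iteration.
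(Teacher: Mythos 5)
Your proof is correct, and the first half (reduction to $\hat{L}=L(F,r,1,0)$ with $F$ invertible in the localisation, hence removable) coincides with the paper's reduction. Where you diverge is the key step: the paper identifies $L(1,r,1,0)$ with $\mathbb{A}_{1}(\K)[h;\phi]$ and computes the \emph{entire} prime spectrum (Theorem~\ref{T:fnconf:spec}) by classifying normal elements, leaning on the simplicity of the Weyl algebra in characteristic $0$; simplicity of the localisation then falls out because every nonzero prime is generated by an element of $X$. You instead invert $\hat{d}$ first, pass to the variable $y=F(\hat{h})^{-1}\hat{u}\hat{d}$, and exhibit the localisation as a skew Laurent extension $R[\hat{d}^{\pm 1};\sigma]$ of a commutative base, after which simplicity follows from the standard criterion ($R$ $\sigma$-simple, $\sigma$ of infinite order, no inner powers since $R$ is commutative) once $\sigma$-simplicity of $R$ is checked by hand. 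Your $\sigma$-simplicity computations are sound: in the root-of-unity case the PID argument with $\sigma^{l}$ acting trivially on $\K(\hat{h})$ works, and in the non-root-of-unity case the double induction (first on $\deg_{y}$ of the top $\hat{h}$-coefficient via $g\mapsto\sigma(g)-r^{n}g$, then on the $\hat{h}$-degree, using that the $\hat{h}^{0}$-coefficient never dies because $r^{n}\neq 1$) is essentially a two-variable version of the eigenvector argument already quoted in Lemma~\ref{L:rnru:fc:ssimple}. The trade-off: the paper's route produces the full description of $\mathrm{Spec}\,L(1,r,1,0)$ as a result of independent interest, while yours avoids classifying normal elements and prime ideals altogether at the cost of a direct $\sigma$-simplicity verification; both ultimately use characteristic $0$ in the same essential way (via $p(y-1)-p(y)\neq 0$ for nonconstant $p$, which is the simplicity of $\mathbb{A}_{1}(\K)$ in disguise). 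The only step you leave implicit is that the localisation is free over $R$ with basis $\{\hat{d}^{i}\}_{i\in\mathbb{Z}}$, which follows routinely from the $\mathbb{Z}$-grading~(\ref{E:zgrad:homcomp}) and is treated as routine elsewhere in the paper as well.
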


\begin{proof}
By the previous result, it is enough to assume $L=L(F, r, 1, 0)$, where $F\neq 0$ is either a scalar (if $r$ is not a root of unity) or a polynomial in the central indeterminate $h^{l}$ (if $r$ has order $l\geq 1$). Furthermore, since $F$ is central and invertible in the localisation under consideration ($\K$ is algebraically closed), we can assume $F=1$, on replacing u by $uF^{-1}$. The result then follows from the description below of the prime ideals of $L(1, r, 1, 0)$.
 \end{proof}
 
 \begin{theorem} \label{T:fnconf:spec}
 Let $L=L(1, r, 1, 0)$.
\begin{enumerate}
\item Assume that $r$ is not a root of unity. Then $\mathrm{Spec}(L)=\{\langle 0\rangle, \langle h\rangle \}$. 
\item Assume that $r$ is a primitive $l$-th root of unity, for $l\geq 1$. Then $$\mathrm{Spec}(L)=\{ \langle 0\rangle, \langle h\rangle \} \cup \{\langle h^l-\lambda\rangle ~|~\lambda \in \mathbb{K}^*\}.$$
\end{enumerate}

\end{theorem}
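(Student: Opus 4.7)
The plan is in three parts. First, I verify each listed ideal is prime: $\langle 0 \rangle$ since $L$ is a domain (as $rs = 1 \neq 0$), and $\langle h \rangle$, along with $\langle h^l - \lambda \rangle$ for $\lambda \in \K^*$ in case (b), by Lemma~\ref{L:ql:h1p}. In fact $L/\langle h \rangle \cong A_1(\K)$, the first Weyl algebra, which is simple, so $\langle h \rangle$ is maximal.

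Second, I show that no proper prime $P$ of $L$ contains any power of $d$ or of $u$. For $k = 1$: if $d \in P$ then $ud, du \in P$, whence $1 = ud - du \in P$, forcing $P = L$; the case $u \in P$ is symmetric. For $k \geq 2$: the Weyl relation $[u,d] = 1$ yields $[u, d^k] = k d^{k-1}$ by the Leibniz rule, so $d^k \in P$ forces $kd^{k-1} = ud^k - d^k u \in P$. Since $\mathrm{char}\,\K = 0$, this gives $d^{k-1} \in P$, and iterating produces $d \in P$, a contradiction. (Alternatively, Lemma~\ref{L:ql:pk} with $f = 1$, $s = 1$ gives $P_k(h) = k \neq 0$, so Theorem~\ref{T:ql} implies no $Q_\lambda$ is defined and no prime contains both $d^k$ and $u^k$.)

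Third, I classify the primes of $L$ omitting all powers of $d$ and $u$. View $L$ as the generalized Weyl algebra $D(\sigma, z)$ with $D = \K[h, z]$, $z = ud$, $\sigma(h) = rh$ and $\sigma(z) = z - 1$ (the latter because $du = ud - 1 = z - 1$). By Bavula's classification of prime ideals in generalized Weyl algebras, the primes of $L$ of the above form correspond bijectively with the $\sigma$-prime ideals of $D$ not containing $z$, via $\mathfrak{p} \mapsto L \mathfrak{p} L$. A $\sigma$-prime of $D$ is the intersection of a finite $\sigma$-orbit of primes of $D$; since $\sigma(z) = z - 1$ is a translation of infinite order, any prime of $D$ whose generators involve $z$ has infinite orbit, so finite-orbit primes lie in $\K[h]$. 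The orbit of $(h - \alpha)$ under $\sigma\colon h \mapsto rh$ is $\{(h - \alpha r^{-k}) : k \geq 0\}$, finite iff $\alpha = 0$ (giving $\{(h)\}$) or $r$ is a root of unity. In case (a) this produces only $(0)$ and $(h)$, yielding $\{0\}$ and $\langle h \rangle$. In case (b), each orbit of $(h - \alpha)$ with $\alpha \neq 0$ has size $l$ with corresponding $\sigma$-prime $\prod_{k=0}^{l-1}(h - \alpha r^{-k}) = (h^l - \alpha^l)$; as $\alpha$ ranges over $\K^*$, $\lambda = \alpha^l$ ranges over $\K^*$, giving the remaining primes $\langle h^l - \lambda \rangle$. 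The main obstacle is pinning down a clean reference for the GWA classification; a self-contained alternative is to exploit the $\K^*$-torus action $\tau_t(u) = tu$, $\tau_t(d) = t^{-1}d$, $\tau_t(h) = h$, which preserves the defining relations, to reduce to graded primes and then classify these via their intersection with the degree-zero component $L_0 = D$.
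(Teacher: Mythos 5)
Your first two parts are sound: the listed ideals are prime by Lemma~\ref{L:ql:h1p}, and the computation $[u,d^k]=kd^{k-1}$ from $ud-du=1$, together with $\mathrm{char}\,\K=0$, correctly shows that no proper ideal of $L$ contains a power of $d$ or of $u$. The gap is in part three, which is where the actual content of the theorem lies. The statement you attribute to ``Bavula's classification'' --- that the primes of a generalized Weyl algebra $D(\sigma,z)$ meeting no power of $d$ or $u$ biject with the $\sigma$-primes of $D$ via extension --- is false as a general fact about generalized Weyl algebras: take $D=\K[z]$, $\sigma=\mathrm{id}$, $a=z$; then $D(\sigma,a)$ is the commutative polynomial ring $\K[X,Y]$ with $z=XY$, and the maximal ideal $\langle X-1,Y-1\rangle$ contains no power of $X$ or $Y$ yet is not the extension of its contraction $\langle z-1\rangle$. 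So the bijection is not formal; it must be proved for your particular $\sigma$, and that proof is precisely the missing half of the argument. Concretely, you must show that over each $\sigma$-prime $\mathfrak{q}$ of $D=\K[h,z]$ there is exactly one prime of $L$; as written, nothing excludes, say, a nonzero prime of $L$ contracting to $\langle 0\rangle$ in $D$, so your list is only a lower bound for $\mathrm{Spec}(L)$. The step can be supplied: localise at the powers of $d$ to get $L_d=\K[h,z][d^{\pm1};\sigma]$, note that every finite $\sigma$-orbit prime $\mathfrak{p}$ of $D$ with orbit length $n$ has the property that $\sigma^{n}$ induces an automorphism of infinite order on $\mathrm{Frac}(D/\mathfrak{p})$ (because $\sigma^{n}(z)=z-n$), so the associated skew Laurent extension of this field is simple and $\mathfrak{q}L_d$ is the unique prime lying over $\mathfrak{q}$. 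Your fallback via the $\K^*$-action has the same hole: reducing arbitrary primes to graded ones again requires computing the centres of the localised strata, which is the same calculation in disguise.

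Your determination of the $\sigma$-primes of $\K[h,z]$ themselves is correct, and once the uniqueness-over-each-$\sigma$-prime step is inserted the argument closes and is genuinely different from the paper's, which instead works in $\mathbb{A}_1(\K)[h;\phi]$, uses the simplicity of $\mathbb{A}_1(\K)$ and a division algorithm to show every nonzero prime is generated by a normal element, and then classifies the normal elements directly.
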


\begin{proof} This follows from the isomorphism $L(1, r, 1, 0)\simeq \mathbb{A}_{1}(\K)[h;\phi]$, where  $\mathbb{A}_{1}(\K)$ denotes the first Weyl algebra over $\K$, generated by $d$ and $u$, subject to the relation $ud-du=1$, and $\phi$ is the automorphism of $\mathbb{A}_{1}(\K)$ defined by $\phi (d)=r^{-1}d$ and $\phi (u)=ru$. Thus, we identify the algebras $L$ and $\mathbb{A}_{1}(\K)[h;\phi]$.  Below we sketch the proof, which relies on the simplicity of $\mathbb{A}_{1}(\K)$ (recall that $\K$ has characteristic $0$).

Firstly, all ideals listed in the statement are prime, e.g. by Lemma~\ref{L:ql:h1p}. We will show that there are no other prime ideals.
There is an $\mathbb{N}$-grading on $\mathbb{A}_{1}(\K)[h;\phi]$ such that the homogenous component of degree $n\geq 0$ is $\mathbb{A}_{1}(\K)h^{n}=h^{n}\mathbb{A}_{1}(\K)$. This grading is, of course, different from the usual $\mathbb{Z}$-grading of $L$ we consider in the paper, but for the remainder of this proof, this is the grading we will consider. As usual, the degree of an element in $\mathbb{A}_{1}(\K)[h;\phi]$ is the maximum of the degrees of its nonzero homogeneous components, i.e., its degree as a polynomial in $h$.

Assume $P\neq \langle 0\rangle$ is a prime ideal of $\mathbb{A}_{1}(\K)[h;\phi]$. Let $0\neq \xi\in P$ be a (not necessarily homogeneous) element of minimum degree, say $n\geq 0$. Then the set of leading coefficients of nonzero elements of $P$ of degree $n$, adjoined with $0$, is easily seen to be an ideal of $\mathbb{A}_{1}(\K)$. As the latter is simple and $\xi\neq 0$, it follows that this ideal contains $1$. Therefore, we can assume that $\xi$ is monic. By the minimality of the degree of $\xi$ and the fact that its leading coefficient is a unit, we can use right and left division algorithms to conclude that $P$ is principal and generated by $\xi$, both on the right and on the left. In particular, $\xi$ is normal. 

Since $\mathbb{A}_{1}(\K)[h;\phi]$ is $\mathbb{N}$-graded, every homogeneous constituent of $\xi$ is normal, so we will first determine the homogeneous elements of $\mathbb{A}_{1}(\K)[h;\phi]$ which are normal. Assume $ah^{i}$ is normal, where $a\in \mathbb{A}_{1}(\K)$ and $i\geq 0$. Then, as $h^{i}$ is itself normal, it follows that $a$ is normal in $\mathbb{A}_{1}(\K)$. Thus, $a\in\K$. This shows, in particular, that the normal elements of $\mathbb{A}_{1}(\K)[h;\phi]$ are polynomials in $h$ with coefficients in $\K$, but not every such polynomial is normal, except if $r=1$. Indeed, suppose $0\neq \xi=\sum_{i\geq 0} \lambda_{i}h^{i}$ is normal, where $\lambda_{i}\in\K$. Then there is $a$ such that $d\xi=\xi a$. It must then be that $a\in\mathbb{A}_{1}(\K)$, by degree considerations, and $\lambda_{i}d=\lambda_{i}\phi^{i}(a)$, for all $i$. If $\lambda_{i}$ and $\lambda_{j}$ are nonzero, then $r^{i}d=\phi^{-i}(d)=a=\phi^{-j}(d)=r^{j}d$, so $r^{i-j}=1$. This implies that we can write $\xi=h^{k}G$, where $k\geq 0$ and  either $G$ is a (nonzero) scalar, if $r$ is not a root of unity, or $G$ is a polynomial in $h^{l}$ with scalar coefficients and nonzero constant term, if $r$ is a primitive $l$-th root of unity. 

As $h$ and $G$ are normal, and $P=\langle \xi\rangle$ is prime, either $h\in P$ or $G\in P$. If the former occurs, then $P=\langle h \rangle$. Otherwise, $k=0$, $r$ is a primitive $l$-th root of unity, and $P=\langle h^l-\lambda\rangle$, for some $\lambda\in\K^{*}$, as $\K$ is algebraically closed and, up to a scalar, $G$ can be factored into central polynomials of the form $h^l-\mu$, for $\mu\in\K^{*}$. This establishes the claim.
\end{proof}

Similarly, we can define a (left and right) denominator set $Y$ in $L(f, r, s, 0)$, which can be obtained from $X$ by replacing $d$ by $u$. Specifically:
\begin{enumerate}
\item If $r$ is not a root of unity, then $Y$ is the multiplicative set generated by $u$ and $h$.
\item If $r$ is a root of unity of order $l\geq 1$ then $Y$ is the multiplicative set generated by $u$, $h$ and the central elements of the form $h^{l}-\lambda$, for $\lambda\in\K^{*}$.
\end{enumerate}

\begin{proposition}\label{P:locY}
Let $L=L(f, r, s, 0)$ and assume $f$ is not conformal. Then the localisation of $L$ at the denominator set $Y$ defined above is a simple algebra.
\end{proposition}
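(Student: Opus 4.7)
The plan is to mirror the proof of Proposition~\ref{P:locX}, interchanging the roles of $d$ and $u$ throughout. The crucial first step is a $u$-variant of Lemma~\ref{L:conf:locd}: for any $q \in \K[h]$, one constructs an isomorphism between $L(f, r, s, 0)U^{-1}$ (with $U=\{u^i\}_{i\geq 0}$) and $L(f - q^{**}, r, s, 0)\tilde{U}^{-1}$, given by $u \mapsto \tilde{u}$, $h \mapsto \tilde{h}$, $d \mapsto \tilde{d} + q(\tilde{h})\tilde{u}^{-1}$, where $q^{**}(h) := sq(r^{-1}h) - q(h)$. The verification runs parallel to Lemma~\ref{L:conf:locd}; the only difference is that one must use the relation $\tilde{u}\tilde{h} = r^{-1}\tilde{h}\tilde{u}$ (so that $\tilde{u}p(\tilde{h})\tilde{u}^{-1} = p(r^{-1}\tilde{h})$) in place of $\tilde{d}\tilde{h} = r\tilde{h}\tilde{d}$.

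Writing the decomposition $f = f_c + f_{nc}$ with $f_c = g^*$, a short calculation shows that the choice $q(h) := g(rh)$ yields $q^{**} = g^* = f_c$; hence $L(f, r, s, 0)U^{-1}$ becomes isomorphic to $L(f_{nc}, r, s, 0)\tilde{U}^{-1}$. At this stage I would additionally localise at powers of $h$ (which is normal, since $\gamma=0$) and, when $r$ has finite order $l$, also at the central elements $h^l - \lambda$ for $\lambda \in \K^*$. Using the isomorphism $\Phi: L(f_{nc}, r, s, 0) \to L(F, r, 1, 0)$ from the preceding proposition---which sends $\tilde{d} \mapsto \hat{d}$, $\tilde{h} \mapsto \hat{h}$, $\tilde{u} \mapsto \hat{u}\hat{h}^j$---and the fact that $\hat{u} = \Phi(\tilde{u})\hat{h}^{-j}$ once $\hat{h}$ is a unit, the whole expression becomes isomorphic to $L(F, r, 1, 0)$ localised at $\hat{u}$, $\hat{h}$, and (when applicable) the $\hat{h}^l - \lambda$.

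Since $F$ is central and invertible in this localisation, the substitution $u \leftarrow uF^{-1}$---exactly as in the proof of Proposition~\ref{P:locX}---reduces the problem to showing the simplicity of $L(1, r, 1, 0)$ localised at the multiplicative set generated by $u$, $h$ and (if $r$ has order $l$) the $h^l - \lambda$. By Theorem~\ref{T:fnconf:spec}, every nonzero prime ideal of $L(1, r, 1, 0)$ is generated either by $h$ or by some $h^l - \lambda$ with $\lambda \in \K^*$; all such primes therefore vanish in the localisation, proving simplicity.

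The main technical hurdle is the $u$-analog of Lemma~\ref{L:conf:locd}: one must identify the correct transformation $q^{**}$ (distinct from $q^*$ because the commutation of $\tilde{u}$ with $\tilde{h}$ involves $r^{-1}$ rather than $r$) and verify that $q(h) = g(rh)$ is the substitution that kills $f_c$. Once this is in place, the remainder is a direct transcription of the argument in Proposition~\ref{P:locX}.
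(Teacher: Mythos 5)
Your argument is correct, but it takes a genuinely different route from the paper. The paper's proof of Proposition~\ref{P:locY} is a three-line reduction: it uses the isomorphism $L(f,r,s,0)\to L(f,r^{-1},s^{-1},0)$ given by $d\mapsto -s^{-1}u$, $u\mapsto d$, $h\mapsto h$, notes that conformality is preserved under $(r,s)\mapsto(r^{-1},s^{-1})$ (via $G(h)=-sg(rh)$), observes that this map carries the set $Y$ to the set $X$ of the target algebra, and then simply invokes Proposition~\ref{P:locX}. You instead re-run the entire chain of reductions on the $u$-side: a $u$-analogue of Lemma~\ref{L:conf:locd} with the transformation $q^{**}(h)=sq(r^{-1}h)-q(h)$, the verification that $q(h)=g(rh)$ satisfies $q^{**}=g^{*}=f_c$ (which is correct: $sq(r^{-1}h)-q(h)=sg(h)-g(rh)$ since $\gamma=0$), the passage to $L(f_{nc},r,s,0)$, the map $\Phi$ to $L(F,r,1,0)$ made invertible after localising at $\hat{h}$, the normalisation $F=1$, and finally Theorem~\ref{T:fnconf:spec}. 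All of these steps check out, including the subtler ones (invertibility of $F$ in the localisation because $G$ has nonzero constant term, and the fact that inverting $\hat{u}\hat{h}^{j}$ together with $\hat{h}$ is the same as inverting $\hat{u}$ and $\hat{h}$). What the paper's approach buys is brevity and the avoidance of any new computation; what yours buys is a self-contained, explicit $u$-side analogue of the conformality-removal lemma, which could be reused elsewhere. Either is acceptable, though you might note that the single isomorphism $d\mapsto -s^{-1}u$, $u\mapsto d$, $h\mapsto h$ packages your entire mirror-image construction at once.
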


\begin{proof}
 Consider the isomorphism $L(f, r, s, 0)\longrightarrow L(f, r^{-1}, s^{-1}, 0)$, defined by the correspondence $d\mapsto -s^{-1}u$, $u\mapsto d$, $h\mapsto h$. Notice that $f$ is conformal in $L(f, r, s, 0)$ if and only if $f$ is conformal in $L(f, r^{-1}, s^{-1}, 0)$ (if $f(h)=sg(h)-g(rh)$ then $f(h)=s^{-1}G(h)-G(r^{-1}h)$, for $G(h)=-sg(rh)$). Thus, our claim follows from applying our previous result to $L(f, r^{-1}, s^{-1}, 0)$ and the denominator set $X$, and using this isomorphism.
\end{proof}

We can now determine when $L(f, r, s, 0)$ is a Noetherian UFR or a Noetherian UFD, assuming $f$ is not conformal.

\begin{theorem}\label{T:fnconf:class}
Let $L=L(f, r, s, 0)$ and assume $f$ is not conformal. Then $L$ is a Noetherian UFR, except in the case that $f$ is not a monomial  and $r$ is not a root of unity. Moreover, $L$ is a Noetherian UFD if and only if either $r=1$ or if $r$ is not a root of unity and $f$ is a monomial.
\end{theorem}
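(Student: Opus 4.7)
The plan is to combine the simplicity of the localisations of $L$ at $X$ and $Y$ (from Propositions \ref{P:locX} and \ref{P:locY}) with Theorem \ref{T:ql} (classifying primes containing powers of $d$ and $u$) and Lemma \ref{L:ql:h1p} (identifying principal height-one primes). Since $L$ embeds into the simple localisation at $X$, every nonzero prime $P$ of $L$ must meet $X$; by primeness this forces $P$ to contain one of the generators of $X$, namely $d$, $h$, or (when $r$ is a primitive $l$-th root of unity) some $h^l - \mu$ with $\mu \in \K^*$. The symmetric statement via $Y$ replaces $d$ by $u$. Combining these, every nonzero prime of $L$ contains $h$, contains some $h^l - \mu$, or contains both $d$ and $u$.

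Specialising to a height-one prime $P$: the first two alternatives give $P = \langle h \rangle$ (completely prime and principal) or $P = \langle h^l - \mu \rangle$ (central and hence principal, completely prime iff $l = 1$), both via Lemma \ref{L:ql:h1p}. In the remaining case, Lemma \ref{L:ql:pk} guarantees $P_k \neq 0$ for all $k > 0$, so Theorem \ref{T:ql} gives $P = Q_\lambda$ for some $\lambda$, a non-principal maximal ideal. The critical step is to determine when such a $Q_\lambda$ can actually be height one, which I would analyse via the action $h \cdot v_i = r^i \lambda v_i$ on the defining module $L_\lambda$. If $\lambda = 0$, then $h \in Q_0$, so $\langle h \rangle \subsetneq Q_0$ forces height at least $2$. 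If $r$ has order $l$ and $\lambda \neq 0$, then $(r^i \lambda)^l = \lambda^l$ for every $i$, so $h^l - \lambda^l \in Q_\lambda$ and $\langle h^l - \lambda^l \rangle \subsetneq Q_\lambda$ again forces height at least $2$. In the only remaining subcase -- $r$ not a root of unity and $\lambda \neq 0$ -- $X$ is generated by $d$ and $h$ alone, so any prime $P'$ with $0 \subsetneq P' \subsetneq Q_\lambda$ must contain $d$ or $h$; since $h \notin Q_\lambda$ it contains $d$, and symmetrically $P'$ contains $u$. Then $P' = Q_\mu$ is maximal by Theorem \ref{T:ql}, contradicting strict inclusion in $Q_\lambda$. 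Hence $Q_\lambda$ is genuinely height one precisely in this subcase.

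A non-principal height-one prime of $L$ therefore exists if and only if $r$ is not a root of unity and some $Q_\lambda$ with $\lambda \neq 0$ is defined. Since $P_1(\lambda) = f(\lambda)$, any nonzero root of $f$ supplies such a $\lambda$, and over the algebraically closed field $\K$ this is equivalent to $f$ not being a monomial. Conversely, if $f = \alpha h^n$, then non-conformality forces $s = r^n$ and a short induction via Lemma \ref{L:ql:pk}(c) yields $P_k(h) = k \alpha h^n$, so $\lambda = 0$ is the only possibility and no non-principal height-one prime arises. This proves the UFR classification.

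For the UFD refinement I additionally impose complete primality of every height-one prime. By Lemma \ref{L:ql:h1p}, $\langle h \rangle$ is always completely prime, while $\langle h^l - \mu \rangle$ with $\mu \in \K^*$ is completely prime iff $l = 1$; such ideals appear among the height-one primes exactly when $r$ is a root of unity. Intersecting with the UFR criterion gives that $L$ is a Noetherian UFD precisely when $r = 1$, or $r$ is not a root of unity and $f$ is a monomial. The main obstacle is the third subcase in the height analysis of $Q_\lambda$, where height one must be genuinely established by ruling out every possible intermediate prime; the remainder of the argument is essentially bookkeeping given the prior results.
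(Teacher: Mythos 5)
Your proposal is correct and follows essentially the same route as the paper: both use the simple localisations of Propositions~\ref{P:locX} and~\ref{P:locY} to reduce the height one primes to $\langle h\rangle$, $\langle h^l-\mu\rangle$ and the $Q_\lambda$ of Theorem~\ref{T:ql}, then decide the height of $Q_\lambda$ via $h\in Q_0$, $h^l-\lambda^l\in Q_\lambda$, and the intermediate-prime argument when $r$ is not a root of unity, with the monomial/non-monomial dichotomy entering through $P_1=f$ and Lemma~\ref{L:ql:pk}. The only difference is organisational (you classify the heights of the $Q_\lambda$ uniformly before splitting on $f$, whereas the paper splits on $f$ first), which does not change the substance.
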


\begin{proof}

Let us first identify the possible height one primes.

Let $P$ be a height one prime ideal of $L$. If $P$ does not contain any power of $d$ or if $P$ does not contain any power of $u$ then, by Propositions~\ref{P:locX} and~\ref{P:locY}, $P$ must contain either  $h$ or $h^{l}-\lambda$, for some $\lambda\in\K^{*}$ (the latter can occur only when $r$ is a primitive $l$-th root of unity, for some $l\geq 1$), as these elements are normal. But both $h$ and $h^{l}-\lambda$ generate prime ideals, by Lemma~\ref{L:ql:h1p}, so it follows that either $P=\langle h\rangle$ or $P=\langle h^{l}-\lambda\rangle$.

Otherwise, $P$ must contain both a power of $d$ and a power of $u$. Thus, $P=Q_{\lambda}$, for some $\lambda\in\K$, by Theorem~\ref{T:ql}. In particular, $P_{k}(r^{k-1}\lambda)=0$ for some $k>0$ (with the notation of Lemma \ref{L:ql:pk}). Assume that $\lambda=0$. Then  $h\in Q_{0}=P$ and $P=\langle h\rangle$, which is a contradiction, as $\langle h\rangle$ does not contain any power of $d$. So $P=Q_{\lambda}$ for some $\lambda\in\K^{*}$.

To summarise, the possible height one primes of $L$ are:
$\langle h\rangle$;  $\langle h^{l}-\lambda\rangle$ with $\lambda\in\K^{*}$ and  $Q_{\lambda}$ for some $\lambda\in\K^{*}$ such that $P_{k}(r^{k-1}\lambda)=0$, for some $k>0$.

We now distinguish between three different cases.

Let us first consider the case that $f$ is a monomial. Then $P_{k}\neq 0$ is also a monomial and hence the only possibility for $\lambda $ to satisfy $P_{k}(r^{k-1}\lambda)=0$  is $\lambda=0$, which is a contradiction. So the only possible height one primes of $L$ are $\langle h\rangle$ and  $\langle h^{l}-\lambda\rangle$ with $\lambda\in\K^{*}$. They are all principal so that $L$ is a Noetherian UFR. Furthermore, it follows from Lemma~\ref{L:ql:h1p} that $L$ is a Noetherian UFD except if $r$ is a primitive root of unity of order $l\geq 2$.

If $f$ is not a monomial,  then we consider two cases:

\textit{Case 1:} $r$ is a primitive $l$-th root of unity. In this case, $h^{l}-\lambda^{l}$ annihilates $L_{\lambda}$, so $h^{l}-\lambda^{l}\in Q_{\lambda}$. It follows that $Q_{\lambda}$ is not a height one prime. So, as above, the only height one primes of $L$ are $\langle h\rangle$ and  $\langle h^{l}-\lambda\rangle$ with $\lambda\in\K^{*}$, whence the final statement follows.

\textit{Case 2:} $r$ is not a root of unity. As $f$ is not a monomial, there is $\eta\in\K^{*}$ such that $P_{1}(\eta)=f(\eta)=0$. Assume $Q_{\eta}$ does not have height 1. Then it properly contains a nonzero prime ideal $Q$. This ideal $Q$ cannot be of the form $Q_{\lambda}$, as these ideals are maximal, hence either $Q$ does not contain a power of $d$ or $Q$ does not contain a power of $u$. By the first part of our argument, as $r$ is not a root of unity, $Q$ must contain $h$. In particular, $h$ annihilates the module $L_{\eta}$, which is a contradiction, as $\eta\neq 0$. Thus, $Q_{\eta}$ indeed has height one and is not principal, so $L$ is not a Noetherian UFR in this case.
\end{proof}


\section{The case $f=0$}\label{S:f0}

We will consider separately the cases $\gamma=0$ and $\gamma\neq 0$.

\subsection{The case $f=0$ and $\gamma=0$}\label{SS:f0:g0}

In this case, the defining relations of $L=L(0,r,s,\gamma)$ are:
\begin{equation*}
dh=rhd, \quad hu=ruh, \quad du=sud,
\end{equation*}
and $L$ is the so-called \textit{quantum coordinate ring of affine $3$-space} over $\K$. The normal elements $d$, $u$ and $h$ generate pairwise distinct completely prime ideals so, by~\cite[Prop.\ 1.6]{LLR06}, it is enough to show that the localisation $T$ of $L$ with respect to the Ore set generated by these three elements is a Noetherian UFR. Well, by~\cite[1.3(i) and Cor.\ 1.5]{GL98}, the height one prime ideals of $T$ are generated by a single central element, so $T$ is a Noetherian UFR. Thus, $L$ is a Noetherian UFR. We record this result below.

\begin{proposition}\label{P:f0:g0:UFR}
Assume $f=0$ and $\gamma= 0$. Then $L(0,r,s,0)$ is a Noetherian UFR.
\end{proposition}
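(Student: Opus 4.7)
The plan is to follow the approach already sketched in the paragraph preceding the proposition: exhibit $d$, $u$, $h$ as normal elements generating completely prime ideals, then invoke a noncommutative Nagata-type reduction to a localisation that is well understood, and finally quote the structure theory of quantum tori.

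First, I would make explicit that with $f=0$ and $\gamma=0$, the three defining relations become the skew-commutation relations $dh=rhd$, $hu=ruh$, $du=sud$, so that $L=L(0,r,s,0)$ is the quantum coordinate ring of affine $3$-space with parameters determined by $r$ and $s$. From these relations it is immediate that each of $d$, $u$, $h$ is a normal element. To check that the principal ideals $\langle d\rangle$, $\langle u\rangle$, $\langle h\rangle$ are pairwise distinct and completely prime, I would compute the quotients: $L/\langle h\rangle$ is the quantum plane $\K_{s}[d,u]$ (or a classical polynomial ring if $s=1$); $L/\langle d\rangle$ and $L/\langle u\rangle$ are, respectively, skew polynomial rings $\K_{r}[u,h]$ and $\K_{r^{-1}}[d,h]$. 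All three quotients are domains, confirming that $\langle d\rangle$, $\langle u\rangle$, $\langle h\rangle$ are completely prime, and they are distinct because the generators are $\mathbb{Z}$-graded of different degrees under~(\ref{E:zgrad}).

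Next, I would apply the noncommutative Nagata lemma (the version for Noetherian UFRs, as recalled in Lemma~\ref{nagata}(b) and cited via~\cite[Prop.\ 1.6]{LLR06}) successively to $h$, then to the image of $d$, then to the image of $u$. Each step reduces the problem to verifying the UFR property on a further Ore localisation, and the hypotheses of Nagata's lemma remain satisfied at each step because normality and complete primeness persist under the previous localisations. This reduces the proof to showing that the localisation $T$ of $L$ at the (automatically Ore) multiplicative set generated by $\{d,u,h\}$ is a Noetherian UFR.

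Finally, $T$ is a quantum torus in three variables, so I would cite \cite[1.3(i) and Cor.\ 1.5]{GL98} to conclude that every height one prime of $T$ is generated by a single central element; in particular all height one primes of $T$ are principal, so $T$ is a Noetherian UFR, and Nagata's lemma then gives the same for $L$. The only genuinely delicate point I anticipate is verifying cleanly that the three elements $d$, $u$, $h$ form a regular sequence of normal non-units generating distinct completely prime ideals, so that Nagata's lemma may be applied iteratively; but this is a direct computation using the explicit skew-commutation relations and the $\mathbb{Z}$-grading, so no serious obstacle is expected.
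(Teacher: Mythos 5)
Your proposal is correct and follows essentially the same route as the paper: the paper likewise observes that $d$, $u$, $h$ are normal elements generating pairwise distinct completely prime ideals, reduces via \cite[Prop.\ 1.6]{LLR06} to the localisation $T$ at the Ore set they generate, and concludes from \cite[1.3(i) and Cor.\ 1.5]{GL98} that the height one primes of the quantum torus $T$ are centrally generated. The extra verifications you flag (computing the three quotient algebras, iterating Nagata's lemma) are exactly the details the paper leaves implicit, and they go through as you describe.
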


We conclude this section by studying for which values of $r$ and $s$ the generalized down-up algebra $L(0,r,s,0)$ is a Noetherian UFD.

\begin{proposition}\label{P:f0:g0:UFD}
 Assume $f=0$ and $\gamma= 0$. Then $L=L(0,r,s,0)$ is a Noetherian UFD if and only if $\langle r,s \rangle$ is torsionfree. 
\end{proposition}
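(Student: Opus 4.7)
The plan is to leverage Proposition \ref{P:f0:g0:UFR} --- which already gives that $L$ is a Noetherian UFR --- so that the task reduces to deciding which height one primes of $L$ are completely prime. First I would observe that $\langle d\rangle$, $\langle u\rangle$, $\langle h\rangle$ are height one, principal and completely prime (each quotient being a quantum plane). Applying Lemma \ref{nagata}(c) successively to $d$, $u$, $h$ then reduces the problem to showing that the quantum torus $T:=L[d^{-1},u^{-1},h^{-1}]$ is a Noetherian UFD. A direct calculation shows that $d^{a}h^{b}u^{c}$ is central in $T$ iff $r^{a-c}=s^{a-c}=1$ and $r^{b}s^{c}=1$; let $\Lambda\subseteq\mathbb{Z}^{3}$ denote the sublattice of such exponents. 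The homomorphism $\phi\colon\mathbb{Z}^{3}\to(\K^{*})^{3}$, $(a,b,c)\mapsto(r^{a-c},s^{a-c},r^{b}s^{c})$, has kernel $\Lambda$ and image $\langle(r,s)\rangle\times\langle r,s\rangle$, and one readily checks the key equivalence: $\mathbb{Z}^{3}/\Lambda$ is torsionfree iff $\langle r,s\rangle$ is.

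For the ``if'' direction, assume $\langle r,s\rangle$ is torsionfree, so $\mathbb{Z}^{3}/\Lambda\cong\mathbb{Z}^{k}$ for some $k\in\{0,2,3\}$. The idea is to exploit the $\mathbb{Z}^{3}$-grading of $T$, which descends to a $\mathbb{Z}^{k}$-grading with degree-zero component $Z(T)$; a group-theoretic splitting $\mathbb{Z}^{k}\hookrightarrow\mathbb{Z}^{3}$ (available by freeness) would identify $T$ with a twisted group algebra of $\mathbb{Z}^{k}$ over the commutative Laurent polynomial ring $Z(T)$. The cases $k=0$ (so $T=Z(T)$ is commutative) and $k=3$ (so $Z(T)=\K$ and $T$ is a simple quantum torus) give the UFD property immediately. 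In the remaining case $k=2$, \cite[1.3(i) and Cor.\ 1.5]{GL98} ensures that every height one prime of $T$ is $\langle p\rangle$ for some irreducible $p\in Z(T)$, and I would argue that $T/\langle p\rangle$ inherits a twisted-group-algebra structure of $\mathbb{Z}^{2}$ over the field $Z(T)/\langle p\rangle$, i.e.\ it is a two-variable quantum torus over a field --- hence a domain.

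Conversely, if $\langle r,s\rangle$ has torsion, pick $\mathbf{v}=(a,b,c)\in\mathbb{Z}^{3}\setminus\Lambda$ and $n\geq 2$ with $n\mathbf{v}\in\Lambda$, and set $\xi:=d^{a}h^{b}u^{c}$; then $\xi$ is not central, while $\xi^{n}$ is a scalar multiple of the central monomial $d^{na}h^{nb}u^{nc}$, so $\xi^{n}$ is central. For any $\lambda\in\K^{*}$, choose an irreducible factor $q\in Z(T)$ of $\xi^{n}-\lambda$, so that $\langle q\rangle_{T}$ is a height one prime of $T$. Since $\mu_{1},\dots,\mu_{n}$, the $n$-th roots of $\lambda$, are scalars and commute with $\xi$, one has $\xi^{n}-\lambda=\prod_{i}(\xi-\mu_{i})\in\langle q\rangle_{T}$. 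Complete primeness of $\langle q\rangle_{T}$ would force $\xi-\mu_{i}\in\langle q\rangle_{T}$ for some $i$; but $\langle q\rangle_{T}$ is homogeneous in the $\mathbb{Z}^{3}/\Lambda$-grading (as $q\in Z(T)$), while $\xi-\mu_{i}$ has nonzero components in both the nontrivial degree $\overline{\mathbf{v}}$ and degree $0$ --- a contradiction. Hence $\langle q\rangle_{T}$ is not completely prime, so $L$ is not a UFD.

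The hardest part will be making the twisted-group-algebra identification of $T$ over $Z(T)$ rigorous in the torsionfree case, and verifying that the induced $2$-cocycle descends modulo $\langle p\rangle$ to a bicharacter so that the quotient really is a standard quantum torus (hence a domain). The essential fact is that $H^{2}$ of a free abelian group with values in $\K^{*}$ is represented by alternating bicharacters, which allows coset representatives in $T$ to be chosen so that the quasi-commutation relations survive cleanly to the quotient.
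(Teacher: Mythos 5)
Your proposal is correct, but it takes a genuinely different route from the paper's in both directions. For the ``if'' direction the paper does not pass to the torus at all: it simply cites \cite[Thm.\ 2.1]{GL94}, which says that when $\langle r,s\rangle$ is torsionfree \emph{every} prime of the quantum affine space $L$ is completely prime, and combines this with Proposition~\ref{P:f0:g0:UFR}. Your reduction to $T$ followed by the twisted-group-algebra analysis of $T$ over $\Z(T)$ proves the same thing from scratch; it is more work (the cocycle/bicharacter normalisation you flag is exactly where the effort lies, though it is standard McConnell--Pettit theory) but it is self-contained and your lattice dictionary $\mathbb{Z}^{3}/\Lambda \cong \langle(r,s)\rangle\times\langle r,s\rangle$ checks out. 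For the ``only if'' direction the paper first disposes of the case $r$ a root of unity of order $\geq 2$ inside $L$ itself via Lemma~\ref{L:ql:h1p}, and then computes $\Z(T)$ explicitly in three separate cases using \cite[1.3]{GL94}, exhibiting $u^{\beta}-1$, $(ud)^{\beta}-1$ or $u^{\beta_0}h^{\alpha_0}d^{\beta_0}-1$ as generators of non-completely-prime height one primes; your single construction $\xi^{n}-\lambda$ for a torsion class $\overline{\mathbf{v}}\in\mathbb{Z}^{3}/\Lambda$ handles all these cases uniformly and is arguably cleaner. One point you should nail down: Lemma~\ref{nagata}(c) only gives the implication ``$T$ UFD $\Rightarrow L$ UFD'', so your converse additionally needs ``$L$ UFD $\Rightarrow T$ UFD'' (the paper asserts this, noting that one is localising at $q$-central elements and pointing to Proposition~\ref{L:rnru:iffUFR}); alternatively, contract your prime $\langle q\rangle_{T}$ back to a height one prime $Q$ of $L$ and observe that $T/\langle q\rangle_{T}$ is a localisation of $L/Q$, so $L/Q$ cannot be a domain either.
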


\begin{proof}
As was observed above, $L$  is just the quantum coordinate ring of an affine 3-space. 
So we deduce from~\cite[Thm. 2.1]{GL94} that, if $\langle r,s \rangle$ is torsionfree, then all prime ideals of $L$ are completely prime. Thus, the result is proved in this case. 

Now assume that $\langle r,s \rangle$ is not torsionfree. 
First, if $r$ is a root of unity of order $l \geq 2$, then the result follows from Lemma~\ref{L:ql:h1p}. 
So we are left with the cases where $r$ is either 1 or not a root of unity. Before distinguishing between different cases, let us describe our strategy to prove that $L$ is not a Noetherian UFD in these cases. 

If $L$ were a Noetherian UFD, then so would be the localisation $T$ of $L$ at the Ore set generated by the normal elements $h$, $d$, $u$.  
(Note that this is due to the fact that we are localising at elements that are ``$q$-central'' - see also Proposition~\ref{L:rnru:iffUFR}.) This localised algebra $T$ is a quantum torus. More precisely, it is  the quantum torus generated by the three indeterminates $h$, $d$ and $u$, and their inverses $h^{-1}$, $d^{-1}$ and $u^{-1}$, subject to the relations
$$dh=rhd,~hu=ruh,~du=sud.$$
Now it follows from \cite{GL94} that extension and contraction provide mutually inverse bijections between the prime spectrum of $T$ and the prime spectrum of the centre $\Z(T)$ of $T$, and that $\Z(T)$ is a (commutative) Laurent polynomial algebra over $\mathbb{K}$. Moreover, we can compute this centre explicitly, using~\cite[1.3]{GL94}. 
So, to prove that $L$ is not a Noetherian UFD in the remaining cases, we will construct a height one prime ideal of $T$ which is not completely prime. This is achieved by computing the centre of $T$ in each case.

We distinguish between three cases:

{\it Case 1: $r=1$ and $s$ is a root of unity of order $\beta \geq 2$.} In this case, we get
$$\Z(T) =\mathbb{K}[u^{\pm\beta}, h^{\pm 1}, d^{\pm\beta}].$$
Hence, $u^\beta -1$ generates a height one prime ideal in $T$ which is not completely prime. \\
 
{\it Case 2: $r$ is not a root of unity and $s$ is a root of unity of order $\beta \geq 2$.}
In this case, we get
$$\Z(T) =\mathbb{K}[(ud)^{\pm\beta}]. $$ 
Hence, $(ud)^\beta -1$ generates a height one prime ideal in $T$ which is not completely prime. \\

{\it Case 3: $r$ and $s$ are not  roots of unity.} Hence, there exists  $(\alpha_0,\beta_0)$ with $\beta_0>0$ minimal such that $r^{\alpha_0}s^{\beta_0}=1$. In this case, we deduce from~\cite[1.3]{GL94} that 
$$\Z(T)=\mathbb{K}[(u^{\beta_0}h^{\alpha_0}d^{\beta_0})^{\pm 1}].$$

Now observe that the fact that  $\langle r,s \rangle$ is not torsionfree imposes that  $\gcd(\alpha_0,\beta_0)>1$. Hence, $u^{\beta_0}h^{\alpha_0}d^{\beta_0} -1$ generates a height one prime ideal in $T$ which is not completely prime. 
\end{proof}


\subsection{The case $f=0$ and $\gamma\neq 0$}\label{SS:f0:gn0}

If $r\neq 1$, then on replacing $h$ by $\tilde{h}=h+\gamma/(1-r)$ we can reduce to the case $\gamma=0$ studied above. So we can assume $r=1$. We can further assume $\gamma=1$, by replacing the generator $h$ by $\gamma^{-1}h$. Let $Q$ be the subalgebra of $L$ generated by $d$ and $u$. Then $Q$ is the quantum plane with relation $du=sud$ and $L=Q[h; \partial]$, where $\partial$ is the derivation of $Q$ determined by $\partial(d)=d$, $\partial(u)=-u$. By the arguments of 
Section~\ref{SS:f0:g0}, $Q$ is a Noetherian UFR. 

\begin{proposition}\label{P:f0:gn0:UFR}
 Assume $f=0$, $r=1$ and $\gamma\neq 0$. Then $L=L(0, 1, s, \gamma)$ is a Noetherian UFR.
\end{proposition}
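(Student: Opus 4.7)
My plan is to reduce, via the Nagata-type Lemma~\ref{nagata}, to showing that a localisation of $L$ is an iterated skew-Laurent extension of a commutative Laurent polynomial algebra, and then invoke the Chatters--Jordan theorem for skew polynomial extensions by automorphisms. After the normalisation in the excerpt, we have $\gamma = 1$ and $L = Q[h;\partial]$ where $Q = \K\langle d,u \mid du = sud\rangle$ is the quantum plane and $\partial(d) = d$, $\partial(u) = -u$. Incidentally, $Q$ is itself a Noetherian UFR: writing $Q = \K[u][d;\sigma]$ with $\sigma(u) = s^{-1}u$ exhibits it as an automorphism extension of the commutative UFD $\K[u]$, to which Chatters--Jordan applies.

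First I would apply Lemma~\ref{nagata} twice. The relation $[h,d] = d$ yields $dh = (h-1)d$; combined with $du = sud$, this shows $d$ is normal in $L$. Moreover the quotient $L/\langle d\rangle \cong \K[h][u;\sigma']$ with $\sigma'(h) = h + 1$ is an Ore extension of a commutative domain, hence a domain, so $\langle d\rangle$ is completely prime. By Lemma~\ref{nagata}(ii), it suffices to prove $L_d = L[d^{-1}]$ is a Noetherian UFR. Inside $L_d$, the element $u$ remains normal (from $uh = (h+1)u$ and $du = sud$) and $\langle u\rangle$ is completely prime by the symmetric argument, so a second application of Lemma~\ref{nagata}(ii) reduces the problem to proving that $L_{du} := L[d^{-1}, u^{-1}]$ is a Noetherian UFR.

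The central observation is that the element $y := ud$ is invertible in $L_{du}$ and commutes with $h$, since $\partial(ud) = -ud + ud = 0$. A direct computation yields $dhd^{-1} = h - 1$ and $dyd^{-1} = sy$, whence
\begin{equation*}
L_{du} \;\cong\; \K[h,\, y^{\pm 1}]\,[d^{\pm 1};\tau],
\end{equation*}
where $\tau$ is the automorphism of the commutative Laurent polynomial algebra $\K[h, y^{\pm 1}]$ defined by $\tau(h) = h - 1$ and $\tau(y) = sy$. Since $\K[h, y^{\pm 1}]$ is a commutative UFD, hence a Noetherian UFR, the Chatters--Jordan theorem for skew polynomial extensions of type $R[x;\sigma]$ gives that $\K[h, y^{\pm 1}][d;\tau]$ is a Noetherian UFR. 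Localising at the normal element $d$, whose corresponding quotient is the commutative domain $\K[h, y^{\pm 1}]$, preserves this property by Lemma~\ref{nagata}(ii), so $L_{du}$ is a Noetherian UFR.

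The main obstacle is rigorously verifying this last step, since it combines Chatters--Jordan for $R[x;\sigma]$ with a further Nagata reduction to the Laurent version. An alternative, more self-contained approach would be to classify the height one primes of $L_{du}$ directly via the $\tau$-stable primes of $\K[h, y^{\pm 1}]$; since $\tau$ has infinite order (because of the shift $h \mapsto h-1$), this classification is uniform in $s$ and proceeds along the same lines as the analysis of quantum tori in Section~\ref{SS:f0:g0}.
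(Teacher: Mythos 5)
Your reductions are sound up to the identification $L_{du}\cong\K[h,y^{\pm 1}][d^{\pm 1};\tau]$: the normality of $d$ and $u$, the complete primeness of the ideals they generate, the first two applications of Lemma~\ref{nagata}(b) (which correctly pass the UFR property \emph{from} a localisation \emph{to} the ring), and the computation $\tau(h)=h-1$, $\tau(y)=sy$ are all correct. The gap is exactly where you suspected it: your final citation of Lemma~\ref{nagata}(b) runs in the wrong direction. That lemma says that if $L_x$ is a Noetherian UFR then so is $L$; it does not say that localising a Noetherian UFR at the powers of a normal element yields a Noetherian UFR. Passing factoriality \emph{up} to a localisation is genuinely delicate in the noncommutative setting, because a normal element of the smaller ring need not stay normal after localisation --- this is precisely the point laboured in the proof of Proposition~\ref{L:rnru:iffUFR}. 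So, as written, the step from ``$\K[h,y^{\pm 1}][d;\tau]$ is a Noetherian UFR'' to ``$L_{du}$ is a Noetherian UFR'' is unjustified.

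The step is repairable without changing your strategy. A height one prime of $T_d$, where $T=\K[h,y^{\pm 1}][d;\tau]$, contracts to a height one prime of $T$ disjoint from $\{d^i\}$; such a prime is of the form $\mathfrak{p}T$ with $\mathfrak{p}$ a height one $\tau$-prime ideal of the commutative UFD $R=\K[h,y^{\pm 1}]$ (here one does use that $\tau$ acts on $\K[h]$ by an infinite-order shift, as you note in your closing remark). Writing $\mathfrak{p}=pR$, $\tau$-stability forces $\tau(p)=cp$ for a unit $c$ of $R$, whence $dp=cpd$ and $p$ remains normal in $T_d$; alternatively, invoke the skew-Laurent version of the Chatters--Jordan theorem directly rather than the polynomial version followed by a localisation. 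For comparison, the paper avoids localisation altogether: it keeps the presentation $L=Q[h;\partial]$ and applies the derivation form of the Chatters--Jordan theorem, \cite[Thm.\ 5.5]{CJ86}, reducing the problem to showing that every nonzero $\partial$-prime ideal of the quantum plane $Q$ contains a nonzero principal $\partial$-ideal; this is done by a minimal-support argument producing a normal $\partial$-eigenvector inside the ideal. Your route trades that computation for two Nagata reductions plus the Laurent-localisation subtlety above.
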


\begin{proof}
Without loss of generality, we assume $\gamma=1$. By~\cite[Thm.\ 5.5]{CJ86}, it is enough to show that every non-zero $\partial$-prime ideal of $Q$ contains a non-zero principal $\partial$-ideal, for the derivation $\partial$ of $Q$ defined above.  
 
Let $0\neq I\leq Q$ be a $\partial$-prime ideal of $Q$. Choose $p=p(d, u)\in I\setminus \{0\}$ with minimal support, i.e.,  $0\neq p=\sum a_{ij} d^iu^j\in I$ such that $a_{ij}\in\K$ and the set $\{ (i, j)\mid a_{ij}\neq 0 \}$ has minimal cardinality. Fix $(\alpha, \beta)$ such that $a_{\alpha \beta}\neq 0$. It is straightforward to check that $\partial (d^iu^j)=(i-j)d^iu^j$, for all nonnegative integers $i, j$. Then,

\begin{align*}
 I\ni (\alpha-\beta)p-\partial (p)&=\sum a_{ij} (\alpha-\beta)d^iu^j-\sum a_{ij} (i-j)d^iu^j\\
 &=\sum a_{ij} (\alpha+j-\beta-i)d^iu^j.
\end{align*}

\noindent
Furthermore, $(\alpha-\beta)p-\partial (p)$ has a smaller support than $p$, as its coefficient of $d^\alpha u^\beta$ is $0$. By the minimality assumption, it must be that $(\alpha-\beta)p-\partial (p)=0$. Thus, $\partial (p)=(\alpha-\beta)p$. In particular, $i-j$ is constant for all $(i, j)$ such that $a_{ij}\neq 0$, and we can write $p=\sum_{i\geq 0}a_i d^iu^{j(i)}$.

Choose $\alpha$ such that $a_\alpha \neq 0$. Then,

\begin{equation*}
up-s^{-\alpha}pu = \sum_{i\geq 0}a_i (s^{-i}-s^{-\alpha}) d^iu^{j(i)+1}
\end{equation*}

\noindent
and this is still an element of $I$, with smaller support than $p$. Thus, $up-s^{-\alpha}pu =0$ and $up=s^{-\alpha}pu$. Similarly, $dp=s^{\beta}pd$ for some $\beta \in \mathbb{Z}$, which shows that $p$ is normal. In particular, $I$ contains the nonzero principal ideal generated by $p$, which is a $\partial$-ideal, as $\partial (p)=\lambda p$ for some integer $\lambda$. This proves our claim.
 \end{proof}

We now inquire when $L$ is a Noetherian UFD. For that purpose, we will determine the height one prime ideals of $L$ explicitly and check which are completely prime. Since $L$ is a Noetherian UFR, we know that all height one prime ideals are principal. So we start out by determining the normal elements of $L$.

Recall the $\mathbb{Z}$-graduation of $L$ described in~(\ref{E:zgrad}).

\begin{lemma}\label{L:f0:gn0:nor}
Assume $f=0$, $r=1$ and $\gamma\neq 0$. Let $L=L(0, 1, s, \gamma)$ and consider the normal element $z=ud$. 
The normal elements of $L$ are homogeneous and of the form $p(z)u^i$ or $p(z)d^i$, for $i\in\mathbb{N}$ and $p(z)\in\K[z]$. Furthermore:
\begin{enumerate}
\item If $s$ is not a root of unity, then $p(z)=\lambda z^c$ for some $\lambda\in\K$ and some  integer $c\geq 0$;
\item If $s$ is a primitive $l$-th root of unity ($l\geq 1$), then $p(z)=z^c \tilde{p}(z^l)$ for some integer $0\leq c<l$ and some polynomial $\tilde{p}(z^l)$ in the central element $z^l$.
\end{enumerate}

\end{lemma}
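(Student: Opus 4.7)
The plan is to prove the three claims in sequence, exploiting the $\mathbb{Z}$-grading of $L$ and the presentation $L = Q[h; \partial]$ with $\partial(u) = -u$ and $\partial(d) = d$, which makes the grading ``inner'' with respect to $h$: a direct computation yields $[x, h] = nx$ for every $x \in L_n$.

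First, I show every normal element $\xi$ is homogeneous. Normality produces a unique $h' \in L$ with $\xi h = h' \xi$. Since $h \in L_0$ and $L$ is a domain, comparing the minimum and maximum homogeneous degrees of $\xi h$ and $h' \xi$ with those of $\xi$ forces $h' \in L_0$. Setting $\eta = h' - h \in L_0$ and equating $\eta \xi = \xi h - h \xi = [\xi, h] = \sum n\, \xi_n$ in each degree yields $(\eta - n)\xi_n = 0$, so the domain property forces at most one $\xi_n$ to be nonzero.

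Next, assume $\xi \in L_n$ with $n \geq 0$. Using $L_n = \K[h, z] u^n$, write $\xi = q(h, z) u^n$ with $q \in \K[h, z]$. The same degree-range argument applied to the normalising identity $\xi d = c \xi$ places $c$ in $L_{-1} = \K[h, z] d$; writing $c = r(h, z) d$ and using the commutation relations $dh = (h-1)d$, $dz = s z d$, and $du^n = s^n u^n d$, the equation reduces to the polynomial identity $q(h, z) = s^n\, r(h, z)\, q(h-1, s z)$ in $\K[h, z]$. Since both sides have the same degree in $h$ and in $z$, the factor $s^n r(h, z)$ must be a nonzero scalar $\mu$. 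Expanding $q = \sum_j q_j(h)\, z^j$ gives $q_j(h) = \mu s^j\, q_j(h-1)$ for each $j$; matching leading $h$-coefficients forces $\mu s^j = 1$ whenever $q_j \neq 0$, and then $q_j(h) = q_j(h-1)$ implies $q_j$ is constant. Hence $q$ has no $h$-dependence and $\xi = p(z) u^n$ with $p \in \K[z]$. The case $n < 0$ is symmetric, using $\xi u = c \xi$ and the representation $\xi = q(h, z) d^{-n}$.

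The constraint $s^j = \mu^{-1}$ on the nonzero coefficients of $p(z) = \sum_j a_j z^j$ then yields directly (a) a monomial $p(z) = \lambda z^c$ when $s$ has infinite order, and (b) $p(z) = z^c\, \tilde{p}(z^l)$ with $0 \leq c < l$ when $s$ is a primitive $l$-th root of unity, where the centrality of $z^l$ follows from $u z^l = s^{-l} z^l u = z^l u$, $d z^l = s^l z^l d = z^l d$, and $[h, z] = 0$. (Conversely, every such element is readily checked to be normal using the same commutation relations.) The main obstacle is the degree-range reduction that places $h'$ and $c$ in single homogeneous components of $L$: the domain property does genuine work there, since without it the functional equation for $q$ would acquire spurious $h$- and $z$-dependence through $r(h, z)$ and the sharp form of $p(z)$ would be lost.
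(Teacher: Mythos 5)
Your proof is correct and follows essentially the same route as the paper's: homogeneity is forced by commutation with $h$ via the $\mathbb{Z}$-grading and the domain property, and the shape $p(z)u^n$ together with the support condition $s^j=\mu^{-1}$ is extracted from a single normalising identity on $\K[h,z]u^n$ (you commute with $d$ where the paper factors out $u^n$ and commutes with $u$, but the resulting functional equation and its analysis are the same). The implicit normalisation $\gamma=1$ via the presentation $L=Q[h;\partial]$ is harmless and matches the paper's own reduction.
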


\begin{proof}
We again assume $\gamma=1$.
Let $\nu=\sum_{i\in\mathbb{Z}} \nu_i$ be a nonzero normal element of $L$, with $\nu_i$ homogeneous of degree $i$. Notice that $hu^i d^j=u^i d^j (h + j-i)$. In particular, $h \nu_i=\nu_i (h-i)$ for all $i\in\mathbb{Z}$. By degree considerations, there is $\xi (h, z)\in\K[h, z]$ such that $h\nu=\nu \xi (h, z)$. Thus, 

\begin{equation*}
 \sum \nu_i \xi (h, z)= h\sum \nu_i =\sum \nu_i (h-i).
 \end{equation*}

\noindent
Therefore, equating  homogeneous components and factoring out the nonzero $\nu_i$, we get that $h-i=\xi (h, z)$ for all $i$ such that $\nu_i\neq 0$, which proves that $\nu$ is homogeneous.

Assume $\nu\neq 0$ has degree $i\geq 0$. We can write $\nu=p(h, z)u^i$ (see (\ref{E:zgrad:homcomp})). As $u^i d^j$ is clearly normal, for all $i, j\in \mathbb{N}$, and  $L$ is a domain, $p(h, z)u^i$ is normal if and only if $p(h, z)$ is normal. Write $p(h, z)=\sum_{j\geq 0}p_j(h)z^j$, with $p_j (h)\in\K[h]$. As before, there must exist $\xi (h, z)\in\K[h, z]$ such that $up(h, z)=p(h, z) \xi (h, z)u$. Using the commutation relations $up_j (h)=p_j(h+1)u$ and $uz^j=s^{-j}z^j u$, and factoring out $u$ on the right from both terms of that equation, we obtain 

\begin{equation*}
 \sum_{j\geq 0}p_j(h+1)s^{-j}z^j=\xi (h, z)\sum_{j\geq 0}p_j(h)z^j.
\end{equation*}
\noindent
From the above equation we readily conclude that $\xi (h, z)=\xi\in\K$, as we are assuming $p(h, z)\neq 0$. Next, equating coefficients of $z^j$, we get $p_j(h+1)s^{-j}=\xi p_j(h)$ for all $j$. This implies that $p_j(h)$ is a constant polynomial, for all $j$. Thus, we conclude that $\nu=p(z)u^i$, for some $p(z)\in\K[z]$. The case $i\leq 0$ is symmetric.

It remains to determine when  a nonzero element $p(z)\in\K[z]$ is normal. Write $p(z)=\sum_i a_i z^i$, with $a_i\in\K$. Since $up(z)=\sum_i s^{-i}a_i z^i u$, it is easy to conclude that $p(z)$ is normal if and only if there is $\lambda\in\K$ such that $s^{-i}=\lambda$ for all $i$ such that $a_i\neq 0$. Let $c\geq 0$ be the first index for which $a_c\neq 0$. It follows that $p(z)=a_c z^c$ if $s$ is not a root of unity. In case $s$ is a primitive $l$-th root of unity, with $l\geq 1$, then $p(z)=z^c p'(z^l)$, where $p'(z^l)$ is a polynomial in $z^l$, and the result follows.

\end{proof}

We can now list all height one prime ideals of $L$ and check when $L$ is a Noetherian UFD.

\begin{theorem}\label{T:f0:gn0:UFD}
Assume $f=0$, $r=1$ and $\gamma\neq 0$. Let $L=L(0, 1, s, \gamma)$ and  $z=ud$. Then $L$ is a Noetherian UFD if and only if either $s$ is not a root of unity or $s=1$. The height one prime ideals of $L$ are:
\begin{enumerate}
\item $\langle d\rangle$ and $\langle u\rangle$, if $s$ is not a root of unity. These ideals are completely prime.
\item $\langle d\rangle$, $\langle u\rangle$ and $\langle z-\lambda\rangle$, for $\lambda\in\K^*$, if $s=1$. These ideals are completely prime.
\item $\langle d\rangle$, $\langle u\rangle$ and $\langle z^l-\lambda\rangle$, for $\lambda\in\K^*$, if $s$ is a primitive $l$-th root of unity, with $l>1$. The ideals $\langle d\rangle$ and $\langle u\rangle$ are completely prime but those of the form $\langle z^l-\lambda\rangle$ are not.
\end{enumerate}
\end{theorem}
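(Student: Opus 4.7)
The plan is to leverage Proposition~\ref{P:f0:gn0:UFR}, which gives that $L$ is a Noetherian UFR; hence every height one prime is principal and generated by a normal element, so finding the height one spectrum reduces to classifying the \emph{irreducible} normal elements (those admitting no nontrivial normal factorisation). The UFD question then reduces to checking complete primality of each such principal prime.

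\textbf{Step 1 (irreducible normals).} Starting from Lemma~\ref{L:f0:gn0:nor} and using the commutation identities $z^c = s^{\binom{c}{2}} u^c d^c$ and $d^a u^b = s^{ab} u^b d^a$, I rewrite every nonzero normal element, up to a scalar, in the form $u^a d^b q$ where $a,b \in \mathbb{N}$ and $q$ is: a scalar in case (a); a polynomial in $z$ in case (b); a polynomial in $z^l$ in case (c). Each of $u$, $d$, and (in cases (b) and (c)) $z^l - \lambda$ is normal, and a direct check of the relations shows $u^a d^b q$ is normal. Whenever $a \geq 2$, $b \geq 2$, both $a \geq 1$ and $b \geq 1$, or $q$ has a nontrivial factorisation in the commutative ring $\K[z^l]$, the element $u^a d^b q$ admits a nontrivial normal factorisation; moreover $\lambda z^l = \lambda s^{\binom{l}{2}} u^l d^l$ itself factors. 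The only irreducible normals (up to scalar) are therefore $u$, $d$, and in cases (b) and (c) also $z^l - \lambda$ with $\lambda \in \K^*$, yielding the claimed list of height one primes.

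\textbf{Step 2 (complete primality).} The quotient $L/\langle u \rangle$ has presentation $\K\langle h, d \mid dh = hd - d\rangle$, the enveloping algebra of the $2$-dimensional non-abelian solvable Lie algebra, hence a domain; symmetrically for $\langle d \rangle$. In case (b) the relation $ud = du = \lambda \in \K^*$ in $L/\langle z - \lambda \rangle$ makes $u$ two-sided invertible with inverse $\lambda^{-1}d$, and the quotient is the Ore localisation of $\K\langle h, u \mid hu = u(h-1)\rangle$ at powers of $u$, a domain. In case (c) with $l \geq 2$, write $\lambda = \mu^l$ using that $\K$ is algebraically closed; since scalars commute with $z$, the polynomial identity $z^l - \lambda = \prod_{i=0}^{l-1}(z - \mu s^i)$ holds in $\K[z] \subset L$. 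In the quotient, this product vanishes, while each factor is nonzero by the intersection argument below, producing nontrivial zero divisors.

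\textbf{Supporting fact and main obstacle.} The delicate point is verifying $\K[z] \cap \langle z^l - \lambda \rangle = (z^l - \lambda)\K[z]$, which justifies that each $\overline{z - \mu s^i}$ is nonzero in the quotient. I would deduce this from the $\mathbb{Z}$-grading~(\ref{E:zgrad}): $z^l - \lambda$ is central of grading-degree $0$, so $\langle z^l - \lambda\rangle = (z^l - \lambda)L$ is graded, and its degree-zero component equals $(z^l - \lambda)L_0$. A direct computation (using $hu = u(h-1)$ and $hd = d(h+1)$) gives $hz = zh$, so $L_0 = \K[h, z]$ is a commutative polynomial algebra; since $\K[h, z] = \K[z][h]$, the intersection of $(z^l - \lambda)\K[h,z]$ with $\K[z]$ is precisely $(z^l - \lambda)\K[z]$. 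This produces an embedding $\K[z]/(z^l - \lambda) \hookrightarrow L/\langle z^l - \lambda\rangle$, and the left-hand side is isomorphic to $\K^l$ by the Chinese Remainder Theorem, supplying the desired zero divisors and completing the classification.
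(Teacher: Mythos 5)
Your overall strategy coincides with the paper's: use Proposition~\ref{P:f0:gn0:UFR} to reduce to principal ideals generated by normal elements, classify the candidates via Lemma~\ref{L:f0:gn0:nor}, and test complete primality of each quotient. Your treatment of cases (a) and (b) matches the paper's, and your grading computation showing $\K[z]\cap\langle z^l-\lambda\rangle=(z^l-\lambda)\K[z]$ is a welcome explicit justification of the paper's terser claim that $z^l-\lambda$ ``factors nontrivially'' modulo the ideal it generates.

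There is, however, a genuine gap in case (c): you never prove that $\langle z^l-\lambda\rangle$ is a \emph{prime} ideal of $L$ when $l\geq 2$. Exhibiting zero divisors in $L/\langle z^l-\lambda\rangle$ shows the ideal is not completely prime, but the theorem asserts these ideals are height one primes, and this is exactly what forces $L$ not to be a UFD: if they were not prime, the only height one primes would be $\langle u\rangle$ and $\langle d\rangle$, both completely prime, and $L$ would then be a Noetherian UFD, contradicting the statement. Your Step 1 silently assumes that an irreducible normal element generates a height one prime; that implication is not automatic and is the missing content (in cases (a) and (b) it is harmless only because you separately verify that each quotient is a domain). It can be supplied in two ways. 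The paper's route: present $L=Q[h;\partial]$ with $Q$ the quantum plane, note that $\Z(Q)=\K[d^l,u^l]$ by~\cite[1.3(i)]{GL98} and that $z^l-\lambda$, being a scalar multiple of $u^ld^l-s^{-\binom{l}{2}}\lambda$, is irreducible there, lift primality from $\Z(Q)$ to $Q$ by~\cite[Cor.\ 1.5]{GL98}, and then to $L$ by~\cite[Prop.\ 14.2.5]{MR01} using $\partial(z^l-\lambda)=0$. Alternatively, one can first prove that in a Noetherian UFR every nonzero nonunit normal element is a product of normal elements each generating a height one prime (a short Noetherian induction from Definition~\ref{def-UFR}, normality of the cofactor, and the Principal Ideal Theorem, in the spirit of~\cite{CJ86}); with that in hand your irreducibility analysis does yield primality. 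Either patch completes your argument; without one of them, case (c) is unproved.
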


\begin{proof}
Once more, we assume $\gamma=1$.
Let $P$ be a height one prime ideal of $L$. By Proposition~\ref{P:f0:gn0:UFR}, there exists a normal element $\nu$ such that $P=\langle \nu \rangle$. Furthermore, as $P$ is prime, $\nu$ cannot be the product of two nonzero, nonunit  normal elements. Thus, by Lemma~\ref{L:f0:gn0:nor}, the only possibilities are, up to nonzero scalars, $\nu=u$, $\nu=d$ or $\nu=p(z)$. The ideals generated by either $u$ or $d$ are indeed completely prime, as the corresponding factor algebra is isomorphic to the enveloping algebra of the two-dimensional nonabelian Lie algebra. By the Principal Ideal Theorem, they have height one. So it remains to consider the case $\nu=p(z)$. Note first that $z=ud$, and since $u$ and $d$ are nonzero nonunit normal elements, the ideal generated by $z$ is not prime.

Assume first that $s$ is not a root of unity. Then, by Lemma~\ref{L:f0:gn0:nor}(a) and the above, there is no other possibility for $P$. This proves part (a). Now assume $s$ is a primitive $l$-th root of unity, with $l\geq 1$. Since $\K$ is algebraically closed, the only other possibility for the generator $\nu$ of $P$ is $\nu=z^l-\lambda$, for some $\lambda\in\K^{*}$. If $l=1$, i.e., in case $s=1$, then $\langle z-\lambda\rangle$ is completely prime, for $\lambda \neq 0$, as the factor algebra is isomorphic to the differential operator ring $\K[u^{\pm 1}][h;\partial]$, where $\partial (u^{i})=-iu^{i}$, for all $i\in\mathbb{Z}$. This proves part (b). 

Finally, assume $l>1$. Recall that $L$ can be presented as the differential operator ring $Q[h; \partial]$, where  $Q$ is the quantum plane with relation $du=sud$ and  $\partial$ is the derivation of $Q$ determined by $\partial(d)=d$, $\partial(u)=-u$. The centre of $Q$ is the polynomial algebra $\K[d^l, u^l]$, by~\cite[1.3(i)]{GL98}, and the element $z^l-\lambda$ is  irreducible  in this polynomial algebra. Thus, $z^l-\lambda$ generates a prime ideal of $\K[d^l, u^l]$. By~\cite[Cor.\ 1.5]{GL98}, $z^l-\lambda$ also generates a prime ideal of $Q$. Furthermore, $\partial (z^l-\lambda)=0$, as $\partial (z)=0$. Hence, by~\cite[Prop.\ 14.2.5]{MR01}, $z^l-\lambda$ generates a prime ideal of $L$. This ideal has height one, by the Principal Ideal Theorem. Since $l>1$ and $\K$ is algebraically closed, $z^l-\lambda$ factors nontrivially as a polynomial in $z$, so the ideal $\langle z^l-\lambda\rangle$ is not completely prime, which proves part (c).
\end{proof}


\section{The case $f$ conformal and $r$ not a root of unity}\label{S:rnru}

In this case, as $r\neq 1$, we can and will assume that $\gamma=0$. Thus, the defining relations for $L=L(f, r, s, 0)$ are:
\begin{align}\label{E:g0:defGdua1}
dh-rhd&=0,\\\label{E:g0:defGdua2}
hu-ruh&=0,\\\label{E:g0:defGdua3}
du-sud+f(h)&=0.
\end{align}

\noindent
In particular, $h$ is a nonzero, nonunit normal element of $L$ which generates a completely prime ideal of $L$. 

Let $L_h$ be the localisation of $L$ with respect to the powers of $h$. It is clear that 
\begin{equation*}
 L_h=\K[h^{\pm 1}][d;\sigma][u; \sigma^{-1}, \delta],
\end{equation*}
where $\sigma$ and $\delta$ are extended by setting $\sigma(h^{-1})=r^{-1}h^{-1}$ and $\delta (h^{-1})=0$.

\begin{proposition}\label{L:rnru:iffUFR}
Assume $\gamma =0$. Then $L_h$ is a Noetherian UFR (resp.\ UFD) if and only if $L$ is a Noetherian UFR (resp.\ UFD). 
\end{proposition}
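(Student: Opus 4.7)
My plan is to combine Nagata's Lemma with the standard Ore-localisation bijection between primes of $L_h$ and primes of $L$ that are disjoint from the powers of $h$. The \emph{if} direction will be an immediate appeal to Lemma~\ref{nagata}: since $\gamma=0$, the element $h$ is a nonzero, nonunit normal element of $L$, and by Lemma~\ref{L:ql:h1p} it generates a completely prime ideal, so whenever $L_h$ is a Noetherian UFR (resp.\ UFD), so is $L$.

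For the \emph{only if} direction, I would assume $L$ is a Noetherian UFR. Localisation preserves primeness and the Noetherian property, so $L_h$ is at once prime Noetherian. To verify the second axiom in Definition~\ref{def-UFR}, I would take a nonzero prime $P$ of $L_h$ and consider the contracted prime $P\cap L$ of $L$, which does not contain $h$. Since $L$ has finite Gelfand--Kirillov dimension, it satisfies DCC on primes (as noted in the text), so $P\cap L$ contains a height one prime $P_0$ of $L$, necessarily avoiding $h$. Because $L$ is a UFR, I would write $P_0=\langle p\rangle$ for some normal element $p\in L$; the image of $p$ is still normal in $L_h$, and the Ore-localisation correspondence then gives that $P_0L_h=\langle p\rangle$ is a principal prime ideal of $L_h$ contained in $P$, as required.

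To deal with the UFD statement, assuming additionally that $L$ is a Noetherian UFD, I would invoke the same correspondence the other way: every height one prime of $L_h$ is of the form $Q_0L_h$ for a unique height one prime $Q_0$ of $L$ with $h\notin Q_0$. Then $L_h/Q_0L_h$ is the localisation of the domain $L/Q_0$ at the (normal) image of $h$, so it is again a domain. Hence every height one prime of $L_h$ is completely prime, which settles the UFD case.

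The argument is essentially bookkeeping once the Ore-localisation correspondence is at hand. The only point that asks for a moment of care is the preservation of heights under extension and contraction of primes disjoint from the Ore set $\{h^i\}_{i\geq 0}$, which is standard for Ore localisation in prime Noetherian rings. I do not anticipate any real obstacle beyond invoking the correct general facts.
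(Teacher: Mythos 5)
Your overall architecture coincides with the paper's: the implication ``$L_h$ UFR/UFD $\Rightarrow$ $L$ UFR/UFD'' is exactly Lemma~\ref{nagata} applied to the normal element $h$ (which generates a completely prime ideal by Lemma~\ref{L:ql:h1p}), and the converse is meant to come from the standard extension/contraction correspondence for the Ore set $\{h^i\}_{i\geq 0}$. The bookkeeping with heights and with $L_h/Q_0L_h$ being a localisation of the domain $L/Q_0$ is fine.

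However, there is a genuine gap at the step ``the image of $p$ is still normal in $L_h$.'' This is not a standard fact about Ore localisation, and it is precisely the point to which the paper devotes its entire proof. If $p$ is normal in $L$ with associated automorphism $\tau$ (so $pa=\tau(a)p$), then $ph=\tau(h)p$ gives $ph^{-1}=\tau(h)^{-1}p$ only if $\tau(h)$ becomes a unit in $L_h$; normality of $p$ in $L$ alone gives no control over $\tau(h)$, and in general a normal element need not remain normal after inverting an Ore set. The paper closes this gap using the $\mathbb{Z}$-grading of~(\ref{E:zgrad}): writing a normal element $\nu=\sum_i\nu_i$ in homogeneous components, one has $h\nu_i=r^i\nu_i h$, and comparing this with $h\nu=\nu h'$ forces $h'=r^{\alpha}h$ for any $\alpha$ with $\nu_\alpha\neq 0$; hence $h$ and $\nu$ $q$-commute and $\nu$ is indeed normal in $L_h$. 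Your proof needs this (or an equivalent) argument inserted before the conclusion that $P_0L_h=\langle p\rangle$ is a \emph{principal} (i.e., normally generated) prime of $L_h$; without it the converse direction does not go through.
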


\begin{proof}
The direct implication follows from Lemma~\ref{nagata}. The converse follows from standard arguments in localisation theory, provided we can show that any normal element of $L$ is still normal in $L_h$.

Let $\nu\in L$ be normal. We can assume $\nu\neq 0$. Write $\nu=\sum_{i\in \mathbb{Z}} \nu_i$ with $\nu_i$ homogeneous of degree $i$. As 
$h\nu_i=\nu_i \sigma^i (h)=r^i\nu_i h$, it follows that $h\nu=\sum_{i\in \mathbb{Z}}r^i\nu_i h$. On the other hand, by the normality of $\nu$, there is $h'\in L$ such that $h\nu=\nu h'$. The $\mathbb{Z}$-grading implies that $h'$ has degree $0$. Hence, the degree $i$ component of $\nu h'$ is $\nu_i h'$. Equating elements of the same degree we conclude that $r^i\nu_i h=\nu_i h'$, for all $i$. Choose $\alpha$ with $\nu_{\alpha}\neq 0$. We must have $h'=r^{\alpha}h$ and  thus $h\nu=r^{\alpha}\nu h$. Hence, in $L_h$, we have $\nu h^{-1}=r^{\alpha}h^{-1}\nu$, which is enough to show that $\nu$ is normal in $L_h$. This concludes the proof.
\end{proof}

Let $g\in\K[h]$ be such that $f=g^*$, i.e., $f(h)=sg(h)-g(rh)$. We will assume $f\neq 0$, as the case $f=0$ has already been dealt with in Section~\ref{S:f0}. In particular, $g\neq 0$.

The algebra $L_h$ is in the scope of the algebras studied by Jordan in~\cite{dJ93}, where our polynomial $g$ plays the role of the element $u$ of~\cite{dJ93}. We will start out with a few technical observations which will allow us to apply the results of~\cite{dJ93} to $L_h$. We will remind the reader of the necessary definitions as they are needed.

\begin{lemma}\label{L:rnru:fc:ssimple} Assume $\gamma=0$ and $r$ is not a root of unity. Then
the Laurent polynomial algebra $\K[h^{\pm 1}]$ is $\sigma$-simple, i.e., its only $\sigma$-invariant ideals are itself and  the zero ideal.  
\end{lemma}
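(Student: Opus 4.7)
The plan is to argue directly using that $\K[h^{\pm 1}]$ is a principal ideal domain and that $\sigma$ acts as $h \mapsto rh$ (since $\gamma=0$). So let $I$ be a nonzero $\sigma$-invariant ideal of $\K[h^{\pm 1}]$; I want to show $I = \K[h^{\pm 1}]$, and it suffices to produce a unit (i.e.\ a nonzero monomial $\lambda h^k$) inside $I$.

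First I would normalize: given any nonzero $p \in I$, after multiplying by a suitable power of $h$ (which is a unit of $\K[h^{\pm 1}]$, so keeps us inside $I$), I may assume $p \in \K[h]$, and choose $p \in I \cap \K[h]$ of minimal degree, say $p(h) = a_0 + a_1 h + \cdots + a_n h^n$ with $a_n \neq 0$. The crucial observation is that $\sigma$ preserves the degree in $h$ and scales the leading term by $r^n$: indeed $\sigma(p) = p(rh) = \sum_{i=0}^{n} a_i r^i h^i \in I$. Therefore the combination
\[
\sigma(p) - r^n p \;=\; \sum_{i=0}^{n-1} a_i (r^i - r^n)\, h^i
\]
lies in $I$ and has degree strictly less than $n$.

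By minimality of $n$, this combination must be zero, so $a_i(r^i - r^n) = 0$ for each $i < n$. Here is where the hypothesis that $r$ is not a root of unity is used: $r^i = r^n$ would force $r^{n-i} = 1$, impossible for $i < n$. Hence every $a_i$ with $i < n$ vanishes, so $p = a_n h^n$ is a unit of $\K[h^{\pm 1}]$, giving $I = \K[h^{\pm 1}]$. The argument is entirely routine; the only point that requires the hypothesis on $r$ is the nonvanishing of $r^i - r^n$, so there is no real obstacle to overcome beyond the minimal-degree trick.
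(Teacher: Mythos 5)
Your argument is correct: since $h$ is a unit in $\K[h^{\pm 1}]$, every nonzero $\sigma$-invariant ideal meets $\K[h]$ nontrivially, and your minimal-degree element $p$ must satisfy $\sigma(p)=r^np$, which forces $p$ to be a monomial (hence a unit) precisely because $r$ is not a root of unity. The only point worth noting is that the paper does not actually prove this lemma at all --- it simply cites Example 1.2(i) of Jordan's paper \cite{dJ93}, where the same computation is carried out. So your proposal supplies a short, self-contained verification of a fact the paper outsources; the content is the standard eigenvector argument for the automorphism $h\mapsto rh$ of the Laurent polynomial ring, and there is nothing further to add or repair.
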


\begin{proof}
 This is worked out in Example 1.2.(i) of~\cite{dJ93}.
\end{proof}

We recall Definition 1.7 of~\cite{dJ93}, applied in our context. Suppose there exists $0\neq p\in \K[h^{\pm 1}]$ such that $\sigma (p)=s^{-n}p$ for some positive integer $n$. Let $n\geq 1$ be minimal with respect to the existence of such  an element $p$. Then, any $0\neq p\in \K[h^{\pm 1}]$ satisfying $\sigma (p)=s^{-n}p$ will be called a {\it principal eigenvector}, and $n$ will be its {\it degree}.

In order to discuss the existence of principal eigenvectors, we will make use of two integers  $\epsilon\in\mathbb{Z}$ and $\tau\in\mathbb{N}$, which have been defined in~\cite{CL09}, as follows: 
\begin{equation*}
\tau = \min \{ i>0\mid s^{i}=r^{j}\quad \text{for some $j\in\mathbb{Z}$} \}\quad\quad \text{and}\quad\quad r^{\epsilon}=s^{\tau},
\end{equation*}
if $\{ i>0\mid s^{i}=r^{j}\quad \text{for some $j\in\mathbb{Z}$} \}\neq\emptyset $, and
$\tau= 0=\epsilon$, otherwise.
As long as $r$ is not a root of unity, $\epsilon$ is uniquely defined.
Furthermore, by~\cite[Lem.\ 2.1]{CL09}, if $\delta, \eta\in\mathbb{Z}$ then $r^\delta s^\eta=1$ if and only if there is $\lambda\in\mathbb{Z}$ such that $(\delta, \eta)=\lambda (-\epsilon, \tau)$.

\begin{lemma} \label{L:rnru:fc:peigen} 
Assume $\gamma=0$. 
There exist principal eigenvectors in $L_h$ if and only if $\tau>0$, i.e., if and only if there are integers $\alpha, \beta$, with $\alpha\neq 0$, such that $s^\alpha r^\beta =1$. 
\end{lemma}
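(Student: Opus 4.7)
The plan is to reduce the problem to an elementary computation on Laurent monomials, using the hypothesis of the section that $r$ is not a root of unity.

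First I would unwind the definitions. An eigenvector of $\sigma$ in $\K[h^{\pm 1}]$ with eigenvalue $s^{-n}$ ($n \geq 1$) is a nonzero element $p = \sum_{i \in \mathbb{Z}} c_i h^i$ satisfying $\sigma(p) = s^{-n} p$. Since $\sigma(h^i) = r^i h^i$, applying $\sigma$ term by term and comparing coefficients gives the system of scalar equations $c_i (r^i - s^{-n}) = 0$ for all $i \in \mathbb{Z}$.

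Next I would extract the structural consequence. Because $r$ is not a root of unity, the map $i \mapsto r^i$ is injective on $\mathbb{Z}$, so at most one value of $i$ can satisfy $r^i = s^{-n}$. Hence the support of $p$ is a singleton, that is, $p$ is (up to scalar) a single Laurent monomial $h^{i_0}$, and $i_0$ is characterized by $s^n r^{i_0} = 1$. Consequently, a principal eigenvector exists in $L_h$ if and only if there are integers $n \geq 1$ and $i_0$ satisfying $s^n r^{i_0} = 1$.

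Finally I would translate this condition into the statement of the lemma. If such $n$ and $i_0$ exist, then taking $\alpha = n > 0$ and $\beta = i_0$ gives $s^\alpha r^\beta = 1$ with $\alpha \neq 0$, so the set $\{i > 0 \mid s^i = r^j \text{ for some } j\}$ is nonempty and $\tau > 0$. Conversely, if $\tau > 0$, then by the very definition of $\tau$ and $\epsilon$ recalled just before the lemma, $s^\tau = r^\epsilon$; equivalently $s^\tau r^{-\epsilon} = 1$, so $h^{-\epsilon}$ is a principal eigenvector of degree $\tau$ (its degree is exactly $\tau$ by the minimality in the definition of $\tau$). The reformulation in terms of the existence of some nonzero $\alpha$ and integer $\beta$ with $s^\alpha r^\beta = 1$ is immediate, since one may always replace $(\alpha,\beta)$ by $(-\alpha,-\beta)$ to arrange $\alpha > 0$.

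There is no real obstacle here; the only point worth flagging is the use of the hypothesis that $r$ is not a root of unity, which is what forces the eigenvector to be a monomial and prevents extra cancellations in the Laurent expansion. All other steps are bookkeeping around the definitions of $\tau$ and $\epsilon$.
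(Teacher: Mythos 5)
Your proof is correct and follows essentially the same route as the paper: expand $p$ in Laurent monomials, compare coefficients in $\sigma(p)=s^{-n}p$, and exhibit an explicit monomial eigenvector ($h^{-\epsilon}$, or $h^\beta$ with $s^\alpha r^\beta=1$) for the converse. The only cosmetic difference is that the paper extracts the relation $s^n r^\beta=1$ from the leading coefficient alone, so it never actually invokes the hypothesis that $r$ is not a root of unity, whereas you use that hypothesis to force $p$ to be a single monomial; both arguments are valid here.
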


\begin{proof}
Assume  $s^\alpha r^\beta =1$, with $\alpha\neq 0$. We can thus assume $\alpha\geq 1$. Then $\sigma (h^\beta)=r^\beta h^\beta=s^{-\alpha}h^\beta$, so there are principal eigenvectors. 

Conversely, assume $\sigma (p)=s^{-n}p$ for some $n\geq 1$ and some  $p=\sum_{i=\alpha}^{\beta}a_i h^i$, with $\alpha\leq\beta$ and $a_\alpha a_\beta \neq 0$. Then, $0=\sigma (p)-s^{-n}p=\sum_{i=\alpha}^{\beta}a_i (r^i - s^{-n}) h^i$. In particular, $r^\beta - s^{-n}=0$ and $s^n r^\beta=1$.
\end{proof}

\begin{theorem} \label{T:rnru:fc:UFR}
Assume $\gamma=0$, $r$ is not a root of unity and $f\neq 0$ is conformal. Then $L$ is a Noetherian UFR if and only if either $\tau>0$ or $f$ is a monomial.
\end{theorem}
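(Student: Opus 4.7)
My plan is to reduce the problem, via Proposition~\ref{L:rnru:iffUFR}, to classifying height-one primes of the localisation $L_h = L[h^{-1}]$. Since $\K[h^{\pm 1}]$ is $\sigma$-simple by Lemma~\ref{L:rnru:fc:ssimple}, $L_h$ is an ambiskew polynomial ring over a $\sigma$-simple base, and the conformality hypothesis yields the normal element $z = ud - g(h)$, satisfying $zh = hz$, $dz = szd$ and $zu = suz$. I will apply Jordan's \cite{dJ93} classification of prime ideals in this setting as the main tool.

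For the direction ``$\tau > 0$ or $f$ is a monomial $\Rightarrow L$ is a Noetherian UFR'', I will treat two subcases. When $\tau > 0$, I will check that $w := z^\tau h^{-\epsilon}$ is central in $L_h$ (using $s^\tau r^{-\epsilon} = 1$), so Jordan's theory implies every height-one prime of $L_h$ is either $\langle z\rangle$ or of the form $\langle w - \mu\rangle$ for $\mu \in \K$, all principal. When $f = ah^k$ is a monomial ($a \in \K^*$), I will apply the change of variables $u \mapsto uh^{-k}$, valid in $L_h$, to identify $L_h$ with the corresponding localisation of $L(a, r, sr^{-k}, 0)$, thus reducing to the case in which the polynomial is the nonzero constant $a$; by Lemma~\ref{L:ql:pk}(c), every $P_n$ then remains a nonzero constant, so no ideal $Q_\zeta$ with $\zeta \in \K^*$ is defined, and Jordan's classification forces every height-one prime to be principal.

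For the other direction I will argue by contrapositive: assume $\tau = 0$ and $f$ is not a monomial, so that some $\zeta \in \K^*$ satisfies $f(\zeta) = 0$. Since $P_1(\zeta) = f(\zeta) = 0$, $Q_\zeta$ is defined and, by Theorem~\ref{T:ql}, is a non-principal maximal ideal of $L$ containing $d$, $u$ and $h - \zeta$. The goal is to show that $Q_\zeta$ has height one, yielding the desired non-principal height-one prime. Suppose for contradiction that $0 \subsetneq Q \subsetneq Q_\zeta$ is a prime chain. Since $h$ acts as $\zeta \neq 0$ on $L_\zeta$, we have $h \notin Q_\zeta$, hence $h \notin Q$, and $Q$ extends to a nonzero prime $QL_h$ strictly below $Q_\zeta L_h$. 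Under $\tau = 0$, combining the $\mathbb{Z}$-grading argument of Proposition~\ref{L:rnru:iffUFR} with the multiplicative independence of $r$ and $s$, every normal element of $L_h$ is, up to a scalar, of the form $h^j z^m u^n$ or $h^j z^m d^n$; Jordan's classification then reduces to the statement that the only nonzero non-maximal prime of $L_h$ is $\langle z\rangle$. Hence $QL_h = \langle z\rangle$, forcing $z \in Q_\zeta$; modulo $Q_\zeta$ this gives $0 \equiv z \equiv -g(\zeta)$, so $g(\zeta) = 0$. Then $f(\zeta) = sg(\zeta) - g(r\zeta) = 0$ forces $g(r\zeta) = 0$, and iterating, $g$ vanishes on the infinite set $\{r^n\zeta\}_{n \geq 0}$ (as $r$ is not a root of unity), so $g = 0$ and hence $f = g^* = 0$, contradicting $f \neq 0$.

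The main obstacle will be the careful invocation of Jordan's classification to pin down the non-maximal primes of $L_h$ in each case, particularly when $\tau = 0$: the reduction of any non-maximal prime to $\langle z\rangle$ is the only non-formal step. Once this is in place, the iterated orbit argument extracting a contradiction from $g(\zeta) = 0$ proceeds quickly.
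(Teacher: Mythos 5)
Your reduction to $L_h$ via Proposition~\ref{L:rnru:iffUFR} and your treatment of the forward direction (centrality of $z^\tau h^{-\epsilon}$ when $\tau>0$; the substitution $u\mapsto uh^{-k}$ when $f$ is a monomial) match the paper's route, which simply quotes Jordan's Example 2.21 after localising at $h$. The problem is in your converse. You fix an arbitrary $\zeta\in\K^*$ with $f(\zeta)=0$ and aim to show $Q_\zeta$ has height one, reaching a contradiction from $g(\zeta)=0$. Two things go wrong. First, the iteration is invalid: from $g(\zeta)=0$ and $f(\zeta)=0$ you do get $g(r\zeta)=sg(\zeta)-f(\zeta)=0$, but the next step needs $g(r^2\zeta)=sg(r\zeta)-f(r\zeta)$, and you have no information about $f(r\zeta)$; so you cannot conclude that $g$ vanishes on $\{r^n\zeta\}_{n\ge0}$. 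Second, and more seriously, the statement you are trying to prove is false for some choices of $\zeta$: if $g(\zeta)=0$ then $z\in Q_\zeta$ and $0\subsetneq\langle z\rangle\subsetneq Q_\zeta$ (the inclusion is strict since $d\in Q_\zeta\setminus\langle z\rangle$), so $Q_\zeta$ has height at least two. This situation genuinely occurs: take $g=(h-\zeta)(h-r\zeta)$, so that $f=sg(h)-g(rh)=(h-\zeta)\bigl[(s-r^2)h-r(s-1)\zeta\bigr]$ is nonzero, not a monomial, and has $\zeta$ as a common root with $g$.

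One might hope to repair this by choosing a nonzero root of $f$ that is not a root of $g$, but such a root need not exist: for $g=(h-1)(h-r)(h-r^2)$ and $s=-r(r+1)$ (compatible with $\tau=0$ for suitable $r$), one computes $f=(s-r^3)(h-1)(h-r)^2$, so every nonzero root of $f$ is a root of $g$. In that situation the non-principal height-one prime is a $Q_\zeta$ detected only by some $P_k$ with $k\ge2$ (i.e.\ by $s^kg(\zeta)=g(r^k\zeta)$ with $g(\zeta)\neq0$), not by $P_1=f$. Locating it, and verifying it has height one, is exactly the content of Jordan's classification (\cite[Ex.\ 2.21]{dJ93}) that the paper leans on; your sketch replaces it with the claim that the only nonzero non-maximal prime of $L_h$ is $\langle z\rangle$, which is both unproved and insufficient to identify the correct $\zeta$. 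So the converse direction as written has a genuine gap.
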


\begin{proof}
By Proposition~\ref{L:rnru:iffUFR}, we can work over the localisation $L_h$. Then, the result for $L_h$ follows from Example 2.21 of~\cite{dJ93}.
\end{proof}

It remains to establish when $L$ is a Noetherian UFD, which we do next.

\begin{theorem} \label{T:rnru:fc:UFD}
Assume $\gamma=0$, $r$ is not a root of unity and $f\neq 0$ is conformal. Then $L$ is a Noetherian UFD if and only if either one of the following two conditions holds:
\begin{enumerate}
\item $\langle r, s \rangle$ is a free abelian group of rank $2$ and $f$ is a monomial, or
\item $\langle r, s \rangle$ is a free abelian group of rank $1$.
\end{enumerate}
\end{theorem}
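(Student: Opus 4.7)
The plan is to reduce the problem to the localisation $L_h$ via Proposition~\ref{L:rnru:iffUFR}. The unique height one prime of $L$ that becomes trivial under inverting $h$ is $\langle h\rangle$ itself, and this is completely prime by Lemma~\ref{L:ql:h1p}; hence $L$ is a Noetherian UFD if and only if $L_h$ is.

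For $L_h$, I would invoke the classification of height one primes for ambiskew polynomial rings from \cite[Thm.\ 2.12 and Ex.\ 2.21]{dJ93} which was already used to establish Theorem~\ref{T:rnru:fc:UFR}. Under the standing assumption that $L$ is a Noetherian UFR, every height one prime of $L_h$ is principal, generated by a normal element built either from the conformal element $z = ud - g(h)$ (which satisfies $zh = hz$, $dz = szd$, $zu = suz$) or from principal eigenvectors of $\K[h^{\pm 1}]$ under $\sigma$, which exist exactly when $\tau > 0$ by Lemma~\ref{L:rnru:fc:peigen}. Alternatively, using Lemma~\ref{L:conf:locd} with $p = g$, I would further localise $L_h$ at the powers of the normal element $d$ and identify $L_h D^{-1}$ with the corresponding localisation of $L(0,r,s,0)$; this reduces most of the analysis to the quantum affine $3$-space setting of Section~\ref{SS:f0:g0}.

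For the sufficiency direction, in both cases (a) and (b) the group $\langle r,s\rangle$ is torsionfree, so by Proposition~\ref{P:f0:g0:UFD} the quantum affine $3$-space $L(0,r,s,0)$ is itself a Noetherian UFD. All its height one primes are completely prime, and via the identification above this translates into complete primality of all height one primes of $L_h$ that do not contain $d$. Any remaining height one prime of $L_h$ must contain $d$ and hence equal $\langle d\rangle$ by the Principal Ideal Theorem; in case (a), the assumption that $f$ is a monomial (together with the conformality shift coming from Lemma~\ref{L:conf:locd}) lets one check directly that the associated factor ring is a domain, and in case (b) the analogous check goes through unchanged. Hence $L_h$, and therefore $L$, is a Noetherian UFD in both cases.

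For the necessity direction, assume that $L$ is a Noetherian UFR but neither (a) nor (b) holds. By Theorem~\ref{T:rnru:fc:UFR} we must have $\tau > 0$, and the only remaining possibility is that $\langle r,s\rangle$ has nontrivial torsion (rank one with a nontrivial finite part). Mirroring Case 3 of the proof of Proposition~\ref{P:f0:g0:UFD}, there exist integers $\alpha, \beta$ with $\beta > 0$ minimal such that $r^{\alpha}s^{\beta} = 1$ and $\gcd(\alpha,\beta) > 1$. Passing through Lemma~\ref{L:conf:locd} to identify a localisation of $L_h$ with a localisation of $L(0,r,s,0)$, and then localising further to reach the associated quantum $3$-torus $T$, we obtain a central element $u^{\beta}h^{\alpha}d^{\beta}$ and the element $u^{\beta}h^{\alpha}d^{\beta} - 1$ generates a height one prime of $T$ which fails to be completely prime, exactly as in Proposition~\ref{P:f0:g0:UFD}. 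Contracting back through the localisations gives a non-completely-prime height one prime of $L$, so $L$ is not a Noetherian UFD. The main obstacle will be keeping careful bookkeeping of how height one primes transfer between $L$, $L_h$, $L_h D^{-1}$, and the associated quantum torus, and verifying that the principal eigenvector primes of $L_h$ (which do appear when $\tau > 0$) contribute no further obstruction to being a UFD in the torsionfree rank-one case.
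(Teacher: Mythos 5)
Your reduction to $L_h$ via Proposition~\ref{L:rnru:iffUFR} and your necessity argument are essentially sound: under the isomorphism of Lemma~\ref{L:conf:locd} one has $\tilde{u}\mapsto zd^{-1}$ in the inverse direction, so the central element $\tilde{u}^{\tau}\tilde{h}^{-\epsilon}\tilde{d}^{\tau}$ of the quantum torus pulls back, up to a nonzero scalar, to $h^{-\epsilon}z^{\tau}$; your torsion obstruction therefore recovers exactly the non-completely-prime height one primes $\langle h^{-\epsilon}z^{\tau}-\lambda\rangle$ that the paper obtains from \cite[Thm.\ 2.24]{dJ93}, and height and complete primality do transfer correctly along these Ore localisations.

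The sufficiency direction, however, contains a genuine gap in the treatment of height one primes of $L_h$ meeting the Ore set $\{d^i\}$. You assert that such a prime ``must contain $d$ and hence equal $\langle d\rangle$ by the Principal Ideal Theorem''. Since $f\neq 0$, the element $d$ is \emph{not} normal in $L_h$: indeed $du-sud=-f(h)$, so $\langle d\rangle\ni f(h)$, and when $f$ is a monomial $f(h)$ is a unit in $L_h$, whence $\langle d\rangle=L_h$; when $f$ is not a monomial, $L_h/\langle d\rangle$ is not even a domain. Consequently a prime containing $d^k$ need not contain $d$, $\langle d\rangle$ need not be prime, and the Principal Ideal Theorem does not apply. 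The primes you are missing are those containing a power of $d$ \emph{and} a power of $u$, i.e.\ the non-principal maximal ideals $Q_{\lambda}$ of Theorem~\ref{T:ql}; one must show that in cases (a) and (b) these do not have height one. This can be repaired within your framework: in cases (a) and (b) one checks that $P_k\neq 0$ for all $k>0$ (this uses both that $\langle r,s\rangle$ is torsionfree and that $f\neq 0$), so Theorem~\ref{T:ql} applies; since $L_h$ is already known to be a UFR, its height one primes are principal and hence none of them is a $Q_{\lambda}$, so every height one prime survives in $L_hD^{-1}$ or in the corresponding localisation at powers of $u$, where your quantum-affine-space argument gives complete primality. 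The paper instead reads off the full list of height one primes of $L_h$ from \cite[2.17, Prop.\ 2.18, Cor.\ 2.9]{dJ93} --- namely $\langle z\rangle$ and $\langle h^{-\epsilon}z^{\tau}-\lambda\rangle$ for $\lambda h^{\epsilon}\neq(-g(h))^{\tau}$ --- and must additionally verify the nondegeneracy condition $h^{-\epsilon}(-g(h))^{\tau}\notin\K$, a point your proposal never addresses. As written, the sufficiency half of your proof is incomplete.
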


\begin{proof}
Once again, we can work over the localisation $L_h$, by virtue of Proposition~\ref{L:rnru:iffUFR}. Notice that, since $r$ is not a root of unity, $\langle r, s \rangle$ is a free abelian group of rank $2$ if and only if $\tau=0$, and $\langle r, s \rangle$ is a free abelian group of rank $1$ if and only if $\tau\geq 1$ and $\gcd(\tau, \epsilon)=1$. 

Assume first that $\tau=0$. Then, by the above, $L_h$ is a Noetherian UFR if and only if  $f$ is a monomial. When this is the case, $\langle z:=ud-g(h) \rangle$ is the unique height one prime ideal of $L_h$, and it is clearly completely prime, as the factor algebra is a quantum torus in two variables (namely, the cosets of $u$ and $h$). 

Let us now suppose $\tau\geq 1$. Then, by the proof of Lemma~\ref{L:rnru:fc:peigen}, there is a principal eigenvector, $h^{-\epsilon}$, and it has degree $\tau$. Furthermore, this principal eigenvector is unique, up to nonzero scalar multiples. It follows that $\langle h^{-\epsilon}z^{\tau}-\lambda \rangle$   is a height one prime ideal of $L_h$, for all $\lambda\in\K^*$ such that $\lambda h^\epsilon\neq (-g(h))^\tau$, by~\cite[Cor.\ 2.9.(ii)]{dJ93}. Note that $\lambda h^\epsilon= (-g(h))^\tau$ can occur for at most one value of $\lambda\in\K^*$. By~\cite[Thm.\ 2.24]{dJ93}, it is easy to conclude that, for $\lambda\in\K^*$, $\langle h^{-\epsilon}z^{\tau}-\lambda \rangle$ is completely prime if and only if $\gcd(\tau, \epsilon)=1$. In particular, under the current hypotheses, if  $L_h$ is a Noetherian UFD, then $\langle r, s \rangle$ is free abelian of rank $1$.

Conversely, assume $\gcd(\tau, \epsilon)=1$, with $\tau\geq 1$. Then, by~\cite[2.17 and Prop.\ 2.18]{dJ93}, the height one prime ideals of $L_h$ include $\langle z \rangle$, which is completely prime, and the  ideals of the form $\langle h^{-\epsilon}z^{\tau}-\lambda \rangle$, for  $\lambda\in\K^*$ such that $\lambda h^\epsilon\neq (-g(h))^\tau$, which are all completely prime, as $\gcd(\tau, \epsilon)=1$. In case  $h^{-\epsilon}(-g(h))^\tau\notin\K$, then this is the complete list of height one prime ideals of $L_h$, and it follows that $L_h$ is a Noetherian UFD. Suppose that $h^{-\epsilon}(-g(h))^\tau\in\K$. Then $g(h)$ is a unit, say $g(h)=\mu h^a$, and it follows that $\epsilon=\tau a$. As we are assuming $\tau$ and $\epsilon$ to be coprime, it must be that $\tau=1$ and $\epsilon=a$. Thus, $f(h)=sg(h)-g(rh)=\mu (s-r^a)h^a=\mu (s^\tau-r^\epsilon)h^a=0$, which contradicts our hypothesis on $f$. Therefore, it is always the case that  $h^{-\epsilon}(-g(h))^\tau\notin\K$ and the proof is complete.
\end{proof}


\section{The case $f$ conformal and $r=1$}\label{S:r1}

When $r=1$, we cannot assume that $\gamma=0$, so we will consider separately the cases $\gamma\neq 0$ and $\gamma=0$. The defining relations of $L=L(f, 1, s, \gamma)$ are: 
\begin{align}\label{E:r1:defGdua1}
dh-hd+\gamma d&=0,\\\label{E:r1:defGdua2}
hu-uh+\gamma u&=0,\\\label{E:r1:defGdua3}
du-sud+f(h)&=0.
\end{align}
Note that if $r=1$ and $\gamma\neq 0$ we retrieve the algebras studied by Rueda in~\cite{sR02}. The latter include Smith's algebras~\cite{spS90}, which occur as generalized down-up algebras when $r=s=1$ and $\gamma\neq 0$. We assume throughout that $f\neq 0$.

\subsection{The case $f$ conformal, $r=1$ and $\gamma\neq 0$}\label{SS:r1:gn0}

Let $g$ be such that $f(h)=sg(h)-g(h-\gamma)$. In particular, $g\neq 0$. Recall, from Section~\ref{S:rnru}, the definition of a principal eigenvector.

\begin{lemma}\label{L:r1:gno:eig} 
Assume $r=1$ and $\gamma\neq0$. If $p\in\K[h]$ is such that $\sigma (p)=\mu p$ for some $\mu\in\K$ then $p\in\K$. 
In particular, the only nonzero $\sigma$-invariant ideal of $\K[h]$ is $\K[h]$ and there are principal eigenvectors if and only if $s$ is a root of unity.  
\end{lemma}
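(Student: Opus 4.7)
The key observation is that, when $r=1$, the automorphism $\sigma$ of $\K[h]$ reduces to the shift $\sigma(h)=h-\gamma$, so for any $p\in\K[h]$ we have $\sigma(p)(h)=p(h-\gamma)$. The plan is to exploit this together with degree considerations.

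First I would prove the main claim: if $0\neq p\in\K[h]$ satisfies $\sigma(p)=\mu p$ for some $\mu\in\K$, then $p\in\K$. Write $p=\sum_{i=0}^{n}a_{i}h^{i}$ with $a_{n}\neq 0$. Since $\sigma(p)(h)=p(h-\gamma)$ has the same degree $n$ and the same leading coefficient $a_{n}$ as $p$, the relation $\sigma(p)=\mu p$ forces $\mu=1$, so $p(h)=p(h-\gamma)$. Now assume for a contradiction that $n\geq 1$. The polynomial $p(h)-p(h-\gamma)$ has degree exactly $n-1$: its leading coefficient equals $n\gamma a_{n}$, which is nonzero because $\gamma\neq 0$ and $\mathrm{char}\,\K=0$. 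This contradicts $p(h)=p(h-\gamma)$, so $n=0$ and $p\in\K$.

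From this, the second statement is formal: if $I$ is a nonzero $\sigma$-invariant ideal of $\K[h]$, write $I=\langle p\rangle$ for a monic $p$. Invariance under the automorphism $\sigma$ gives $\sigma(I)=I$, hence $\sigma(p)$ is a monic generator of $I$ and $\sigma(p)=p$. By what was just shown, $p\in\K$, and since $p$ is monic, $p=1$, giving $I=\K[h]$.

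For the final clause, if $0\neq p\in\K[h]$ is a principal eigenvector, i.e.\ $\sigma(p)=s^{-n}p$ for some $n\geq 1$, then the main claim applied with $\mu=s^{-n}$ forces $p\in\K$ and $s^{-n}=1$, so $s$ is a root of unity. Conversely, if $s$ is a root of unity of order $n\geq 1$, then $p=1$ satisfies $\sigma(p)=p=s^{-n}p$, so principal eigenvectors exist. The only mild subtlety is the characteristic-zero input in showing $n\gamma a_{n}\neq 0$, but this is given by the blanket hypothesis on $\K$.
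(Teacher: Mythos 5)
Your proof is correct, but it takes a genuinely different route from the paper's for the main claim. You first pin down $\mu=1$ by comparing leading coefficients (since $\sigma(p)(h)=p(h-\gamma)$ preserves degree and leading coefficient), and then kill the case $\deg p=n\geq 1$ by computing that $p(h)-p(h-\gamma)$ has leading term $n\gamma a_{n}h^{n-1}\neq 0$ in characteristic zero. The paper instead argues via roots: algebraic closedness gives a root $\alpha$ of $p$, the eigenrelation shows $\alpha-\gamma$ is again a root, and iterating produces infinitely many distinct roots (distinctness again using $\mathrm{char}\,\K=0$ and $\gamma\neq 0$), a contradiction. Both arguments lean on characteristic zero; yours dispenses with algebraic closedness entirely and so works over any field of characteristic zero, and it also makes explicit the extra fact that the only possible eigenvalue on a nonzero $p$ is $\mu=1$. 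The paper's version is marginally shorter and fits the root-theoretic style used elsewhere (e.g.\ Proposition~\ref{P:r1:gno:propH}). The remaining two clauses are handled essentially as in the paper; the only cosmetic point is that for the ideal statement you should note that $\sigma$-invariance ($\sigma(I)\subseteq I$) already forces $\sigma(p)$ to be a scalar multiple of the monic generator $p$ by the degree comparison, so no appeal to $\sigma(I)=I$ is needed.
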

\begin{proof}
Suppose that $\sigma (p)=\mu p$ for some $p\in\K[h]\setminus\K$. Then, since $\K$ is algebraically closed, there is $\alpha\in\K$ such that $p(\alpha)=0$. It follows that
$
 0=\mu p(\alpha)
 =\sigma(p)(\alpha)
 =p(\alpha-\gamma),
$
and hence $\alpha-\gamma$ is also a root of $p$. Since $\alpha$ was an arbitrary root of $p$ and $\gamma\neq 0$, this is impossible. Thus, $p\in\K$.

Let $I$ be a  $\sigma$-invariant ideal of $\K[h]$. Then $I=\langle p\rangle$, for some $p\in\K[h]$, and $\sigma(p)\in\K^{*}p$, so either $I=\langle 0\rangle$ or $I=\K[h]$.

Finally, assume there is a principal eigenvector $0\neq p\in\K[h]$. Then there is $n\geq 1$ so that $\sigma (p)=s^{-n}p$. In particular, by the above, it follows that $p\in\K^*$ and $s^n=1$. Conversely, if $s$ is a primitive $n$-th root of unity, then $1$ is a principal eigenvector of degree $n$.
\end{proof}

\begin{proposition}\label{P:r1:gno:propH}
Assume $s$ is not a root of unity and $\gamma\neq 0$. Take $p\in\K[h]$. Then $p$ satisfies
\begin{equation}\label{E:r1:gno:propH}
\forall \lambda\in\K \ \ \forall n\geq 1\quad  s^{n}p(\lambda)=p(\lambda-n \gamma) \implies p(\lambda)=0
\end{equation}
if and only if $p\in\K$.
 \end{proposition}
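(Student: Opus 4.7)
The non-trivial implication is ``$\Rightarrow$''; the converse is essentially automatic. Indeed, if $p=c\in\K$ then $s^{n}p(\lambda)-p(\lambda-n\gamma)=(s^{n}-1)c$, and since $s^{n}\neq 1$ for every $n\geq 1$ (as $s$ is not a root of unity), the equation $s^{n}p(\lambda)=p(\lambda-n\gamma)$ forces $c=0$, whence $p(\lambda)=0$. It remains to show that any $p\in\K[h]$ satisfying~(\ref{E:r1:gno:propH}) must be a constant, and the plan is to argue this by contradiction.

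Assume $p$ satisfies~(\ref{E:r1:gno:propH}) and let $d:=\deg p\geq 1$, with leading coefficient $a_{d}$ and (finite) root set $S\subset\K$. For each $n\geq 1$, introduce
$$q_{n}(\lambda):=s^{n}p(\lambda)-p(\lambda-n\gamma)\in\K[\lambda].$$
Its leading coefficient $(s^{n}-1)a_{d}$ is nonzero because $s$ is not a root of unity, so $\deg q_{n}=d\geq 1$; as $\K$ is algebraically closed, $q_{n}$ has at least one root $\lambda_{n}\in\K$. Applying~(\ref{E:r1:gno:propH}) to the pair $(\lambda_{n},n)$ gives $p(\lambda_{n})=0$, i.e., $\lambda_{n}\in S$. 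Thus every root of $q_{n}$ lies in $S$.

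The key finiteness step is now this: for $\alpha\in S$, one has $q_{n}(\alpha)=-p(\alpha-n\gamma)$, so $\alpha$ is a root of $q_{n}$ if and only if $\alpha-n\gamma\in S$, equivalently $n\gamma$ lies in the difference set $S-S:=\{\alpha-\beta\mid\alpha,\beta\in S\}$. Since $S-S$ is finite and $\gamma\neq 0$, only finitely many $n\geq 1$ satisfy this condition, and for all larger $n$ the polynomial $q_{n}$ has no root in $S$, contradicting the previous paragraph. The only real obstacle is isolating this finiteness step; once in place, no further ingredients are needed beyond algebraic closedness of $\K$ together with the standing hypotheses $\gamma\neq 0$ and $s$ not a root of unity.
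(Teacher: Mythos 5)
Your proof is correct and follows essentially the same route as the paper's: both exploit that $s^{n}p(h)-p(h-n\gamma)$ has the same degree as $p$ (since $s^{n}\neq 1$), hence has a root which must then be a root of $p$, and both derive the contradiction from the finiteness of the difference set of roots of $p$ together with $\gamma\neq 0$. The only cosmetic difference is that the paper fixes a single $n$ with $n\gamma$ outside the difference set at the outset, whereas you show the contradiction occurs for all sufficiently large $n$.
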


\begin{proof}
Let  $p\in\K[h]$ and assume, by way of contradiction, that $p$ is not constant. Then the set of roots of $p$ is finite and nonempty. Let $\Delta=\{ \alpha-\beta\mid \alpha \ \mbox{and} \ \beta \ \mbox{are roots of}\  p \}$ be the set of differences of (not necessarily distinct) roots of $p$. Since $\Delta$ is finite, there exists an integer $n\geq 1$ such that $n\gamma\notin\Delta$. Consider the polynomial $p_n (h)=s^np(h)-p(h-n\gamma)$. Since $s$ is not a root of unity, $p_n$ has the same degree as $p$. In particular, $p_n$ has some root, say $\alpha\in\K$. 
By~(\ref{E:r1:gno:propH}), $\alpha$ is a root of $p$, which in turn implies that $\alpha-n\gamma$ is a root of $p$, as well. Hence, $n\gamma=\alpha-\left( \alpha-n\gamma \right)\in \Delta$, which contradicts our choice of $n$. So indeed, $p\in\K$. The converse implication is trivial.
\end{proof}

We are now ready to say when $L$ is a Noetherian UFR.

\begin{theorem}\label{T:r1:gno:UFR}
Assume $f$ and $\gamma$ are nonzero and $r=1$. Then $L=L(f, 1, s, \gamma)$ is a Noetherian UFR if and only if one of the following conditions hold:
\begin{enumerate}
\item $s$ is a root of unity, or
\item $s$ is not a root of unity and $f\in\K$.
\end{enumerate}
 \end{theorem}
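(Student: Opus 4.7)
The plan is a case analysis in three cases, depending on whether $s$ is a root of unity and whether $f\in\K$. First, since $\gamma\neq 0$, Proposition~\ref{P:conf:ris1} gives that $f$ is conformal; I fix $g\in\K[h]$ with $f(h)=sg(h)-g(h-\gamma)$, so that $z:=ud-g(h)$ is a normal element of $L$ satisfying $zh=hz$, $dz=szd$, $zu=suz$, as noted after Lemma~\ref{L:conf:locd}. Combined with Lemma~\ref{L:r1:gno:eig} (which gives $\sigma$-simplicity of $\K[h]$ under $\sigma(h)=h-\gamma$), this places $L$ in the conformal, $\sigma$-simple setting of Jordan~\cite{dJ93}.

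Suppose first that $s$ is a primitive $n$-th root of unity. Then $z^n$ is central in $L$ (since $dz^n=s^nz^nd=z^nd$ and similarly for $u$), and principal eigenvectors exist by Lemma~\ref{L:r1:gno:eig}. I would invoke Jordan's classification of height one primes in this setting (\cite[Thm.~2.24]{dJ93} and surrounding results) to conclude that they are all principal, generated by $z$ or by normal polynomials in the central element $z^n$, so $L$ is a Noetherian UFR.

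Suppose next that $s$ is not a root of unity and $f=c\in\K^{*}$. Then $L=Q[h;\partial]$, where $Q\subset L$ is the subalgebra generated by $d,u$ with sole relation $du-sud=-c$ (the Weyl algebra $A_1(\K)$ when $s=1$, a quantum Weyl algebra otherwise), and $\partial$ is the derivation of $Q$ with $\partial(d)=\gamma d$ and $\partial(u)=-\gamma u$ (derivation property immediate from the defining relation). For $s=1$, $Q=A_1(\K)$ is simple, so \cite[Thm.~5.5]{CJ86} applies vacuously to give that $L$ is a Noetherian UFR. For $s\neq 1$, the element $\tilde z:=du-c/(s-1)$ is normal in $Q$ with $d\tilde z=s\tilde z d$, and the quotient $Q/\langle\tilde z\rangle$ is commutative (isomorphic to $\K[d,u]/(du-c/(s-1))$), so $\langle\tilde z\rangle$ is the unique nonzero height one prime of the Noetherian UFD $Q$. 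Since $\partial(\tilde z)=\gamma du-\gamma du=0$, the principal $\partial$-ideal $\langle\tilde z\rangle$ is $\partial$-invariant and is contained in every nonzero $\partial$-prime of $Q$, so \cite[Thm.~5.5]{CJ86} again yields that $L$ is a Noetherian UFR.

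Finally, suppose $s$ is not a root of unity and $f\notin\K$. Here I would show $L$ is not a Noetherian UFR by exhibiting a nonzero prime ideal containing no nonzero principal prime. Pick $\lambda\in\K$ with $f(\lambda)=0$ and $g(\lambda)\neq 0$; such $\lambda$ exists because otherwise every root $\lambda$ of $f$ would satisfy both $g(\lambda)=0$ and (via $g(\lambda-\gamma)=sg(\lambda)-f(\lambda)=0$) also $g(\lambda-\gamma)=0$, and since $\deg(g)=\deg(f)$ for $s\neq 1$, combining the roots of $g$ at $\{\lambda_i\}$ and $\{\lambda_i-\gamma\}$ forces $g$ to have strictly more roots than its degree, a contradiction. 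The one-dimensional representation $d,u\mapsto 0$, $h\mapsto\lambda$ gives a maximal ideal $Q_\lambda=\langle d,u,h-\lambda\rangle$ of $L$ with $z\mapsto -g(\lambda)\neq 0$, so $z\notin Q_\lambda$. The main technical step is a classification of normal elements of $L$: every normal element is homogeneous for the grading $\deg(d)=-1,\deg(u)=1,\deg(h)=0$, and a parallel to Lemma~\ref{L:f0:gn0:nor}---combined with the non-vanishing of $P_k(h):=\sum_{i=0}^{k-1}s^if(h-i\gamma)$ for all $k\geq 1$ (which holds because the leading coefficient $(s^k-1)/(s-1)$ times the leading coefficient of $f$ is nonzero when $s$ is not a root of unity and $f\neq 0$)---yields that the only normal elements of $L$ are scalars times powers of $z$. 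Consequently $\langle z\rangle$ is the only nonzero principal prime of $L$, and since $\langle z\rangle\not\subseteq Q_\lambda$, the ideal $Q_\lambda$ contains no nonzero principal prime. By DCC on primes in $L$, some height one prime contained in $Q_\lambda$ must then be non-principal, and $L$ is not a Noetherian UFR. The main obstacle is carrying out the normal-element classification carefully in the present setting, since $d$ and $u$ themselves fail to be normal when $f\neq 0$.
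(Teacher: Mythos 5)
Your Cases (a) and (b) are fine: Case (a) is essentially the paper's route (the paper simply cites Jordan's Prop.~2.18 and 2.20 of \cite{dJ93} together with Lemma~\ref{L:r1:gno:eig} and Proposition~\ref{P:r1:gno:propH}), and your Case (b) treatment via $L=Q[h;\partial]$ and \cite[Thm.~5.5]{CJ86} is a legitimate, more self-contained alternative. The problem is Case (c), and it is not the step you flagged (the normal-element classification, which is true and can be pushed through) but the step you treated as routine: the existence of $\lambda$ with $f(\lambda)=0$ and $g(\lambda)\neq 0$. Your counting argument is wrong --- the sets $\{\lambda_i\}$ and $\{\lambda_i-\gamma\}$ can overlap, and multiplicities of roots of $f$ need not transfer to $g$ --- and the claim itself is false. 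Take $\gamma=1$, $s=-2$ (not a root of unity) and $g(h)=h(h-1)(h-2)$; then
\begin{equation*}
f(h)=sg(h)-g(h-1)=-2h(h-1)(h-2)-(h-1)(h-2)(h-3)=-3(h-1)^2(h-2),
\end{equation*}
so $f\notin\K$ but every root of $f$ (namely $1$ and $2$) is a root of $g$. Hence every one-dimensional representation $d,u\mapsto 0$, $h\mapsto\lambda$ kills $z=ud-g(h)$, and your construction produces no prime ideal avoiding $z$.

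This gap is fatal to the strategy as written: granting your normal-element classification, $\langle z\rangle$ is a genuine nonzero completely prime principal ideal (the quotient is the generalized Weyl algebra $\K[h](\sigma,g)$, a domain), so to disprove the UFR property you must exhibit a nonzero prime \emph{not} containing $z$, and one-dimensional modules cannot do this in examples like the one above. The repair is exactly what Proposition~\ref{P:r1:gno:propH} is for: since $g\notin\K$, there exist $\lambda\in\K$ and $n\geq 1$ with $s^{n}g(\lambda)=g(\lambda-n\gamma)$ and $g(\lambda)\neq 0$, which (via the $P_k$-vanishing this encodes) yields an $n$-dimensional simple module on which $z$ acts invertibly; its annihilator is the required prime containing no nonzero principal prime. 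This is precisely the content of Jordan's Prop.~2.20, which the paper invokes. So you should either cite that result (as the paper does) or build the higher-dimensional $Q_\lambda$-type ideals in the $\gamma\neq 0$ setting yourself; the one-dimensional shortcut does not suffice.
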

 
\begin{proof}
 The first and second parts follow from~\cite[Prop.\ 2.18]{dJ93} and~\cite[Prop.\ 2.20]{dJ93}, respectively, and the results in this section. Note that, since $s\neq 1$, then $f\in\K\iff g\in\K$.
\end{proof}

Next, we deduce from~\cite[2.22 and Remark\ 2.25]{dJ93} the cases where $L$ is a Noetherian UFD.

\begin{theorem}\label{T:r1:gno:UFD}
Assume $f$ and $\gamma$ are nonzero and $r=1$. Then $L=L(f, 1, s, \gamma)$ is a Noetherian UFD if and only if one of the following conditions hold:
\begin{enumerate}
\item $s=1$, or
\item $s$ is not a root of unity and $f\in\K$.
\end{enumerate}
 \end{theorem}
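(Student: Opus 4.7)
The strategy is to invoke Theorem~\ref{T:r1:gno:UFR} to cut down the cases requiring examination, and then, within each remaining case, to determine the height one prime spectrum of $L$ and check which of these primes are completely prime, using Jordan's analysis in~\cite[\S 2]{dJ93}.

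Since any Noetherian UFD is a Noetherian UFR, Theorem~\ref{T:r1:gno:UFR} restricts us to two cases: $s$ is a root of unity, or $s$ is not a root of unity and $f\in\K^{*}$. I will fix $g\in\K[h]$ with $f(h)=sg(h)-g(h-\gamma)$ and set $z=ud-g(h)$; a direct calculation (as in Section~\ref{SS:conf}) shows that $z$ is normal, with $hz=zh$, $dz=szd$ and $uz=s^{-1}zu$. The candidate generators of height one primes will be the normal elements $z-\lambda$, and, when $s^{l}=1$, the central elements $z^{l}-\lambda$.

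For the \emph{if} direction, first suppose $s=1$. Then $z$ is central, and arguing as in~\cite[2.22]{dJ93} (using the $\sigma$-simplicity of $\K[h]$ from Lemma~\ref{L:r1:gno:eig} together with the fact that any principal eigenvector in this setting must have degree one) the height one prime ideals of $L$ are precisely the ideals $\langle z-\lambda\rangle$ for $\lambda\in\K$; the quotient $L/\langle z-\lambda\rangle$ is an iterated Ore extension of $\K[h]$ of Gelfand--Kirillov dimension two, hence a domain. If instead $s$ is not a root of unity and $f\in\K^{*}$, I can take $g=f/(s-1)\in\K^{*}$; by Lemma~\ref{L:r1:gno:eig} there are no principal eigenvectors, so the same reasoning yields the height one primes as $\langle z-\lambda\rangle$, now with quotient a skew polynomial ring over $\K[h]$, again a domain. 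In both positive cases $L$ is therefore a Noetherian UFD.

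The remaining case is $s$ a primitive $l$-th root of unity with $l\geq 2$, where I must exhibit a height one prime that is \emph{not} completely prime. Because $s^{l}=1$, the element $z^{l}$ is central, so for a suitable $\lambda\in\K^{*}$ the ideal $\langle z^{l}-\lambda\rangle$ is a candidate; I would show that it has height one by mimicking the argument given in Lemma~\ref{L:ql:h1p} for $h^{l}-\lambda$ (exploiting the presentation of $L$ as an iterated skew polynomial ring and the fact that $(z^{l}-\lambda)\K[h,z]$ is a $\sigma$-prime ideal of the commutative polynomial subalgebra $\K[h,z]$) or by invoking the analogue of~\cite[Remark\ 2.25]{dJ93}. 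Since $\K$ is algebraically closed and $l\geq 2$, the polynomial $z^{l}-\lambda$ factors non-trivially in $\K[z]$, so the quotient has zero divisors and the ideal fails to be completely prime, ruling out the Noetherian UFD property. The main obstacle I anticipate is the careful translation of Jordan's framework~\cite{dJ93}, whose ambiskew polynomial setup uses slightly different notation, into our explicit presentation of $L$, and in particular the verification that the quotients $L/\langle z-\lambda\rangle$ are domains rather than merely prime rings -- it is precisely this distinction which marks the passage from the UFR property (Theorem~\ref{T:r1:gno:UFR}) to the UFD property.
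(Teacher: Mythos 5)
Your strategy coincides with the paper's, which disposes of this theorem in a single line by citing \cite[2.22 and Remark 2.25]{dJ93}; your reconstruction of the content of those citations — reduce via Theorem~\ref{T:r1:gno:UFR} to the three cases $s=1$, $s$ not a root of unity with $f\in\K^{*}$, and $s$ a primitive $l$-th root of unity with $l\geq 2$, then test the height one primes built from $z=ud-g(h)$ for complete primality — is the right skeleton. Two local steps need repair, though neither changes the conclusion. First, in the case where $s$ is not a root of unity and $f\in\K^{*}$, the ideals $\langle z-\lambda\rangle$ with $\lambda\neq 0$ are \emph{not} height one primes: since $dz=szd$ with $s\neq 1$, the element $d(z-\lambda)-s(z-\lambda)d=(s-1)\lambda d$ lies in $\langle z-\lambda\rangle$, so this ideal contains $d$, hence contains $ud\equiv g+\lambda$, and therefore equals $L$ for every $\lambda\notin\{0,-g\}$. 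What the absence of principal eigenvectors actually gives is that $\langle z\rangle$ is the \emph{unique} height one prime, and that one is completely prime because $L/\langle z\rangle\simeq\K[h][x^{\pm 1};\sigma]$ when $g$ is a unit; that is all you need. Second, in the case $s=1$ the quotient $L/\langle z-\lambda\rangle$ is not an iterated Ore extension of $\K[h]$, and ``Gelfand--Kirillov dimension two, hence a domain'' is not a valid inference. The correct observation is that this quotient is the generalized Weyl algebra $\K[h](\sigma, g(h)+\lambda)$ with $\sigma(h)=h-\gamma$, and $g(h)+\lambda\neq 0$ because $g\notin\K$ when $s=1$ and $f\neq 0$; a generalized Weyl algebra over a domain with nonzero defining element is a domain. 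Your treatment of the root-of-unity case ($z^{l}-\lambda$ central, generating a height one prime that is not completely prime since $z^{l}-\lambda$ factors over the algebraically closed field) is correct and parallels what the paper does explicitly for $h^{l}-\lambda$ in Lemma~\ref{L:ql:h1p} and for $z^{l}-\lambda$ in Theorem~\ref{T:f0:gn0:UFD}.
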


\subsection{The case $f$ conformal, $r=1$ and $\gamma= 0$}\label{SS:r1:g0}

In this case, $h$ is central in $L$. Note also that since $f$ is conformal in $L$, then $s\neq1$. Nevertheless,  the conformality condition will not be used in this section, as we will not refer to~\cite{dJ93}. We will consider separately the cases $s$ not a root of unity and $s$ a root of unity of order $l\geq 2$. 


\subsubsection{The case $f$ conformal, $r=1$, $\gamma=0$ and $s$ not a root of unity}\label{SS:r1:g0:snru}

\begin{theorem}\label{T:r1:g0:snru:UFD}
 Assume $f$ is nonzero, $r=1$, $\gamma=0$ and $s$ is not a root of unity. Then $L=L(f, 1, s, 0)$ is a Noetherian UFD. In fact, the height  one prime ideals of $L$ are $\langle h-\lambda \rangle$, for $\lambda\in\K$, and $\langle du-ud \rangle$.
\end{theorem}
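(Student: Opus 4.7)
The plan is to identify the height one primes of $L$ explicitly and verify that each is principal and completely prime; this directly establishes the Noetherian UFD property. Set $z := du - ud$. Since $r = 1$ and $\gamma = 0$, the generator $h$ is central in $L$, so each $h - \lambda$ ($\lambda \in \K$) is normal. Using $du = sud - f(h)$, a short computation yields $hz = zh$, $dz = szd$, and $uz = s^{-1}zu$, so $z$ is also normal (and nonzero, as $L$ is a domain).

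Next I would verify that the candidate ideals are completely prime of height one. The quotient $L/\langle h-\lambda\rangle$ is generated by the images of $d$ and $u$ modulo the single relation $du - sud + f(\lambda) = 0$: this presents a quantum plane when $f(\lambda) = 0$ and, after rescaling $u$, a quantum Weyl algebra when $f(\lambda) \neq 0$; both are domains. In $L/\langle z \rangle$ one has $du = ud$, which together with the third defining relation forces $ud = f(h)/(s-1)$, so this quotient is commutative, isomorphic to $\K[d, u, h]/\bigl(ud - f(h)/(s-1)\bigr)$, a domain since the defining polynomial is linear in $u$ and hence irreducible. The Principal Ideal Theorem then assigns each ideal height one.

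To show these exhaust the height one primes, let $P$ be such a prime of $L$. Since $\K[h]$ is central in $L$, the intersection $P \cap \K[h]$ is a prime ideal of $\K[h]$. If $P \cap \K[h] = \langle h - \lambda \rangle \neq 0$ for some $\lambda$, comparison of heights forces $P = \langle h - \lambda \rangle$. If instead $P \cap \K[h] = 0$, I would localize at the central multiplicative set $S := \K[h] \setminus \{0\}$ to form $T := LS^{-1}$, in which $PT$ is a height one prime. Substituting $v := -f(h)^{-1}u$ (legitimate as $f \neq 0$) converts $du - sud = -f(h)$ into $dv - svd = 1$, so $T \cong A_{1}^{s}(\K(h))$, the quantum Weyl algebra over $\K(h)$ with parameter $s$.

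The main obstacle is showing that for $s$ not a root of unity, $A_{1}^{s}(\K(h))$ has a unique nonzero height one prime, generated by the normal element $N := (s-1)vd + 1$. This is a well-known property of the quantum Weyl algebra, and my plan is to derive it either from Jordan's analysis of ambiskew polynomial rings in~\cite{dJ93}, or by a self-contained argument via the Ore extension presentation $T = \K(h)[d][v; \sigma, \delta]$: one checks that the only $(\sigma, \delta)$-stable prime of $\K(h)[d]$ is $(0)$ (since $\delta(d) \neq 0$), and combines this with the Goodearl--Letzter description of primes in Ore extensions. A short calculation gives $N = -f(h)^{-1} z$ in $T$, so $\langle N \rangle_T = \langle z \rangle_T$, yielding $P = \langle z \rangle$ and completing the classification.
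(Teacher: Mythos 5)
Your proposal is correct and follows essentially the same route as the paper: the same candidate ideals $\langle h-\lambda\rangle$ and $\langle du-ud\rangle$, the same quotient descriptions (quantum plane/quantum Weyl algebra, and the commutative hypersurface $\K[d,u,h]/(ud-f(h)/(s-1))$), and the same reduction via localisation at $\K[h]\setminus\{0\}$ to the quantised Weyl algebra $\mathbb{A}^s_1(\K(h))$ with the normal element $N=dv-vd=-f(h)^{-1}(du-ud)$. The only divergence is at the final step, where the paper deduces that every nonzero prime of $\mathbb{A}^s_1(\K(h))$ contains $N$ from the simplicity of the localisation at the powers of $N$ (citing \cite[Lem.\ 2.2]{AVV88}), whereas you leave this key fact as a plan to be supplied from \cite{dJ93} or a Goodearl--Letzter style analysis; either route works, but your sketch would still need to be completed along one of those lines.
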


\begin{proof}
 Let $\lambda\in\K$. Then $h-\lambda$ is central and the factor algebra $L/\langle h-\lambda \rangle$ is either the quantum plane or the quantum Weyl algebra, depending on whether $\lambda$ is a root of $f$ or not. In either case, 
$L/\langle h-\lambda \rangle$ is a domain, and thus the ideal $\langle h-\lambda \rangle$ is completely prime.

The element $du-ud=(s-1)z$ is normal and the factor algebra $L/\langle du-ud \rangle$ is a commutative algebra generated by $h$, $d$ and $u$, subject to the relation $ud-\frac{1}{s-1}f(h)=0$. It is easy to see that the element $ud-\frac{1}{s-1}f(h)$, viewed as an element of the polynomial algebra in the $3$ commuting variables $h$, $d$ and $u$, is irreducible, provided that $f\neq 0$. This shows that $L/\langle du-ud \rangle$ is a domain. Another way of reaching this conclusion is by realising this factor algebra as the generalized Weyl algebra $\K[h]\left( \mathrm{id}_{\K[h]}, \frac{1}{s-1}f(h) \right)$. (The reader is referred to \cite{Bav93} for more details on generalized Weyl algebras.)

The above shows that all ideals of the form $\langle h-\lambda \rangle$, for $\lambda\in\K$, and $\langle du-ud \rangle$ are completely prime and principal. By the Principal Ideal Theorem, they have height one. To finish the proof, we need only show that any nonzero prime ideal of $L$ must contain one of these ideals. We do so in the next proposition.
\end{proof}

\begin{proposition}\label{P:r1:g0:snru:P}
 Assume $f$ is nonzero, $r=1$, $\gamma=0$ and $s$ is not a root of unity. Then any nonzero prime ideal of $L=L(f, 1, s, 0)$ must contain either $du-ud$ or $h-\lambda$, for some $\lambda\in\K$.
\end{proposition}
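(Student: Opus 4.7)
The plan is to reduce the statement, via localisation at a central Ore set, to the simplicity of a related generalised Weyl algebra (GWA), which in turn will follow from Bavula's simplicity criterion. Let $P$ be a nonzero prime of $L$. Since the hypotheses $r = 1$ and $\gamma = 0$ make $h$ central in $L$, the contraction $P \cap \K[h]$ is a prime ideal of $\K[h]$; by algebraic closure of $\K$, it is either of the form $\langle h - \lambda\rangle$ for some $\lambda \in \K$ (in which case there is nothing to prove), or zero. I would concentrate on the latter case. Then $P$ meets the central Ore set $C := \K[h]\setminus\{0\}$ trivially, so $PL_C$ is a proper nonzero prime of the localisation $L_C$ with $PL_C \cap L = P$. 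It would therefore suffice to prove that every nonzero prime of $L_C$ contains the normal element $z := ud - \tfrac{1}{s-1}f(h)$, for then $z \in P$ and hence $du - ud = (s-1)z \in P$.

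To establish this, I would identify $L$ as the GWA $D(\phi, a)$ (in the sense of~\cite{Bav93}), with $D := \K[h, z]$ a polynomial ring in two commuting variables (note that $L_0 = \K[h, ud]$ is polynomial and $z$ differs from $ud$ by an element of $\K[h]$), $\phi$ the automorphism of $D$ fixing $h$ and sending $z$ to $s^{-1}z$, and $a := ud = z + \tfrac{1}{s-1}f(h) \in D$. A direct check of the defining relations of $L$, using the normality identities $uz = s^{-1}zu$ and $dz = szd$ together with $ud = a$ and $du = \phi^{-1}(a) = sz + \tfrac{1}{s-1}f(h)$, confirms the identification, with $u, d$ playing the roles of the two GWA generators. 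Localising at $C$ and inverting the normal element $z$ then yields $(L_C)_z \cong D'(\phi, a)$, where $D' := \K(h)[z^{\pm 1}]$.

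Finally, Bavula's criterion asserts that $D'(\phi, a)$ is simple as soon as $D'$ is $\phi$-simple and $aD' + \phi^n(a)D' = D'$ for every $n > 0$. The first condition is routine: any $\phi$-invariant principal ideal of $D'$ is generated by some $p$ with $\phi(p)$ associated to $p$, and comparing supports in $z$ (which $\phi$ scales by $s^{-1}$) forces $p$ to be a monomial in $z$ because $s$ is not a root of unity, hence a unit in $D'$. The second is witnessed by the computation $a - \phi^n(a) = (1 - s^{-n})z$, which is a unit in $D'$ since $s^n \neq 1$. Thus $(L_C)_z$ is simple, the only prime of $L_C$ not containing $z$ is zero, and so $z \in P$ as required. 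The main technical step, and the place demanding care, is the GWA identification together with the verification of Bavula's simplicity conditions; the remaining reductions are formal consequences of localisation.
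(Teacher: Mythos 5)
Your argument is correct and follows essentially the same route as the paper: reduce to the case $P\cap\K[h]=\langle 0\rangle$, localise at the central set $\K[h]\setminus\{0\}$, and deduce $du-ud\in P$ from the simplicity of the further localisation at (the powers of) the normal element $z$. The only cosmetic difference is that the paper presents the localisation as the quantised Weyl algebra $\mathbb{A}^{s}_{1}(\K(h))$ and quotes the simplicity of $\mathbb{B}^{s}_{1}(\K(h))$ from Awami--Van den Bergh--Van Oystaeyen, whereas you use the (equivalent) generalized Weyl algebra presentation and verify Bavula's simplicity criterion directly.
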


\begin{proof}
 Let $P$ be a nonzero prime ideal of $L$ and assume $h-\lambda$ is not in $P$, for any scalar $\lambda$. We will show that $du-ud\in P$.
 
 Since 
 $\K$ is algebraically closed and $\K[h]$ is a central subalgebra, it follows that $P\cap\K[h]=\langle 0\rangle $. Let 
 $\widetilde{L}$ be the localisation of $L$ at the central multiplicative set of nonzero elements of $\K[h]$. Let $\mathbb{F}=\K(h)$ be the field of fractions of $\K[h]$. Then $\widetilde{L}$ can be seen as the first quantised Weyl algebra $\mathbb{A}^{s}_{1}(\mathbb{F})$, generated over $\mathbb{F}$ by $X$ and $Y$, and subject to the relation $XY-sYX=1$. In fact, it is easy to check that there  are mutually inverse $\mathbb{F}$-algebra  maps, 
 $\Phi:\widetilde{L}\rightarrow \mathbb{A}^{s}_{1}(\mathbb{F})$ and $\Psi:\mathbb{A}^{s}_{1}(\mathbb{F})\rightarrow \widetilde{L}$, such that $\Phi (d)=X$, $\Phi (u)=-f(h)Y$, $\Psi (X)=d$ and $\Psi (Y)=-u\left( f(h) \right)^{-1}$.
 
 Thus, $P$ extends to a nonzero prime ideal $\widetilde{P}$ of $\widetilde{L}$, which we identify, via the map $\Phi$ above, with a prime ideal of $\mathbb{A}^{s}_{1}(\mathbb{F})$. The element $Z:=XY-YX$ of $\mathbb{A}^{s}_{1}(\mathbb{F})$ is normal, nonzero and not a unit. In fact, $Z$ corresponds, via $\Psi$, to the element $(ud-du)\left( f(h) \right)^{-1}$. Let 
 $\mathbb{B}^{s}_{1}(\mathbb{F})$ be the localisation of $\mathbb{A}^{s}_{1}(\mathbb{F})$ at the powers of $Z$. Since $s$ is not a root of unity, $\mathbb{B}^{s}_{1}(\mathbb{F})$ is simple, by~\cite[Lem.\ 2.2]{AVV88} (note that this result does not depend on the base field being algebraically closed). Since $Z$ is normal, this means that every nonzero prime ideal of $\mathbb{A}^{s}_{1}(\mathbb{F})$ contains $Z$. In particular, $Z\in \Phi (\widetilde{P})$, i.e., $(ud-du)\left( f(h) \right)^{-1}\in \widetilde{P}$. Thus, $du-ud\in P=\widetilde{P}\cap L$.
\end{proof}


\subsubsection{The case $f$ conformal, $r=1$, $\gamma=0$ and $s\neq 1$  a root of unity}\label{SS:r1:g0:sru}

We finally tackle the case in which $s$ is a primitive $l$-th root of unity, for some $l\geq 2$. It is straightforward to see that, in this case, both $d^l$ and $u^l$ are central.
Our aim is to prove that, in this case, $L$ is a Noetherian UFR. 

Let $\widetilde{L}$ be the localisation of $L$ with respect to the multiplicative set generated by the central elements of the form $h-\lambda$, where $\lambda$ runs through the roots of $f$. In case $f$ is a (nonzero) constant polynomial, we have $\widetilde{L}=L$.

Since, for $\lambda$ a root of $f$, $L/\langle h-\lambda \rangle$ is a quantum plane, the ideals of the form  $\langle h-\lambda \rangle$, with $f(\lambda)=0$, are completely prime as well as pairwise distinct. Thus, by~\cite[Prop.\ 1.6]{LLR06}, it will be enough to show that $\widetilde{L}$ is a Noetherian UFR.

Let $S$ be the localisation of $\K[h]$ at the multiplicative set generated by the $h-\lambda$, with $\lambda$ running through the roots of $f$. Since $\K$ is algebraically closed, $f$ is a product of linear factors, and thus is invertible in $S$. The localised algebra $\widetilde{L}$ can be seen as the algebra over  $S$, generated by elements $D$ and $U$, subject to the relation
\begin{equation*}
DU-sUD=1, 
\end{equation*}
where $D=d$ and $U=-u\left( f(h) \right)^{-1}$. Consider the nonzero normal element $Z=DU-UD$, of $\widetilde{L}$. It satisfies  $ZU=sUZ$ and $ZD=s^{-1}DZ$. In particular, $Z^l$ is central in $\widetilde{L}$. The algebra 
$\widetilde{L}/\langle Z \rangle$ is isomorphic to the commutative Laurent polynomial algebra $S[U^{\pm 1}]$, and hence $Z$ generates a completely prime ideal of $\widetilde{L}$. Therefore, it will suffice to show that the localisation $\widehat{L}$ of $\widetilde{L}$ at the multiplicative set generated by $Z$ is a Noetherian UFR, by~\cite[Prop.\ 1.6]{LLR06}. The latter is a consequence of the result that follows.

\begin{proposition}\label{P:r1:g0:sru:Az}
Under the above assumptions, $\widehat{L}$ is an Azumaya algebra over its centre $\Z(\widehat{L})$, with $[\widehat{L}:\Z(\widehat{L})]=l^2$. Moreover, the centre $\Z(\widehat{L})$ of $\widehat{L}$ is the localisation of $S[U^l, D^l]$ at the powers of $Z^l$.
\end{proposition}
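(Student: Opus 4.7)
The plan is to split the argument into computing $\Z(\widehat{L})$ explicitly and then deducing the Azumaya property from the resulting module structure.

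For the centre, I would exploit the $\mathbb{Z}$-grading on $\widehat{L}$ inherited from $L$, with $\deg U = 1$, $\deg D = -1$ and elements of $S$ in degree zero; this is consistent since $Z = (s-1)UD + 1$ lies in degree zero. A PBW-style reduction, using the defining relation together with the normality of $Z$, shows $\widehat{L}_n = S[Z^{\pm 1}]\cdot U^n$ for $n \geq 0$ and $\widehat{L}_{-n} = S[Z^{\pm 1}]\cdot D^n$ for $n \geq 0$. Any central element decomposes into homogeneous pieces, each still central. A homogeneous element $\alpha(Z) U^n$ of degree $n \geq 0$ commutes with $U$ iff $\alpha(Z) = \alpha(s^{-1}Z)$ (using $UZ = s^{-1}ZU$, easily derived from $Z = (s-1)UD+1$); since $s$ has order $l$, this forces $\alpha \in S[Z^{\pm l}]$. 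Commutation with $D$, after applying the standard identity $DU^n = s^n U^n D + \frac{s^n-1}{s-1}U^{n-1}$ and $ZD = s^{-1}DZ$, reduces to a vanishing condition that requires $s^n = 1$, i.e.\ $l \mid n$. A symmetric analysis for $n < 0$ yields $\Z(\widehat{L}) = S[Z^{\pm l}][U^l, D^l]$.

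To rewrite this as the localisation of $S[U^l, D^l]$ at the powers of $Z^l$, one needs $Z^l \in S[U^l D^l]$. Indeed, applying the centre computation to the case $n = 0$ shows that the polynomial-in-$Z$ elements central in $\widehat{L}$ form precisely $S[Z^{\pm l}]$; since $U^l D^l$ is central and a polynomial of degree $l$ in $Z$, comparing degrees forces $U^l D^l$ to be of degree exactly one in $Z^l$, with non-zero leading scalar coefficient. Inverting this relation expresses $Z^l$ as a polynomial of degree one in $U^l D^l$, so $S[Z^{\pm l}][U^l, D^l]$ coincides with $S[U^l, D^l][Z^{-l}]$, as required. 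The same analysis simultaneously shows that $\widehat{L}$ is free over $\Z(\widehat{L})$ with basis $\{U^i D^j : 0 \leq i, j < l\}$, giving $[\widehat{L}:\Z(\widehat{L})] = l^2$.

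For the Azumaya property, I would argue fibre-wise using the standard criterion that a finitely generated projective $R$-algebra of constant rank $n^2$ whose fibre at every maximal ideal is a central simple algebra of dimension $n^2$ is Azumaya of rank $n^2$. A maximal ideal of $\Z(\widehat{L})$ specifies $h = \lambda$ with $f(\lambda) \neq 0$ together with scalars $\mu, \nu \in \K$ for $U^l$ and $D^l$, subject to $Z^l \neq 0$ at $(\lambda, \mu, \nu)$. The corresponding fibre is the $l^2$-dimensional $\K$-algebra $\K\langle U, D\rangle / (DU - sUD - 1, U^l - \mu, D^l - \nu)$ with $Z$ invertible; a direct check shows its centre is $\K$, and its simplicity (when $Z^l \neq 0$) is a classical fact about the quantum Weyl algebra at a root of unity, making the fibre isomorphic to $M_l(\K)$. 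The main obstacle I expect is the polynomial identity $Z^l \in S[U^l D^l]$ together with the fibre-wise simplicity check, the latter requiring some care at degenerate points where $\mu$ or $\nu$ vanishes (yet $Z^l \neq 0$); once these are in hand, the present proposition is essentially obtained by base change from $\K$ to $S$ of the analogous statement for the localised quantum Weyl algebra.
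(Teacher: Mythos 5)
Your proposal is correct, and it reaches the conclusion by a route that is genuinely different from the paper's at the two key steps. For the centre, you compute everything from scratch via the $\mathbb{Z}$-grading and the eigenvalue conditions $\alpha(s^{-1}Z)=\alpha(Z)$ and $s^{n}=1$, arriving at $S[Z^{\pm l}][U^{l},D^{l}]$ and then reconciling this with the stated description through the identity $Z^{l}=1+(-1)^{l+1}(s-1)^{l}U^{l}D^{l}$ (which is correct, since $U^{l}D^{l}=\prod_{i=0}^{l-1}(s^{-i}Z-1)/(s-1)^{l}$ and $\prod_{i}(s^{-i}Z-1)=(-1)^{l+1}(Z^{l}-1)$); the paper instead starts from the asserted equality $\Z(\widetilde{L})=S[U^{l},D^{l}]$ and only tracks the powers $Z^{n}=Z^{ql}Z^{r}$ under localisation, leaving the compatibility of the two descriptions implicit. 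Your version is more self-contained; the paper's is shorter. For the Azumaya property, you use the fibre-wise criterion (finitely generated module with central simple fibres of constant dimension $l^{2}$), whereas the paper invokes the Artin--Procesi theorem and shows that every finite-dimensional irreducible representation has dimension exactly $l$, getting $m\geq l$ from \cite[Lem.\ 1.2]{AD96} applied to $\rho(D),\rho(U)$ with $\rho(Z)$ invertible. The two criteria are close cousins --- your fibres $M_{l}(\K)$ are precisely the images of the paper's $l$-dimensional irreducibles --- but your route obliges you to exhibit an $l$-dimensional irreducible module of the fibre algebra at every point with $Z^{l}\neq 0$, including the degenerate ones where $\mu\nu=0$; this does go through (cyclic modules $v_{0},\dots,v_{l-1}$ with $Uv_{i}=v_{i+1}$ and $Dv_{i}=c_{i}v_{i-1}$, $c_{i+1}=sc_{i}+1$), but it is the step you must actually write out. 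The other point to firm up is the freeness of $\widehat{L}$ over its centre on the basis $\{U^{i}D^{j}\}_{0\leq i,j<l}$, which you assert follows ``from the same analysis'' but which is not an immediate consequence of the graded decomposition; the paper simply quotes \cite[Lem.\ 1.2]{AD96} for it, and you will need either that reference or a short separate argument.
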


\begin{proof}
 The proof is entirely analogous to that of~\cite[Prop.\ 1.3]{AD96}. We give details for completeness.
 
 First, it is easy to see that the centre of $\widetilde{L}$ is $S[U^l, D^l]$, and it must contain $Z^{l}$, as this element commutes with $D$ and $U$.
 
 Let $b=a Z^{n} $ be an element of $\widehat{L}$ with $a \in \widetilde{L}$ and $n \in \mathbb{Z}$. Take $q \in \mathbb{Z}$ and $0 \leq r <l$ such that $n=ql+r$. As $Z^{ql}$ is central in $\widehat{L}$, we get that $b=a Z^{ql}Z^r$ is central in $\widehat{L}$ if and only if $a Z^r$ is central in  $\widetilde{L}$. Hence, $\Z(\widehat{L})=\{c Z^{ql} ~|~c\in S[U^l,D^l],~q\in \mathbb{Z}\}$ is the localisation of $S[U^l,D^l]$ at the powers of $Z^l$. 
 
By~\cite[Lem.\ 1.2]{AD96}, $\left\{ U^iD^j \right\}_{0\leq i, j\leq l-1}$ is a basis for $\widehat{L}$ over its centre. So $[\widehat{L}:\Z(\widehat{L})]=l^2$.

To conclude, it is enough to show that the irreducible finite dimensional representations of $\widehat{L}$ over $\K$ all have dimension $l$, by the Artin-Procesi Theorem. Let $\rho : \widehat{L}\rightarrow \mathrm{End}_{\K}(V)$ be such an irreducible representation, with $\dim_{\K} V=m$. Since $\K$ is algebraically closed, and $V$ is finite-dimensional, it follows by Schur's Lemma that the centre of $\widehat{L}$ acts on $V$ as scalars. Thus, $\dim_{\K} \rho({\widehat{L}})\leq l^2$. By the Jacobson Density Theorem, $\rho$ is surjective. Therefore,
$$
m^2=\dim_{\K} \mathrm{End}_{\K}(V)=\dim_{\K} \rho({\widehat{L}})\leq l^2,
$$
and $m\leq l$.

On the other hand, let $X=\rho (D)$, $Y=\rho (U)$. Then, $XY-sYX=\rho (DU-sUD)=\rho(1)=1\in\mathrm{End}_{\K}(V)$. Furthermore, as $Z$ is invertible in $\widehat{L}$, the same is true of $\rho(Z)=XY-YX\in\mathrm{End}_{\K}(V)$. Thus, $\dim_{\K} \mathrm{End}_{\K}(V)\geq l^2$, again by~\cite[Lem.\ 1.2]{AD96}. So $m\geq l$ and $m=l$. 
\end{proof}

\begin{theorem}\label{T:r1:g0:sru:UFR}
 Assume $f$ is nonzero, $\gamma=0$, $r=1$ and $s$ is a primitive $l$-th root of unity, for some $l\geq 2$. Then $L=L(f, 1, s, 0)$ is a Noetherian UFR, but not a Noetherian UFD.
\end{theorem}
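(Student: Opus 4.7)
The plan is to combine the Nagata-type descent of~\cite[Prop.\ 1.6]{LLR06} with the Azumaya structure on $\widehat{L}$ established in Proposition~\ref{P:r1:g0:sru:Az}: both halves of the theorem will be read off from $\widehat{L}$ and transferred to $L$.

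For the UFR assertion, the localisations $L\to\widetilde{L}\to\widehat{L}$ are formed successively at the completely prime normal ideals $\langle h-\lambda\rangle$ (for $\lambda$ a root of $f$; factors being quantum planes) and then at $\langle Z\rangle$ in $\widetilde{L}$ (with commutative factor $S[U^{\pm 1}]$). By~\cite[Prop.\ 1.6]{LLR06} it is enough to prove that $\widehat{L}$ is a Noetherian UFR. Its centre $\Z(\widehat{L})=S[U^l,D^l][Z^{-l}]$ is a localisation of a polynomial ring over the (localised) PID $S$, hence a commutative Noetherian UFD. For an Azumaya algebra over a commutative Noetherian UFD, the standard extension--contraction bijection on prime spectra is height-preserving, and each principal height one prime $(c)$ of the centre extends to the principal height one prime $c\widehat{L}$ of $\widehat{L}$ generated by the central (hence normal) element $c$. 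So $\widehat{L}$, and therefore $L$, is a Noetherian UFR.

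For the failure of UFD, since the localisation $L\to\widehat{L}$ is again through normal elements generating completely prime ideals, it preserves the UFD property; hence it suffices to exhibit a non-completely-prime height one ideal of $\widehat{L}$. Fix $\mu\in\K^*$ and consider the central element $d^l-\mu$ (central because $s^l=1$). In $\Z(\widehat{L})$ this is linear in the polynomial generator $D^l$ and, for all but finitely many $\mu$, remains a non-unit after inverting $Z^l$; this follows from the formula $Z=1+(s-1)UD$, whose constant term $1$ survives into $Z^l$ so that $Z^l\notin (D^l-\mu)$. Hence $\langle d^l-\mu\rangle$ is a height one prime of $\widehat{L}$. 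But $d^l-\mu=\prod_{i=1}^l(d-\nu_i)$ factors into $l$ distinct linear factors, and by the rank-$l^2$ free basis $\{U^iD^j\}_{0\le i,j<l}$ of $\widehat{L}$ over $\Z(\widehat{L})$ from~\cite[Lem.\ 1.2]{AD96}, each $d-\nu_i$ has nonzero basis coefficient on $D$ and is therefore nonzero modulo $\langle d^l-\mu\rangle$; so the quotient has zero divisors, proving that this height one prime is not completely prime.

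The main obstacle is the verification that $d^l-\mu$ generates a proper height one prime of the centre after the $Z^l$-inversion, i.e.\ that $Z^l\notin (D^l-\mu)$ in $S[U^l,D^l]$; equivalently, that the image of $Z^l$ in $S[U^l]$ under the substitution $D^l\mapsto\mu$ is nonzero. The constant-term-$1$ observation above, coming from $Z=1+(s-1)UD$ and the centrality of $Z^l$, handles this for every $\mu\neq 0$, and the corresponding contraction of $\langle d^l-\mu\rangle$ in $\widehat{L}$ back to $L$ then produces a height one prime $P$ of $L$ whose quotient $L/P$ embeds into the non-domain $\widehat{L}/\langle d^l-\mu\rangle$, contradicting the UFD property.
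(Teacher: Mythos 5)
Your proof is correct, and its UFR half is the same as the paper's: both deduce the result from Proposition~\ref{P:r1:g0:sru:Az} by noting that an Azumaya algebra has all ideals centrally generated, that $\Z(\widehat{L})$ is a commutative Noetherian UFD, and then descending via \cite[Prop.\ 1.6]{LLR06}. Where you genuinely diverge is the non-UFD half. The paper stays inside $L$ itself: it shows $\langle d^{l}-1\rangle$ is prime by exhibiting $(d^{l}-1)\K[h,d]$ as the intersection of the $\sigma$-orbit of the prime $(d-1)\K[h,d]$, hence a $\delta$-stable $\sigma$-prime ideal, and invoking \cite[Prop.\ 2.1]{aB85} for the Ore extension $L=\K[h,d][u;\sigma^{-1},\delta]$; the Principal Ideal Theorem and the nontrivial central factorisation of $d^{l}-1$ finish the job. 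You instead read primality off the centre of $\widehat{L}$: $D^{l}-\mu$ generates a height one prime there, it survives the inversion of $Z^{l}$ because $Z=1+(s-1)UD$ forces $Z^{l}-1$ into the right ideal $\widehat{L}D$, so that writing $Z^{l}=1+\sum_{b\geq 1}e_{b}U^{bl}D^{bl}$ in $S[U^{l},D^{l}]$ shows its image under $D^{l}\mapsto\mu$ has constant term $1$; and the free basis $\{U^{i}D^{j}\}_{0\leq i,j<l}$ of \cite[Lem.\ 1.2]{AD96} shows the factors $d-\nu_{i}$ are nonzero modulo $\langle d^{l}-\mu\rangle$. This buys uniformity, since both halves then come out of the same Azumaya picture, at the cost of having to transfer the failure of complete primality back to $L$. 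On that last step one phrase needs repair: ``$L/P$ embeds into the non-domain $\widehat{L}/\langle d^{l}-\mu\rangle$'' is not by itself a contradiction, as a domain can embed into a non-domain. What you actually need, and what your earlier remark that the localisation preserves the UFD property amounts to, is that $\widehat{L}/P\widehat{L}$ is an Ore localisation of $L/P$ at the images of the denominators (all regular if $L/P$ were a domain), hence would itself be a domain; with that phrasing the contradiction is genuine and the argument is complete.
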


\begin{proof}
Since the algebra $\widehat{L}$ is Azumaya over its centre, it follows that all ideals of $\widehat{L}$ are centrally generated. Hence, as  $\Z(\widehat{L})$ is a (commutative) UFR, by Proposition~\ref{P:r1:g0:sru:Az}, we deduce from the Principal Ideal Theorem that $\widehat{L}$ is a Noetherian UFR. We can thus conclude that $L$ is a Noetherian UFR, by~\cite[Prop.\ 1.6]{LLR06}. 

We will now observe that the ideal $\langle d^l-1\rangle$ of $L$ is prime. To see this, notice that $L$ is an Ore extension over the commutative polynomial algebra $\K[h, d]$. So, by~\cite[Prop.\ 2.1]{aB85}, it will be enough to prove that $d^l-1$ generates a $\delta$-stable, $\sigma$-prime ideal of this polynomial algebra, where $\sigma$ and $\delta$ are as in~(\ref{E:LitOre}). In particular,  $\sigma(d)=sd$. This ideal is stable under $\delta$ and $\sigma$ because $d^l-1$ is central in $L=\K[h, d][u;\sigma^{-1}, \delta]$. Consider the prime ideal $I$ of $\K[h, d]$ generated by $d-1$. Since $s$ is a primitive root of unity of order $l$, it follows that 
$$
\bigcap_{i\in\mathbb{Z}} \sigma^{i}(I)=\prod_{0\leq i\leq l-1} (d-s^{i})\K[h, d]=(d^{l}-1)\K[h, d],
$$
so $(d^{l}-1)\K[h, d]$ is indeed a $\sigma$-prime ideal of $\K[h, d]$, as it is the intersection of a $\sigma$-orbit of a prime ideal. Thus, $\langle d^l-1\rangle$ is a prime ideal of $L$.

By the Principal Ideal Theorem,  $\langle d^l-1\rangle$  has height one. Yet,  it is not completely prime, as   $l\geq 2$ and hence the central element  $d^l-1$ factors non-trivially. So $L$ is not a Noetherian UFD.
\end{proof}


\section{The case $f$ conformal and $r\neq 1$ a  root of unity}\label{S:rr1}

The final part of our discussion concerns the case when $f$ is conformal and $r$ is a primitive root of unity of order $l\geq 2$. Since $r\neq 1$ we will assume, without loss of generality, that $\gamma=0$, by Proposition~\ref{P:conf:gamma0rnot1}.

We start with a negative result, which follows immediately from Lemma~\ref{L:ql:h1p}.

\begin{corollary}\label{C:rr1:notUFD}
Let $L=L(f, r, s, 0)$ and assume  $r\neq 1$ is a root of unity. Then $L$ is not a Noetherian UFD.
\end{corollary}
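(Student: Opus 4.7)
The plan is to invoke Lemma~\ref{L:ql:h1p} directly. Since $\gamma=0$ (which we may assume by Proposition~\ref{P:conf:gamma0rnot1}, as $r\neq 1$), we are in the setting of that lemma. By hypothesis, $r$ is a primitive root of unity of order $l\geq 2$. Thus, picking any $\lambda\in\K^{*}$, Lemma~\ref{L:ql:h1p} tells us that $h^{l}-\lambda$ generates a height one prime ideal of $L$.

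The second assertion of the same lemma says that this ideal $\langle h^{l}-\lambda\rangle$ is completely prime if and only if $r=1$. Since we are in the case $l\geq 2$, the ideal $\langle h^{l}-\lambda\rangle$ is therefore \emph{not} completely prime.

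By Definition~\ref{def-UFD}, every height one prime ideal of a Noetherian UFD must be completely prime. Since $L$ has a height one prime that fails this property, $L$ cannot be a Noetherian UFD, which is exactly the claim of the corollary. There is no real obstacle here; the work has already been done in Lemma~\ref{L:ql:h1p}, and the corollary is just the contrapositive applied to the height one prime exhibited there.
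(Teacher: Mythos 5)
Your proof is correct and is exactly the paper's argument: the corollary is stated to follow immediately from Lemma~\ref{L:ql:h1p}, via the height one prime $\langle h^{l}-\lambda\rangle$ which is not completely prime when $l\geq 2$. (The appeal to Proposition~\ref{P:conf:gamma0rnot1} is superfluous, since the statement already fixes $\gamma=0$, but this is harmless.)
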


The remainder of this section is devoted to establishing that, under the current assumptions, 
 $L=L(f, r, s, 0)$ is a Noetherian UFR. The following general result will play, in this section, the role of Propositions~\ref{P:locX} and~\ref{P:locY}.

We consider a Noetherian ring  $R$, with a subring $A$, which is a domain, and such that $R$ is free both as a left and as a right  $A$-module, with basis $S:=\{X^i ~|~i\geq 0\}$. Assume the multiplicative system $S$ satisfies the Ore condition on both sides, and let $\widehat{R}:=RS^{-1}$ be the corresponding localisation.

\begin{lemma}\label{L:rr1:pi}
Let $P$ be a  nonzero prime ideal  of $R$ such that $P \cap S =\emptyset$, and assume that there exists $b \in \widehat{R}$ such that:
\begin{enumerate}
\item $PS^{-1}=\widehat{R}b=b\widehat{R}$;
\item $Xb=\eta bX$, for some central unit $\eta$ of $A$.
\end{enumerate}

Then $P=xR=Rx$, where $e \in \mathbb{Z}$ is minimal such that $bX^e \in R$, and $x=bX^e$.
\end{lemma}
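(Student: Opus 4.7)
My plan is to prove $P = xR = Rx$ in three steps: first establishing that $x$ generates $PS^{-1}$ as a normal element of $\widehat{R}$, then obtaining $xR, Rx \subseteq P$ by contraction, and finally exploiting the minimality of $e$ to deduce the reverse inclusions.

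\textbf{Step 1 (normality in $\widehat{R}$).} From $Xb = \eta bX$ with $\eta$ a central unit of $A$, induction gives $X^{i}b = \eta^{i}bX^{i}$ for every $i\in\mathbb{Z}$. Since $X^{e}$ is a unit in $\widehat{R}$, this yields
\[
\widehat{R}x \;=\; \widehat{R}(bX^{e}) \;=\; (\widehat{R}b)X^{e} \;=\; \widehat{R}b \;=\; PS^{-1},
\]
and, symmetrically, $x\widehat{R} = b\widehat{R} = PS^{-1}$. Thus $x$ is a normal element of $\widehat{R}$ generating $PS^{-1}$. Using the standard fact that $P = PS^{-1}\cap R$ whenever $P$ is a prime disjoint from the Ore set $S$, I conclude $x\in PS^{-1}\cap R = P$, so that $xR\subseteq P$ and $Rx\subseteq P$.

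\textbf{Step 2 (the reverse inclusion $P\subseteq xR$).} Let $p\in P\subseteq x\widehat{R}$ and write $p = x\widehat{s}$ with $\widehat{s}\in\widehat{R}$. By the right Ore property of $S$, choose $k\geq 0$ minimal such that $\widehat{s} = sX^{-k}$ with $s\in R$; then $pX^{k} = xs \in R$. The claim is that $k = 0$. The minimality of $e$ enters here: expanding $b = \sum_{i}X^{i}b_{i}$ in the right $A$-basis of $\widehat{R}$, the choice of $e$ forces $b_{-e}\neq 0$ and $b_{i}=0$ for $i<-e$, so the right $A$-basis expansion $x = \sum_{j\geq 0}X^{j}\alpha_{j}\in R$ has $\alpha_{0}\neq 0$. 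Assuming $k\geq 1$, I expand $s = \sum_{j\geq 0}X^{j}s_{j}$ and examine the lowest-degree $X$-coefficient of $xs$; since $A$ is a domain and $\alpha_{0}\neq 0$, a coefficient-matching computation propagating upward shows $s_{j} = 0$ for all $j < k$. Hence $s = X^{k}s'$ for some $s'\in R$, and $\widehat{s} = X^{k}s'X^{-k}\in R$ (the commutation $Xb = \eta bX$ ensures that conjugation by $X$ in $\widehat{R}$ preserves $R$), contradicting the minimality of $k$. Therefore $k = 0$ and $p = xs\in xR$.

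\textbf{Step 3 (symmetry).} A symmetric argument using that $S$ is also a left denominator set, and the left $A$-basis expansion, yields $P\subseteq Rx$. Combined with Step 1, this gives $P = xR = Rx$.

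\textbf{Main obstacle.} The technical crux is the coefficient-matching of Step 2. Minimality of $e$ produces the nonvanishing of the leading coefficient $\alpha_{0}\in A$, but turning this into a cancellation law for $X^{k}$ requires knowing that left multiplication by $x$ does not create unexpected lower-degree contributions in the $A$-basis expansion. In the applications to generalised down-up algebras, where $R$ is an iterated skew polynomial ring over $A$, this is transparent from the automorphism property of $X$-conjugation; in the abstract setup it is the only step using structural information beyond what is formally stated in the hypotheses.
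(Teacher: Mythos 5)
Your Step 1 and the overall skeleton of Step 2 (clear denominators with a minimal exponent, use the minimality of $e$ to force a nonzero constant term for $x$, then descend on that exponent) are exactly the paper's strategy, and the observation $\alpha_0\neq 0$ is correct. The gap is in the coefficient-matching itself, and it is not the benign issue you flag at the end: it is fatal as written. To read off the low-degree coefficients of $xs$ from the expansions $x=\sum_i X^i\alpha_i$ and $s=\sum_j X^j s_j$ you must rewrite $\alpha_i X^j$ as $X^j$ times an element of $A$, i.e.\ you need conjugation by $X$ to preserve $A$ and to create no lower-degree terms. This is not among the hypotheses, and it is false in the intended applications: there $R=L$, $X=d$, and $A$ is the subalgebra generated by $h$ and $u$, so the relation $du=sud-f(h)$ gives $ud=s^{-1}du+s^{-1}f(h)$, and multiplying a coefficient $u\in A$ by $d$ genuinely produces a degree-zero term. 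Likewise $dud^{-1}=su-f(h)d^{-1}\notin L$ when $f\neq 0$, so your parenthetical claim that $Xb=\eta bX$ ensures conjugation by $X$ preserves $R$ is a non sequitur (it is a statement about the single element $b$) and the step $\widehat{s}=X^k s'X^{-k}\in R$ collapses. Your closing remark that the needed property is ``transparent from the automorphism property of $X$-conjugation'' in the down-up setting is therefore also wrong.

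The paper's proof is arranged so that no element of $A$ is ever commuted past a power of $X$: the only commutation used is $X^i x=\eta^i xX^i$, which follows from hypothesis (b). Concretely, for $P\subseteq Rx$ one writes $y\in P$ as $y=ux$ with $u\in\widehat{R}$, clears denominators to get $yX^t=rx$ with $r\in R$ and $t\geq 0$ minimal, and expands $r=\sum r_iX^i$, $x=\sum x_jX^j$, $y=\sum y_iX^i$ in the \emph{left} $A$-basis. Then
\begin{equation*}
rx=\sum_i r_iX^i x=\sum_i r_i\eta^i xX^i=\sum_{i,j}r_i\eta^i x_jX^{i+j},
\end{equation*}
with all coefficients lying in $A$ to the left of the basis elements, while $yX^t=\sum y_iX^{i+t}$ has no degree-$0$ component when $t\geq 1$; comparing degree-$0$ coefficients gives $r_0x_0=0$, hence $r_0=0$ since $A$ is a domain and $x_0\neq 0$, hence $rX^{-1}\in R$ and $yX^{t-1}=(\eta rX^{-1})x$, contradicting the minimality of $t$. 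If you insist on your orientation, with $x$ as a left factor, you must clear denominators on the left ($X^t u\in R$, giving $X^t y=\eta^t xr$) and work in the right $A$-basis, so that the side on which the powers of $X$ are cleared matches the side of the basis you control via $xX^j=\eta^{-j}X^jx$. Clearing on the right while multiplying by $x$ on the left, as you do, leaves you unable to expand either side of $pX^k=xs$ in a usable basis.
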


\begin{proof}
Observe that, since $b\neq 0$, a minimal $e \in \mathbb{Z}$ such that $bX^e \in R$ exists; also, $Xx=\eta xX$. We will prove that $P=Rx$. As $Xb=\eta bX$, $e$ is also minimal such that $X^e b \in R$, and $X^e b=\eta^{e}x$, so a similar argument will show that $P=xR$, using the fact that $\{X^i ~|~i\geq 0\}$ is  a free basis for $R$ as a right $A$-module.

By construction, it is clear that $Rx \subseteq P$, as $x\in PS^{-1}\cap R=P$. Let $y \in P$. Then $y \in PS^{-1}=\widehat{R}b=\widehat{R}xX^{-e}= \widehat{R}x$, as $x$ and $X$ $\eta$-commute. Hence, there exists $u \in \widehat{R}$ such that 
$y=ux$. Moreover, there exists $t \geq 0$ such that $u X^t \in R$. Therefore, $y X^t=uxX^t=\eta^{-t} uX^t x$, i.e., there exist $t \geq 0$ and $r \in R$ such that 
$yX^t=rx$. We choose a minimal such  $t$.

Assume that $t \geq 1$. Write 
$$r=\sum_{i=0}^k r_i X^i, ~~ y=\sum_{i=0}^k y_i X^i,~~x=\sum_{i=0}^k x_i X^i,$$
where $r_i,y_i,x_i \in A$. Note that $x_0 \neq 0$, as otherwise $xX^{-1} \in R$, so that $bX^{e-1} \in R$, contradicting the minimality of $e$. 

On the other hand, as $Xx=\eta xX$, the equality $yX^t=rx$ can be written as follows:
$$
 \sum_{i=0}^ky_iX^{i+t}=\sum_{i=0}^k r_i X^i bX^e  
 =\sum_{i=0}^k r_i \eta^i bX^{e+i}  
 =\sum_{i=0}^k r_i \eta^i xX^{i}  
 =\sum_{i,j=0}^k r_i\eta^ix_jX^{i+j}.
$$
As $t \geq 1$, identifying the degree 0 coefficients yields $0=r_0x_0$. As $x_0 \neq 0$ and $A$ is a domain, this forces $r_0=0$. 
Hence, $rX^{-1} \in R$ and $yX^{t-1}=rxX^{-1}=\eta rX^{-1}x$. This contradicts the minimality of $t$. Thus, $t=0$ and $y =rx \in Rx$, as desired.
\end{proof}

\begin{proposition}\label{P:rr1:udnP}
Let $L=L(f, r, s, 0)$, with $f$  conformal. If $P$ is a prime ideal of $L$ of height one, which either does not contain any power of $d$ or does not contain any power of $u$, then $P$ is a principal ideal, generated by a normal element of $L$.
\end{proposition}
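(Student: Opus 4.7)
The plan is to apply Lemma~\ref{L:rr1:pi}. I argue the case in which $P$ contains no power of $d$; the symmetric case, where $P$ contains no power of $u$, is handled by the same method after first establishing the analogue of Lemma~\ref{L:conf:locd} for the $u$-localization, namely, the isomorphism $LU^{-1}\to L(0,r,s,0)\tilde U^{-1}$ sending $u\mapsto\tilde u$, $h\mapsto\tilde h$, $d\mapsto\tilde d+g(r\tilde h)\tilde u^{-1}$, which is checked by a direct computation using the conformality identity $f(h)=sg(h)-g(rh)$.

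To invoke Lemma~\ref{L:rr1:pi} I set $R=L$, $X=d$, $S=D=\{d^{i}\}_{i\geq 0}$, and let $A$ be the subalgebra of $L$ generated by $h$ and $u$. Since $\gamma=0$, the relation $hu=ruh$ exhibits $A$ as a quantum plane, hence a domain. A routine verification shows that $\sigma'(h)=rh$, $\sigma'(u)=su$, $\delta'(h)=0$, $\delta'(u)=-f(h)$ define an automorphism $\sigma'$ and a $\sigma'$-derivation $\delta'$ of $A$, and that $L=A[d;\sigma',\delta']$ is an Ore extension; in particular $L$ is free over $A$ on both sides with basis $\{d^{i}\}_{i\geq 0}$, and $D$ is a left and right denominator set in $L$.

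By Lemma~\ref{L:conf:locd} applied with $p=g$, there is an isomorphism $\phi\colon LD^{-1}\to L(0,r,s,0)\tilde D^{-1}$ fixing $h$ and $d$. Since $P\cap D=\emptyset$, the ideal $PD^{-1}$ is a nonzero prime of $LD^{-1}$, and its image under $\phi$ contracts to a height one prime of $L(0,r,s,0)$. By Proposition~\ref{P:f0:g0:UFR}, $L(0,r,s,0)$ is a Noetherian UFR, so this contracted prime is principal, generated by a normal element $b_{0}$, which we view inside $L(0,r,s,0)\tilde D^{-1}$.

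The main obstacle is to verify the scalar commutation $\tilde d\,b_{0}=\mu\,b_{0}\,\tilde d$ with $\mu\in\K^{*}$, which is hypothesis (ii) of Lemma~\ref{L:rr1:pi}. For this, I present $L(0,r,s,0)=B[\tilde d;\tau]$, where $B$ is the quantum plane on $\tilde h,\tilde u$ and $\tau$ is the automorphism of $B$ given by $\tau(\tilde h)=r\tilde h$, $\tau(\tilde u)=s\tilde u$; accordingly $L(0,r,s,0)\tilde D^{-1}=B[\tilde d^{\pm 1};\tau]$ is a skew Laurent polynomial ring, whose unit group is $\K^{*}\cdot\{\tilde d^{n}:n\in\mathbb{Z}\}$ (using that the unit group of $B$ is $\K^{*}$). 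Conjugation by $\tilde d$ realises $\tau$ and stabilises the principal ideal $\langle b_{0}\rangle$, whence $\tau(b_{0})=v\,b_{0}$ for some unit $v$; since $\tau$ preserves the $\tilde d$-degree, a degree count forces $v\in\K^{*}$, and therefore $\tilde d\,b_{0}=v\,b_{0}\,\tilde d$. Pulling back through $\phi$, the element $b:=\phi^{-1}(b_{0})\in LD^{-1}$ generates $PD^{-1}$ on both sides and satisfies $db=vbd$; with $v\in\K^{*}\subset\Z(A)$ a unit of $A$, Lemma~\ref{L:rr1:pi} yields $P=xL=Lx$ with $x=bd^{e}$ for the minimal $e\in\mathbb{Z}$ such that $bd^{e}\in L$, as required.
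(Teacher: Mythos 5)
Your proof is correct and follows essentially the same route as the paper's: localise at the powers of $d$, use Lemma~\ref{L:conf:locd} to identify the localisation with a localised quantum coordinate ring of affine $3$-space, invoke the Noetherian UFR property of the latter to obtain a normal, $q$-central generator of $PD^{-1}$, and descend to $L$ via Lemma~\ref{L:rr1:pi}. The additional detail you supply (the Ore-extension presentation $L=A[d;\sigma',\delta']$ over the quantum plane $A=\K\langle h,u\rangle$, and the unit-group/degree argument showing the normal generator $q$-commutes with $\tilde d$) is precisely what the paper's proof leaves implicit.
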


\begin{proof}
By Lemma~\ref{L:conf:locd}, the localisation $\widehat{L}$ of $L$ at the denominator set $D=\{ d^{i}\}_{i\geq 0}$ is isomorphic to a quantum coordinate ring of affine $3$-space over $\K$, localised at the powers of one of its canonical generators. As in Section~\ref{SS:f0:g0}, it follows that $\widehat{L}$ is a Noetherian UFR.

If $P$ is a height one prime ideal of $L$ which is disjoint from $D$, then $PD^{-1}$ is a height one prime ideal of $\widehat{L}$, so it is generated by a normal element  $b\in\widehat{L}$. It is easy to see that in a quantum coordinate ring the normal elements are $q$-central, so there is $\eta\in\K^{*}$ such that $db=\eta bd$. Thus, by Lemma~\ref{L:rr1:pi}, $P$ is a principal ideal, generated by some normal element $x\in L$.

The statement regarding $u$ follows similarly.
\end{proof}

So it remains to consider the prime ideals that contain both a power of $d$ and a power of $u$. We start by discussing the simpler case where $s$ is not a root of unity.

\begin{proposition}\label{P:rr1:snr1}
Let $L=L(f, r, s, 0)$, with $f\neq 0$  conformal and $r\neq 1$  a  root of unity. If  $s$ is not a root of unity,  then $L$ is a Noetherian UFR, but not a Noetherian UFD. 
\end{proposition}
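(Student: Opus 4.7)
The ``not a Noetherian UFD'' half is immediate from Corollary~\ref{C:rr1:notUFD}, since $r \neq 1$ is a root of unity. For the Noetherian UFR half, the plan is to show that every height one prime of $L$ is principal. By Proposition~\ref{P:rr1:udnP}, any height one prime $P$ that fails to contain some power of $d$ or some power of $u$ is already principal, so the task reduces to ruling out height one primes that contain both a power of $d$ and a power of $u$.

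Suppose such a height one prime $P$ exists. Primality of $P$ forces $d, u \in P$, and then the defining relation $du - sud + f(h) = 0$ yields $f(h) \in P$. Hence $P \cap \K[h]$ is a nonzero prime ideal of $\K[h]$ and, since $\K$ is algebraically closed, it has the form $\langle h - \lambda \rangle$ for some root $\lambda \in \K$ of $f$. Thus $P$ contains $M := \langle d, u, h - \lambda \rangle$; inspecting $L/M$ one sees that the three generators map to $0$, $0$, $\lambda$, so $L/M \cong \K$. Hence $M$ is a proper maximal ideal and $P = M$.

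To contradict $\mathrm{ht}(P) = 1$, the plan is to exhibit a nonzero prime strictly contained in $M$. Let $l \geq 2$ denote the order of $r$. If $\lambda = 0$ (so $f(0) = 0$), then $\langle h \rangle$ is a height one completely prime ideal of $L$ by Lemma~\ref{L:ql:h1p}, and $\langle h \rangle \subsetneq M$ because $L/\langle h \rangle$ is an infinite-dimensional quantum plane, whereas $L/M \cong \K$. If $\lambda \neq 0$, then $\lambda^l \in \K^*$ and Lemma~\ref{L:ql:h1p} provides the height one prime $\langle h^l - \lambda^l \rangle$; it lies in $M$ because $h \equiv \lambda \pmod{M}$, and is strictly contained in $M$ by the same dimension comparison. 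Either way $\mathrm{ht}(M) \geq 2$, the desired contradiction. The only non-routine point is locating a height one prime strictly inside $M$, which is handled cleanly by Lemma~\ref{L:ql:h1p}; notably the hypothesis ``$s$ not a root of unity'' plays no role in this argument, the essential ingredient being that $r$ is a nontrivial root of unity.
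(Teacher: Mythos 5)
Your treatment of the UFD half and your reduction, via Proposition~\ref{P:rr1:udnP}, to primes containing both a power of $d$ and a power of $u$ agree with the paper. The gap is in the very next step: from $d^k, u^k\in P$ you conclude ``primality of $P$ forces $d,u\in P$''. That inference is only valid for normal elements (where $x^kL=(xL)^k\subseteq P$ gives $xL\subseteq P$), and here $d$ and $u$ are \emph{not} normal: $du=sud-f(h)$ with $f\neq 0$ shows $du\notin Ld$. The claim is in fact false in general. The primes containing powers of both $d$ and $u$ are exactly the ideals $Q_\lambda$ of Theorem~\ref{T:ql}, and $L/Q_\lambda\simeq\mathrm{End}_\K(L_\lambda)$ is a $k\times k$ matrix algebra where $k$ is the minimal positive integer with $P_k(r^{k-1}\lambda)=0$; when $k\geq 2$ the elements $d$ and $u$ act nontrivially on $L_\lambda$, so $d,u\notin Q_\lambda$ and $Q_\lambda$ is not of the form $\langle d,u,h-\lambda\rangle$ (whose quotient would be $\K$, not a matrix algebra). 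So your identification $P=M$ collapses, and with it the rest of the argument as written.

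The paper's proof closes exactly this gap, and does so using the hypothesis you flagged as unused. Since $r$ is a root of unity and $s$ is not, $(sr^{-m})^k\neq 1$ for all $k>0$, so by Lemma~\ref{L:ql:pk} every $P_k$ is nonzero; Theorem~\ref{T:ql} then applies and identifies $P$ with some $Q_\lambda$. One then observes that $h\in Q_0$ (if $\lambda=0$) or $h^l-\lambda^l\in Q_\lambda$ (if $\lambda\neq 0$, since $h^l$ acts on $L_\lambda$ as the scalar $\lambda^l$), and since $Q_\lambda$ is \emph{not} principal while $\langle h\rangle$ and $\langle h^l-\lambda^l\rangle$ are height one primes by Lemma~\ref{L:ql:h1p}, the containment is proper and $P$ has height at least two. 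Your final paragraph --- locating a smaller prime inside $P$ via Lemma~\ref{L:ql:h1p} --- is the right idea and matches the paper's endgame, but it must be applied to $Q_\lambda$ rather than to $M$. Your remark that ``$s$ not a root of unity plays no role'' should have been a warning sign: it is precisely what guarantees $P_k\neq 0$ for all $k>0$, which Theorem~\ref{T:ql} requires; when $s$ is also a root of unity the conclusion still holds but needs the separate argument of Lemma~\ref{L:rsr1}.
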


\begin{proof}
In view of Corollary~\ref{C:rr1:notUFD} and Proposition~\ref{P:rr1:udnP}, it is enough to show that the height one prime ideals of $L$ either do not contain any power of $d$ or do not contain any power of $u$. 

Let $P$ be a prime ideal of $L$ which contains a power of $d$ and a power of $u$. Since $r$ is a root of unity and $s$ is not, it follows that $(s/r^{m})^{k}\neq1$, for all $k>0$. Thus, by Lemma~\ref{L:ql:pk}, the polynomials $P_{k}$ are all nonzero, for $k>0$. Hence, $P=Q_{\lambda}$, for some $\lambda\in\K$, by Theorem~\ref{T:ql}.

If $\lambda=0$, then $h\in P$; otherwise $h^{l}-\lambda^{l}\in P$, where $l\geq 2$ is the order of $r$. Therefore, either $\langle h \rangle\subsetneq P$ or $\langle h^{l}-\lambda^{l} \rangle\subsetneq P$, as $P=Q_{\lambda}$ is not principal, so $P$ has height at least two, by Lemma~\ref{L:ql:h1p}, thus proving our claim.
\end{proof}

In the next lemma we deal with the case in which $s$ is a root of unity. Note that if $r$ and $s$ are roots of unity and $f$ is conformal, then Lemma~\ref{L:ql:pk} guarantees the existence of a positive integer $k$ such that $P_{k}=0$. For any such $k$, the elements $u^{k}$ and $d^{k}$ are normal.

\begin{lemma}\label{L:rsr1}
Let $L=L(f, r, s, 0)$, with $f\neq 0$ conformal, and assume $r$ and $s$ are  roots of unity. Take $k>0$ minimal such that $P_{k}=0$. Then, $u^{k}$ and $d^{k}$ are normal and each generates a height one prime ideal of $L$.  
\end{lemma}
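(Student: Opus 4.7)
Normality of $u^k$ is immediate from Lemma~\ref{L:ql:pk}: the identity $du^k = s^k u^k d - P_k(h)u^{k-1}$ collapses to $du^k = s^k u^k d$ when $P_k = 0$. Together with $hu^k = r^k u^k h$ (from iterating $hu = ruh$), this shows $u^k$ commutes with each generator of $L$ up to a nonzero scalar, so $u^k$ is normal. The anti-automorphism of $L$ that fixes $h$ and exchanges $d$ and $u$ (which preserves the defining relations when $\gamma = 0$) transfers the argument to $d^k$. Once primeness is established, the Principal Ideal Theorem \cite[4.1.11]{MR01} gives height at most one for $\langle u^k \rangle$ and $\langle d^k \rangle$; since $L$ is a domain containing these nonzero elements, the height is exactly one.

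For the primeness step, I would use the realization of $L$ as the generalized Weyl algebra $L = D(\sigma,a)$ with $D = \K[h, z]$, $z = ud - g(h)$, $a = ud = z + g(h)$, $\sigma(h) = rh$, $\sigma(z) = sz$. The condition $P_k = 0$ translates to $\sigma^k(a) = s^k a$, equivalently $g(r^k h) = s^k g(h)$, and the minimality of $k$ forces the elements $\sigma^0(a), \sigma(a), \ldots, \sigma^{k-1}(a)$ to be pairwise non-associate irreducibles in the UFD $D$. A short computation gives $d^k u^k = s^k \prod_{i=0}^{k-1}\sigma^i(a)$, so modulo $\langle u^k \rangle$ this product of distinct irreducibles vanishes. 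In particular, $\bar L := L/\langle u^k \rangle$ has explicit zero-divisors, and primeness cannot simply be read off the intersection $\langle u^k\rangle \cap L_0$ (which is not a prime ideal of $L_0 = D$).

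The main obstacle is thus to prove $\bar L$ is prime despite the zero-divisors above. My strategy, modelled on Proposition~\ref{P:r1:g0:sru:Az}, is to localize $\bar L$ at a suitable central element (for example the image of $z$, or a central power) and to realize the localization $\bar L'$ as an Azumaya algebra of rank $k^2$ over a commutative Noetherian domain (its center, which after the localization explicitly computes as a Laurent polynomial algebra). Azumaya algebras over domains are prime, so $\bar L'$ is prime. One then verifies that $\bar L \to \bar L'$ is injective by showing that $\bar L$ has no nonzero torsion with respect to the chosen denominators, which follows from the $\mathbb{Z}$-grading of $\bar L$ and the fact that each of its homogeneous components is a quotient of $D$ by a principal ideal whose generator is coprime to the localizing denominator (a consequence of the minimality of $k$). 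Injectivity of $\bar L \hookrightarrow \bar L'$ then yields primeness of $\bar L$: nonzero ideals of $\bar L$ extend to nonzero ideals of the prime ring $\bar L'$, whose product is consequently nonzero. The argument for $d^k$ is symmetric via the anti-automorphism above.
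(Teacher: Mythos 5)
Your normality argument for $u^{k}$ and $d^{k}$ and the appeal to the Principal Ideal Theorem are fine, and your diagnosis of where the difficulty lies (the contraction $\langle u^{k}\rangle\cap L_{0}$ is generated by a product of $k$ pairwise non-associate irreducibles of $D=\K[h,z]$, so primeness cannot be read off in degree zero) is exactly right. The gap is in the step you yourself flag as the main obstacle: the assertion that the localisation $\bar L'$ of $\bar L=L/\langle u^{k}\rangle$ at (a central power of) the image of $z$ is an Azumaya algebra of rank $k^{2}$ over a domain is not proved, and it is false in general. By the Artin--Procesi theorem such an algebra admits no irreducible representation of dimension smaller than $k$. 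But whenever $f$ has a root $\lambda$ with $g(\lambda)\neq 0$, the module $L_{\lambda}$ of Section~\ref{SS:ql} is one-dimensional (since $P_{1}(\lambda)=f(\lambda)=0$), $u$ acts on it as $0$ (so $u^{k}$ is killed), and $z=ud-g(h)$ acts as the nonzero scalar $-g(\lambda)$; hence $L_{\lambda}$ descends to a one-dimensional irreducible representation of $\bar L'$ no matter which power of $\bar z$ you invert. A concrete instance with $k\geq 2$: take $r=-1$, $s$ a primitive fourth root of unity, $g=h-1$, $f=sg(h)-g(rh)=(s-r)h-(s-1)$ (conformal, since $s\neq 1,r$); then $k=4$, while $\lambda=(s-1)/(s-r)$ satisfies $f(\lambda)=0$ and $g(\lambda)=(r-1)/(s-r)\neq 0$. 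So $\bar L'$ has a one-dimensional irreducible representation and cannot be Azumaya of rank $16$; the proposed route to primeness collapses. (Inverting $z$ only removes representations on which $z$ acts non-invertibly; it does not remove these. This is also why the analogy with Proposition~\ref{P:r1:g0:sru:Az} breaks down: there one localises the algebra itself at $Z=DU-UD$, which forces every surviving irreducible to have the maximal dimension; here the offending representations survive the localisation.)

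The paper proves primeness by a different and more robust mechanism, with no Azumaya input. In the localisation $\widehat{L}=LD^{-1}=\K[h,z][d^{\pm 1};\tau]$ one checks, using the minimality of $k$, that $\tau^{i}(\xi)$ and $\tau^{j}(\xi)$ (where $\xi=z+g(h)=ud$) are associates if and only if $k$ divides $i-j$, so that $I=\prod_{1-k\leq i\leq 0}\tau^{i}(\xi)\K[h,z]=\bigcap_{i\in\mathbb{Z}}\tau^{i}(\xi\K[h,z])$ is a $\tau$-prime ideal of $\K[h,z]$; Bell's criterion then shows $I\widehat{L}=u^{k}d^{k}\widehat{L}=u^{k}\widehat{L}$ is prime (no comaximality of the factors $\tau^{i}(\xi)$ is needed, which is fortunate since they are not comaximal in general), and Lemma~\ref{L:rr1:pi} transports principality and primeness back to $L$. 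If you wish to keep a representation-theoretic flavour you would need to control \emph{all} finite-dimensional irreducibles of $\bar L$, which is precisely the work the $\tau$-primeness argument avoids.
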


\begin{proof}
We will prove the statement for $u^{k}$; the result for $d^{k}$ will thus follow, by symmetry.

Consider the Ore set $D=\left\{ d^{i}\right\}_{i\geq 0}$ in $L$ and the localisation $\widehat{L}=LD^{-1}$. Recall that $z:=ud-g(h)$ is normal and satisfies $zh=hz$, $dz=szd$ and $zu=suz$ (see Section~\ref{SS:conf}). It is easy to see that $h$ and $z$ generate a (commutative) polynomial algebra in two variables, $\K[h, z]$, and $\widehat{L}=\K[h, z][d^{\pm 1}; \tau]$, where $\tau (h)=rh$, $\tau (z)=sz$, with $u=(z+g(h))d^{-1}$.

Let $\xi=z+g(h)\in\K[h, z]$. This is an irreducible polynomial in the polynomial algebra $\K[h, z]$, hence it generates a prime ideal $P=\xi\K[h, z]$. Furthermore, $\tau^{i}(\xi)$ and $\tau^{j}(\xi)$ are associated irreducible polynomials if and only if $k$ divides $i-j$. The latter follows from the minimality of $k$, as $P_{i}(h)=0\iff s^{i}g(h)=g(r^{i}h) \iff$ $k$ divides $i$.

Thus,
$$
I:=\bigcap_{i\in\mathbb{Z}}\tau^{i}(P)=\bigcap_{i\in\mathbb{Z}}\tau^{i}(\xi)\K[h, z]=\bigcap_{1-k\leq i\leq 0}\tau^{i}(\xi)\K[h, z]
=\prod_{1-k\leq i\leq 0}\tau^{i}(\xi)\K[h, z]
$$
is a $\tau$-prime ideal of $\K[h, z]$. It follows (e.g. by~\cite[Prop.\ 2.1]{aB85}) that $Q:=I\widehat{L}$ is a prime ideal of $\widehat{L}$.

\medskip

\textit{Claim:} $\displaystyle{\prod_{1-n\leq i\leq 0}\tau^{i}(\xi)=u^{n}d^{n}}$, for all $n\geq 0$.

\medskip

The claim above can be  readily established by induction. In particular, $Q=u^{k}d^{k}\widehat{L}=u^{k}\widehat{L}$.

It remains to show that the prime ideal that $Q$ contracts to in $L$ is generated by $u^{k}$. This follows by applying Lemma~\ref{L:rr1:pi} to the contraction of $Q$ to $L$, and noting that:
\begin{itemize}
\item $du^{k}=s^{k}u^{k}d$, and 
\item for $n\in\mathbb{Z}$, $u^{k}d^{n}\in L\iff n\geq 0.$
\end{itemize}

Finally, the height of $\langle u^{k} \rangle$ is one, by the Principal Ideal Theorem.
\end{proof}

Our final result finishes the classification of which generalized down-up algebras are Noetherian UFR's.

\begin{theorem}\label{T:rr1:UFR}
Let $L=L(f, r, s, 0)$, with $f\neq 0$ conformal and $r\neq 1$ a root of unity. Then $L$ is a Noetherian UFR but not a Noetherian UFD. 
\end{theorem}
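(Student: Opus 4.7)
The ``not a Noetherian UFD'' half of the statement is immediate from Corollary~\ref{C:rr1:notUFD}. For the ``Noetherian UFR'' assertion, the case in which $s$ is not a root of unity is dealt with in Proposition~\ref{P:rr1:snr1}, so I will assume that $s$ is a root of unity. Combined with the conformality of $f$ and Lemma~\ref{L:ql:pk}, this assumption guarantees the existence of a minimal integer $k > 0$ with $P_k = 0$; Lemma~\ref{L:rsr1} then shows that $u^k$ and $d^k$ are normal in $L$ and each generates a height one prime ideal.

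To prove that $L$ is a Noetherian UFR, I need to show that every height one prime $P$ of $L$ is principal. Proposition~\ref{P:rr1:udnP} already handles primes that miss a power of $d$ or a power of $u$, so it remains to treat a height one prime $P$ containing some $u^n$ and some $d^m$ with $n, m > 0$. My strategy is to show that $u^k \in P$; since $\langle u^k \rangle$ is a height one prime contained in the height one prime $P$, the two must coincide and $P$ is principal. To see that $u^k \in P$, note first that $P$ being a two-sided ideal gives $u^j \in P$ for all $j \geq n$, so I may assume $n \geq k$. Writing $n = qk + c$ with $0 \leq c < k$: if $c = 0$ then $u^n = (u^k)^q \in P$ and, since $u^k L$ is a two-sided ideal, iterated primality of $P$ yields $u^k \in P$; if instead $c > 0$, normality of $u^{qk}$ furnishes an automorphism $\eta$ of $L$ with $u^{qk} a = \eta(a) u^{qk}$ for all $a \in L$, so that
\[
u^{qk}\, a\, u^c\, b \;=\; \eta(a)\, u^n\, b \;\in\; P \qquad (a, b \in L),
\]
and hence the product of the two-sided ideals $u^{qk} L$ and $L u^c L$ lies in $P$. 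By primality, either $u^{qk} \in P$, reducing to the previous case, or $u^c \in P$ with $0 < c < k$, which combined with the opening observation gives $u^k \in P$.

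The main technical obstacle is exactly this last case, $c > 0$: passing from the elementwise information $u^n \in P$ to the ideal-theoretic statement $u^k \in P$ requires exploiting both the normality of $u^{qk}$ and the two-sided primality of $P$ in the form of a containment of products of ideals. Once the containment $\langle u^k \rangle \subseteq P$ is secured, the height comparison afforded by Lemma~\ref{L:rsr1} finishes the argument and shows $P = \langle u^k \rangle$ is principal.
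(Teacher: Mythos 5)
Your proposal is correct and takes essentially the same route as the paper: reduce via Corollary~\ref{C:rr1:notUFD}, Proposition~\ref{P:rr1:snr1} and Proposition~\ref{P:rr1:udnP} to a height one prime $P$ containing powers of both $d$ and $u$, deduce $u^{k}\in P$ from normality of $u^{k}$ and primality of $P$, and invoke Lemma~\ref{L:rsr1}. The only (harmless) difference is in the last step: the paper notes that the inclusion $\langle u^{k}\rangle\subseteq P$ must be proper because $\langle u^{k}\rangle$ contains no power of $d$, so no such height one prime exists, whereas you conclude $P=\langle u^{k}\rangle$ is principal --- either way the case is disposed of.
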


\begin{proof}
By Proposition~\ref{P:rr1:snr1}, it remains to consider the case where $s$ is a root of unity (possibly equal to $1$), and by Corollary~\ref{C:rr1:notUFD} and Proposition~\ref{P:rr1:udnP}, it will be enough to show that there are no height one prime ideals of $L$ which contain both a power of $d$ and a power of $u$. 

Let $P$ be a prime ideal of $L$ which contains a power of $d$ and a power of $u$. Let $k>0$ be minimal such that $P_{k}=0$. Since $u^{k}$ is normal, we must have $u^{k}\in P$, so P contains the height one prime ideal $\langle u^{k} \rangle$, by Lemma~\ref{L:rsr1}. So $P$ does not have height one, as  $\langle u^{k} \rangle$ contains no power of $d$.
\end{proof}


\section{Proofs of Theorems A and B}

In this final section, we start by proving Theorem~B, which gives  a complete classification of the generalized down-up algebras which are a Noetherian UFR, and then we prove Theorem~A.  We also specialise our results to down-up algebras, as introduced by Benkart and Roby in~\cite{BR98}.


\begin{proof}[Proof of Theorem B]
Assume first that $\gamma=0$. Then the condition \emph{there exists  $\zeta\neq \gamma/(r-1)$ such that $f(\zeta)=0$} is equivalent to the condition \emph{$f$ is not a monomial}, and the condition \emph{$\langle r, s\rangle$ is a free abelian group of rank $2$} is equivalent to the condition \emph{$r$ is not a root of unity and $\tau=0$}. Thus, in this case, the result follows from Theorem~\ref{T:fnconf:class}, Proposition~\ref{P:f0:g0:UFR}, Theorem~\ref{T:rnru:fc:UFR}, Theorem~\ref{T:r1:g0:snru:UFD}, Theorem~\ref{T:r1:g0:sru:UFR} and Theorem~\ref{T:rr1:UFR}.

Now assume that $\gamma\neq 0$ and $r=1$. Then, by Proposition~\ref{P:conf:ris1}, $f$ is conformal, and the result follows from Proposition~\ref{P:f0:gn0:UFR} and Theorem~\ref{T:r1:gno:UFR}.

Finally, if $\gamma\neq 0$ and $r\neq 1$, then Proposition~\ref{P:conf:gamma0rnot1} asserts that $L$ is isomorphic to a generalized down-up algebra $L(\tilde{f}, r, s, 0)$, such that $f$ is conformal in $L(f, r, s, \gamma)$ if and only if $\tilde{f}$  is conformal in $L(\tilde{f}, r, s, 0)$. Furthermore, by the proof of this result (see~\cite[Prop.\ 1.7]{CL09}), we can take $\tilde{f}(h)=f(\frac{h+\gamma}{r-1})$. Hence, in this case, the result follows from applying our previously established criteria to $L(\tilde{f}, r, s, 0)$.
\end{proof}

To finish the classification, we just need to determine the generalized down-up algebras which are a Noetherian UFD, and prove Theorem A.


\begin{proof}[Proof of Theorem A]
It will be enough to establish this result in the case $\gamma=0$, and the case $\gamma\neq 0$, $r=1$, by Proposition~\ref{P:conf:gamma0rnot1}, as the statement does not involve $f$ or $\gamma$. So we assume that either $\gamma=0$ or $r=1$. 
\begin{itemize}
\item If $f$ is not conformal then $\gamma=0$, by Proposition~\ref{P:conf:ris1}, and thus, by Lemma~\ref{L:conf:gamma0}, $\langle r, s \rangle=\langle r \rangle$. Then Theorem~\ref{T:fnconf:class} establishes the result.
\item If $f=0$ and $\gamma=0$, then the result follows from Propositions~\ref{P:f0:g0:UFR} and~\ref{P:f0:g0:UFD}.
\item If $f=0$ and $\gamma\neq 0$, then we assume $r=1$ and the result follows from Proposition~\ref{P:f0:gn0:UFR} and Theorem~\ref{T:f0:gn0:UFD}.
\item If $f\neq0$ is conformal and $r$ is not a root of unity, then we assume $\gamma=0$  and the result follows from Theorems~\ref{T:rnru:fc:UFR} and~\ref{T:rnru:fc:UFD}.
\item If $f\neq0$ is conformal, $r=1$ and $\gamma\neq 0$, then Theorems~\ref{T:r1:gno:UFR} and~\ref{T:r1:gno:UFD} establish the result.
\item If $f\neq0$ is conformal, $r=1$ and $\gamma=0$, then Proposition~\ref{P:conf:ris1} implies that $s\neq 1$. Thus, Theorems~\ref{T:r1:g0:snru:UFD} and~\ref{T:r1:g0:sru:UFR} imply the result.
\item If $r\neq 1$ is a root of unity, then we can assume that $\gamma=0$, and the result follows directly from Corollary~\ref{C:rr1:notUFD}.
\end{itemize}
\end{proof}

We note that the hypothesis that $L$ be a Noetherian UFR, in Theorem~A, is essential, as the following example illustrates. Let $r\in\K^{*}$ be a non-root of unity, $s\in\{1, r\}$ and $f=h\in\K[h]$.  Then $L=L(h, r, s, 1)$ is not a Noetherian UFD, by Proposition~\ref{P:conf:gamma0rnot1} and Theorem~B(a). Yet, $\langle r, s\rangle\simeq \mathbb{Z}$ is torsionfree. Notice that $L(h, r, s, 1)$ is isomorphic to the down-up algebra $A(r+s, -rs, 1)$.

In general, the down-up algebra $A(\alpha, \beta, \gamma)$, as defined in~\cite{BR98}, can be viewed as the generalized down-up algebra $L(h, r, s, \gamma)$, where $\alpha=r+s$ and $\beta=-rs$ (see~\cite[Lem.\ 1.1]{CL09} for more details). So we have:

\begin{corollary}\label{mainDUA}
Let $A=A(\alpha, \beta, \gamma)$ be a down-up algebra over $\K$ with $\beta\neq 0$. Let $r, s\in\K$ be the roots of $h^{2}-\alpha h-\beta$. Then $A$ is a Noetherian UFR  except if $\gamma\neq 0$, $\beta$ is not a root of unity and one of the following conditions is satisfied:
\begin{enumerate}
\item $\alpha+\beta=1$;
\item $\alpha^{2}+4\beta=0$;
\item $\langle r, s \rangle$ is a free abelian group of rank $2$.
\end{enumerate}
Furthermore, $A$ is a Noetherian UFD if and only if $A$ is a Noetherian UFR and $\langle r, s \rangle$ is torsionfree.
\end{corollary}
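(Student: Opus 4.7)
The plan is to deduce the corollary directly from Theorems A and B by specialising $f = h$, $\alpha=r+s$, $\beta=-rs$, and tracking which of the listed exceptions translate to which of the conditions (a)--(c). The first observation I would make is the dictionary between exceptional conditions on $r, s$ and conditions on $\alpha, \beta$: since $(h-r)(h-s) = h^2 - \alpha h - \beta$, one has $1 \in \{r,s\}$ precisely when $\alpha+\beta=1$, and $r=s$ precisely when $\alpha^2+4\beta=0$; moreover $rs=-\beta\neq 0$, so $\beta$ is a root of unity iff $rs$ is. In particular, if $\langle r,s\rangle$ is free abelian of rank~$2$, then no relation $(rs)^n=1$ can hold, so $\beta$ is automatically not a root of unity; and if $s=1$ (resp.\ $r=s$), then $\beta=-r$ (resp.\ $\beta=-r^2$), so ``$\beta$ not a root of unity'' amounts to ``$r$ not a root of unity.''

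Next I would determine when $f=h$ is conformal in $L(h,r,s,\gamma)$. If $\gamma=0$, then $f$ is conformal iff $s\neq r$, by Lemma~\ref{L:conf:gamma0} applied to $\mathrm{supp}\,(f)=\{1\}$. If $\gamma\neq 0$ and $r=1$, Proposition~\ref{P:conf:ris1} says $f$ is conformal. If $\gamma\neq 0$ and $r\neq 1$, Proposition~\ref{P:conf:gamma0rnot1} identifies $L$ with $L(\tilde f,r,s,0)$ where $\tilde f(h)=f((h+\gamma)/(r-1))$, which has support $\{0,1\}$, so by Lemma~\ref{L:conf:gamma0} $\tilde f$ is conformal iff $s\neq 1$ and $s\neq r$. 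Putting these together: when $\gamma\neq 0$ and $r\neq 1$, $f=h$ fails to be conformal precisely when $s\in\{1,r\}$.

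Now I would run through the exceptional conditions of Theorem~B with $f=h$. Since $f(\zeta)=0$ iff $\zeta=0$, the clause ``there exists $\zeta\neq\gamma/(r-1)$ with $f(\zeta)=0$'' is equivalent to $\gamma\neq 0$. In particular, if $\gamma=0$ none of (i)--(iii) can hold, so $A$ is a Noetherian UFR, consistent with the statement. For $\gamma\neq 0$: exception~(i) (non-conformal, $r$ not a root of unity) forces $s=1$ or $s=r$ with $r$ not a root of unity, giving case~(a) or (b) respectively, with $\beta$ not a root of unity; exception~(ii) (conformal, $\langle r,s\rangle$ free of rank~$2$) is exactly case~(c), the conformality being automatic since rank~$2$ excludes $s\in\{1,r\}$; and exception~(iii) ($r=1$, $s$ not a root of unity) is a subcase of~(a), with $\beta=-s$ not a root of unity. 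Conversely, each of (a), (b), (c) combined with $\gamma\neq 0$ and $\beta$ not a root of unity falls into one of (i)--(iii); this is essentially the same calculation read backwards, and it is the step where I expect the bookkeeping to require the most care, specifically checking that (a) with $r$ not a root of unity splits between (i) and (iii) according to whether $r\neq 1$ or $r=1$.

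Finally, the UFD statement is a direct application of Theorem~A to $L(h,r,s,\gamma)$, noting that $rs=-\beta\neq 0$ is automatic from the hypothesis $\beta\neq 0$, so no further translation is required.
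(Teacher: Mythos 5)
Your proposal is correct and follows essentially the same route as the paper: identify $A(\alpha,\beta,\gamma)$ with $L(h,r,s,\gamma)$, determine exactly when $f=h$ is conformal via Lemma~\ref{L:conf:gamma0} and Propositions~\ref{P:conf:gamma0rnot1} and~\ref{P:conf:ris1}, translate the exceptions of Theorem~B into the conditions (a)--(c) using $\alpha+\beta=1\iff 1\in\{r,s\}$ and $\alpha^2+4\beta=0\iff r=s$, and invoke Theorem~A for the UFD statement. Your bookkeeping (including the observation that $\beta=-rs$ is automatically a non-root of unity in every exceptional case) matches the paper's.
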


\begin{proof}
We use the isomorphism $A(\alpha, \beta, \gamma)\simeq L(h, r, s, \gamma)$. First, by Proposition~\ref{P:conf:gamma0rnot1}, Lemma~\ref{L:conf:gamma0} and Proposition~\ref{P:conf:ris1}, we conclude that $f(h)=h$ is conformal in $L(h, r, s, \gamma)$ if and only if one of the following conditions holds:
\begin{itemize}
\item $\gamma=0$ and $r\neq s$;
\item $\gamma\neq 0$, $r\neq 1$, $s\neq 1$ and $r\neq s$;
\item $\gamma\neq 0$ and $r=1$.
\end{itemize}
Thus, we can apply Theorem~B to conclude that $A$ is a Noetherian UFR except in the cases listed below:
\begin{itemize}
\item $\gamma\neq 0$, $s=1$ and $r$ is not a root of unity;
\item $\gamma\neq 0$, $s=r$ and $r$ is not a root of unity;
\item $\gamma\neq 0$ and $\langle r, s \rangle$ is a free abelian group of rank $2$;
\item $\gamma\neq 0$, $r=1$ and $s$ is not a root of unity.
\end{itemize}
Notice that, in all of these cases, $\gamma\neq 0$ and $\beta=-rs$ is not a root of unity. Also, $\alpha+\beta=1\iff r=1$ or $s=1$, and $\alpha^{2}+4\beta=0\iff r=s$.  The first part of the theorem thus follows. The second part is a direct consequence of Theorem~A.
\end{proof}

Two down-up algebras of particular interest are the enveloping algebra of the Lie algebra $\mathfrak{sl}_{2}$ and the enveloping algebra of the $3$-dimensional Heisenberg Lie algebra, which occur as $A(2, -1, 1)$ and $A(2, -1, 0)$, respectively. Using Corollary~\ref{mainDUA}, we retrieve the well-known fact that each of these two algebras is a Noetherian UFD (see~\cite{nC74} and \cite[Prop.\ 3.1]{C84}).  

Generalized down-up algebras also include other classes of algebras, such as Smith's algebras \cite{spS90} and Rueda's algebras \cite{sR02}. In the case of Smith's algebras, the result is quite straightforward.
Let $f \in \mathbb{K}[H]$. Recall that the Smith algebra $S(f)$ is the $\mathbb{K}$-algebra generated by $A,B,H$ with relations:
$$[H,A]=A,~[H,B]=-B \mbox{ and } [A,B]=f(H).$$
It is well known that $S(f) \simeq L(f,1,1,1)$. Hence, we deduce from Theorems A and B the following result.

\begin{corollary}
Let $S(f)$ be a Smith algebra with  $f \in \mathbb{K}[H]$. Then, $S(f)$ is a Noetherian UFD.
\end{corollary}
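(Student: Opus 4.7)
The plan is to apply Theorems A and B directly to the isomorphism $S(f) \simeq L(f, 1, 1, 1)$, with parameters $r=s=\gamma=1$.

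First I would verify that $L(f,1,1,1)$ is a Noetherian UFR by inspecting the three exceptional conditions in Theorem~B. Condition~(a) requires $r$ not to be a root of unity, which fails since $r=1$. Condition~(b) requires $\langle r, s\rangle$ to be a free abelian group of rank~$2$, which fails since $\langle 1, 1\rangle$ is trivial. Condition~(c) requires $s$ not to be a root of unity, which also fails. Hence none of the exceptions can occur, regardless of the polynomial $f$, and so $L(f,1,1,1)$ is a Noetherian UFR. (If one prefers to avoid quoting Theorem~B, the same conclusion follows directly: when $f=0$ this is Proposition~\ref{P:f0:gn0:UFR}, and when $f\neq 0$ one uses that $f$ is automatically conformal in $L(f,1,1,1)$ by Proposition~\ref{P:conf:ris1}, together with Theorem~\ref{T:r1:gno:UFR}.)

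Next I would upgrade this to the UFD statement using Theorem~A. Since $r=s=1$, the subgroup $\langle r, s\rangle$ of $\K^{*}$ is the trivial group, which is vacuously torsionfree. Therefore the equivalence in Theorem~A yields that $L(f,1,1,1)$ is a Noetherian UFD, and the isomorphism $S(f)\simeq L(f,1,1,1)$ transports this property to $S(f)$.

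There is essentially no obstacle here: the result is a pure bookkeeping consequence of the main theorems, since the Smith parameters $(r,s,\gamma)=(1,1,1)$ sit comfortably outside every exceptional case identified in Theorem~B and trivially satisfy the torsionfree hypothesis of Theorem~A. The only minor care needed is to handle $f=0$ and $f\neq 0$ uniformly, which is why appealing to Theorem~B (whose statement does not presuppose $f\neq 0$) is the cleanest route.
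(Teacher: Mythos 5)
Your proposal is correct and matches the paper's own argument, which likewise deduces the corollary from the isomorphism $S(f)\simeq L(f,1,1,1)$ together with Theorems A and B; you have simply spelled out the routine verification that none of the exceptional cases of Theorem~B can occur and that $\langle 1,1\rangle$ is torsionfree.
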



\noindent
St\'ephane Launois\\ 
School of Mathematics, Statistics \& Actuarial Science,\\ 
University of Kent, Canterbury, 
Kent CT2 7NF, United Kingdom\\
E-mail: S.Launois@kent.ac.uk
 \\[10pt]
Samuel A.\ Lopes\\
Centro de Matem\'atica da Universidade do Porto,\\
Universidade do Porto,\\
Rua do Campo Alegre 687, 4169-007 Porto, Portugal\\
E-mail: slopes@fc.up.pt

\end{document}